\definecolor{OliveGreen}{rgb}{0,0.6,0}
\newcommandx{\unsure}[2][1=]{\todo[linecolor=red,backgroundcolor=red!25,bordercolor=red,#1]{#2}}
\newcommandx{\change}[2][1=]{\todo[linecolor=blue,backgroundcolor=blue!25,bordercolor=blue,#1]{#2}}
\newcommandx{\info}[2][1=]{\todo[linecolor=OliveGreen,backgroundcolor=OliveGreen!25,bordercolor=OliveGreen,#1]{#2}}
\newcommand{\centre}[1]{\begin{array}{c} #1 \end{array}}
\newtheorem{theorem}{Theorem}[section]
\newtheorem{proposition}[theorem]{Proposition}
\newtheorem{lemma}[theorem]{Lemma}
\newtheorem{corollary}[theorem]{Corollary}
\theoremstyle{definition}
\newtheorem{definition}[theorem]{Definition}
\newtheorem{example}[theorem]{Example}
\theoremstyle{remark}
\newtheorem{remark}[theorem]{Remark}
\newtheorem{notation}[theorem]{Notation}
\numberwithin{equation}{section}
\renewcommand{\to}{\longrightarrow}
\newcommand{\uhat}{\underaccent{\check}}
\newcommand{\cupr@tip}{\text{\raisebox{-0.1ex}{$\m@th\hat{}$}}}
\newcommand{\cupr}{\mathbin{\cup\cupr@}}
\newcommand{\cupr@}{%
  \mathchoice
  {\mkern-1.35mu\cupr@tip}
  {\mkern-1.35mu\cupr@tip}
  {\mkern-1.55mu\cupr@tip}
  {\mkern-1.875mu\cupr@tip}
}
\newcommand{\capr@tip}{\text{\raisebox{0.47ex}{$\m@th\uhat{}$}}}
\newcommand{\capr}{\mathbin{\capr@\cap}}
\newcommand{\capr@}{%
  \mathchoice
  {\mkern11.6mu\capr@tip\mkern-11.6mu}
  {\mkern11.4mu\capr@tip\mkern-11.4mu}
  {\mkern11.1mu\capr@tip\mkern-11.1mu}
  {\mkern10.2mu\capr@tip\mkern-10.2mu}
}
\newcommand{\capl@tip}{\text{\raisebox{0.47ex}{$\m@th\uhat{}$}}}
\newcommand{\capl}{\mathbin{\capl@\cap}}
\newcommand{\capl@}{%
  \mathchoice
  {\mkern2.1mu\capl@tip\mkern-2.1mu}
  {\mkern2.1mu\capl@tip\mkern-2.1mu}
  {\mkern2.3mu\capl@tip\mkern-2.3mu}
  {\mkern2.1mu\capl@tip\mkern-2.1mu}
}
\newcommand{\cupl@tip}{\text{\raisebox{-0.1ex}{$\m@th\hat{}$}}}
\newcommand{\cupl}{\mathbin{\cupl@\cup}}
\newcommand{\cupl@}{%
  \mathchoice
  {\mkern1.35mu\cupl@tip\mkern-1.35mu}
  {\mkern1.35mu\cupl@tip\mkern-1.35mu}
  {\mkern1.55mu\cupl@tip\mkern-1.55mu}
  {\mkern1.875mu\cupl@tip\mkern-1.875mu}
}
\DeclareFontFamily{U}{mathx}{}
\DeclareFontShape{U}{mathx}{m}{n}{ <-> mathx10 }{}
\DeclareSymbolFont{mathx}{U}{mathx}{m}{n}
\DeclareMathAccent{\widecheck}{0}{mathx}{"71}
\begin{document}

\title{The Large-Color Expansion Derived from the Universal Invariant}
\date{\today}

\author{Boudewijn Bosch}
\address{Bernouilli Institute, University of Groningen, Nijenborgh 9, 9747 AG, Groningen, The Netherlands}
\email{\href{mailto:b.j.bosch@rug.nl}{b.j.bosch@rug.nl}}




\begin{abstract}
  The colored Jones polynomial associated to a knot admits an expansion of knot invariants known as the large-color expansion or Melvin--Morton--Rozansky expansion. We will show how this expansion can be derived from the universal invariant arising from a Hopf algebra $\mathbb{D}$, as introduced by Bar-Natan and Van der Veen. We utilize a Mathematica implementation to compute the universal invariant $\mathbf{Z}_{\mathbb{D}}(\mathcal{K})$ up to a certain order for a given knot $\mathcal{K}$, allowing for experimental verification of our theoretical results.
\end{abstract}

\subjclass{57K14, 16T05, 17B37}


\maketitle



\section{Introduction}

The colored Jones polynomial $J_{\mathcal{K}}^n$ is an extension of the Jones polynomial \cite{Jones1985API}, a knot invariant in three-dimensional space. It depends on a positive integer $n$, which corresponds to the dimension of an irreducible representation of the quantum group $U_q(\mathfrak{sl}_2)$. The colored Jones polynomial is an example of a so-called \textit{quantum invariant}, which typically arise from the representation theory of quantum groups \cite{Jimbo1985AqdifferenceAO,kassel}

The Alexander polynomial is another invariant of knots in three-dimensional space \cite{Alexander1928TopologicalIO}. It is a polynomial $\Delta_{\mathcal{K}}(T)$ in one variable $T$ that has clear roots in the topology of the knot $\mathcal{K}$.

Melvin and Morton conjectured a relationship between the colored Jones polynomial $J^n_{\mathcal{K}}$ and Alexander polynomial $\Delta_{\mathcal{K}}$. This was subsequently proven by Bar-Natan and Garoufalidis \cite{bar1996melvin}. Rozansky later showed that the colored Jones polynomial admits an expansion whose zeroth-order term is equal to the reciprocal of the Alexander polynomial \cite{rozansky1997higher,rozansky1998universalr}. This expansion is known as the \textit{large-color expansion}, \textit{rational expansion}, \textit{loop expansion} or \textit{Melvin--Morton--Rozansky expansion} (see Theorem~\ref{thm:rozoverbay}). In her PhD thesis, Overbay computed explicit expressions of the first- and second-order terms ($P_1^{\mathcal{K}}$ and $P_2^{\mathcal{K}}$ resp.) for knots up to 10 crossings \cite{overbay2013perturbative}.

      In recent years, there has been a resurgence of interest in the large-color expansion, due to the development of the Gukov and Manolescu series $F_K(x,q)$, for which an expansion in terms of the large-color expansion has been conjectured \cite{park2020large,gukov2021two}. Moreover, the Melvin--Morton expansion has been used to study the  similarly defined \textit{unified invariant} \cite{martel2024unified}.

Recently, Bar-Natan and Van der Veen studied the Lawrence's universal invariant \cite{lawrenceuniversal} for a particular choice of ribbon Hopf algebra $\mathbb{D}$. This knot invariant $\mathbf{Z}_{\mathbb{D}}$ dominates the universal quantum $\mathfrak{sl}_2$ invariant and hence all colored Jones polynomials \cite{bar2021perturbed}. A fundamental part of their construction was to consider an algebra $\mathbb{D}$ over $\mathbb{Q}[\epsilon] \llbracket h \rrbracket$, instead of the usual $\mathbb{Q} \llbracket h \rrbracket$ (see Definition~\ref{def:D}). The new variable $\epsilon$ does not play the same role as $h$ in the quantized enveloping algebras of Drinfeld--Jimbo type. One significant distinction is that $\epsilon$ is absent from the definition of the co-product.

By considering generating functions in terms of perturbed Gaussians, they were able to expand $\mathbf{Z}_{\mathbb{D}}(\mathcal{K})$ in orders of $\epsilon$ with terms $\rho_{i,j}^{\mathcal{K}}$ $(i,j \geq 0)$, q.v.~Theorem~\ref{thm:barnatanveen}. This expansion closely resembles the large-color expansion of the colored Jones polynomial. Because the knot invariant $\mathbf{Z}_{\mathbb{D}}(\mathcal{K})$ originates from the universal invariant --- as opposed to the more common Reshetikhin--Turaev invariant --- its topological interpretation is tractable due to its good behavior under tangle operations, strand doubling and reversal. Furthermore, a Mathematica implementation has been developed to compute $\mathbf{Z}_{\mathbb{D}}(\mathcal{K})$ efficiently.

Based on both the theory of quantum invariants, the Kontsevich invariant and experiments, the invariants $\rho^{\mathcal{K}}_{k,0}$ were conjectured to be equivalent to the higher-order knot invariants of the large-color expansion as introduced by Rozansky \cite{thesisbecerra}. In this paper, we will prove the following:
\begin{theorem}
The polynomials $\rho^{\mathcal{K}}_{1,0}$ and $P^{\mathcal{K}}_1$ are equal.
\end{theorem}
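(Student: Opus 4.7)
The plan is to compare both invariants as coefficients in a single large-color expansion of $J^n_{\mathcal{K}}$, obtained in two different ways: directly from Rozansky's Theorem~\ref{thm:rozoverbay}, and via the specialization of $\mathbf{Z}_{\mathbb{D}}(\mathcal{K})$ that recovers the colored Jones polynomial. Concretely, I would begin by invoking the domination result of \cite{bar2021perturbed}, which provides a procedure taking $\mathbf{Z}_{\mathbb{D}}(\mathcal{K}) \in \mathbb{D}$ to the universal $\mathfrak{sl}_2$ invariant and hence, after quantum trace in the $n$-dimensional irreducible representation, to $J^n_{\mathcal{K}}$. Under this procedure the formal variable $\epsilon$ must be identified with an explicit combination of $h$ and the color parameter coming from the highest-weight evaluation of the Cartan generator; tracking this identification turns the $\epsilon$-expansion of Theorem~\ref{thm:barnatanveen} into a bivariate expansion in $h$ and $nh$, which is exactly the form that Rozansky's rational expansion organises.

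The second step is to match coefficients order by order. At order $\epsilon^0$ the identification reduces to the Melvin--Morton theorem of \cite{bar1996melvin}: the leading piece of $\mathbf{Z}_{\mathbb{D}}(\mathcal{K})$ recovers $1/\Delta_{\mathcal{K}}(q^n)$, so $\rho^{\mathcal{K}}_{0,0}$ is pinned down to the inverse Alexander polynomial. At the next order in $\epsilon$, only $\rho^{\mathcal{K}}_{1,0}$ carries a genuine $nh$-dependence that is not already accounted for by re-expanding the zeroth-order term (contributions from $\rho^{\mathcal{K}}_{0,j}$ with $j\ge 1$ or from higher powers of $h$ in the specialization of $\rho^{\mathcal{K}}_{0,0}$ live in different slots). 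Comparing the resulting $nh$-term with the $h \cdot P^{\mathcal{K}}_1(q^n)/\Delta_{\mathcal{K}}(q^n)^3$ contribution in Rozansky's expansion then forces $\rho^{\mathcal{K}}_{1,0} = P^{\mathcal{K}}_1$ as rational functions of $X = q^n$, after clearing the common denominator prescribed by the Alexander polynomial.

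The main obstacle will be the precise identification of $\epsilon$ with a function of $h$ and $n$ under the specialization map, and the resulting bookkeeping of which monomials in $(\epsilon, h)$ land in the $nh$-slot. A second delicate point is disentangling the genuine $\rho^{\mathcal{K}}_{1,0}$ contribution from the Taylor tail of $\rho^{\mathcal{K}}_{0,0}$ under the substitution: since $\Delta_{\mathcal{K}}^{-1}$ carries higher-order Taylor corrections in $h$, one must isolate the part that truly originates from order $\epsilon$. A closely related issue is reconciling framing, orientation, and normalisation conventions between \cite{bar2021perturbed} and \cite{rozansky1998universalr, overbay2013perturbative}, since a framing discrepancy would shift both sides by a writhe-dependent correction that must be checked to be consistent.

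Once the theoretical identification is in place, I would provide an independent experimental check by running the existing Mathematica implementation of $\mathbf{Z}_{\mathbb{D}}$ on a sample of knots (the trefoil, the figure-eight, and a handful of $10$-crossing knots) and comparing the computed $\rho^{\mathcal{K}}_{1,0}$ with Overbay's tabulated values of $P^{\mathcal{K}}_1$ from \cite{overbay2013perturbative}; any discrepancy would diagnose a missed normalisation factor in the specialization, and agreement on this sample would give strong corroboration of the formal argument.
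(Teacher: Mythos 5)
Your overall strategy is the same as the paper's: specialize $\mathbf{Z}_{\mathbb{D}}(\mathcal{K})$ so that it computes the colored Jones polynomial (Corollary~\ref{cor:JphiD}), expand in $h$, and compare coefficients with Theorem~\ref{thm:rozoverbay}. However, the two issues you defer as ``obstacles'' are precisely the substance of the proof, and your proposed way around them would not go through. First, $\epsilon$ is \emph{not} identified with a combination of $h$ and the color parameter. The actual mechanism (Proposition~\ref{pro:DDiso}) is that $\phi(h)=2h\epsilon^{-1}$, so each prefactor $\epsilon^k h^k$ in Theorem~\ref{thm:barnatanveen} becomes $(2h)^k$, while the remaining $h^j\mathbf{W}^j$ becomes $(2h\epsilon^{-1})^j\phi(\mathbf{W})^j$ and the eigenvalue $\phi(\mathbf{W})m_0=\tfrac{\sigma\epsilon}{4h}(1-q^{-2\sigma\lambda})m_0$ of Proposition~\ref{pro:phiwphit} supplies a compensating factor of $\epsilon/h$; the color enters only through this eigenvalue and through $\phi(\mathbf{T})$, never through $\epsilon$ itself. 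Setting up the substitution the way you describe risks mis-assigning powers of $h$ from the start.

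Second, and more seriously, your claim that at order $\epsilon$ ``only $\rho^{\mathcal{K}}_{1,0}$ carries a genuine $nh$-dependence'' because the other contributions ``live in different slots'' is false as a pure slot-counting argument. The term $\rho^{\mathcal{K}}_{1,1}\mathbf{W}$ lands in exactly the same slot: using $\rho^{\mathcal{K}}_{1,1}(T)=\tfrac{2T}{1-T}\Delta'_{\mathcal{K}}(T)$ and the $\mathbf{W}$-eigenvalue, it contributes $2h\,\sigma T\,\Delta'_{\mathcal{K}}(T)/\Delta_{\mathcal{K}}(T)^2$ at order $h^1$ with nontrivial dependence on the color. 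This contribution does not vanish; rather, with $\sigma=-1$ and $\lambda=n-1$ it cancels exactly against the Taylor correction produced when the natural variable $T=q^{2\lambda}=q^{2n-2}$ is converted into Rozansky's variable $q^{2n}$, via $1/\Delta_{\mathcal{K}}(q^{2n-2})=1/\Delta_{\mathcal{K}}(q^{2n})+2q^{2n}\Delta'_{\mathcal{K}}(q^{2n})h/\Delta_{\mathcal{K}}(q^{2n})^{2}+\dots$ One also needs $\rho^{\mathcal{K}}_{1,2}=0$ to discard the $\mathbf{W}^2$ term. Only after this cancellation does the coefficient of $h/\Delta_{\mathcal{K}}^3$ reduce to $2\rho^{\mathcal{K}}_{1,0}(q^{2n})$, which matches $2P^{\mathcal{K}}_1(q^{2n})$ since $(q^2-1)=2h+O(h^2)$. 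Your proposal acknowledges that the Taylor tail must be disentangled but offers no mechanism for doing so; without the explicit $\rho_{1,1}$ formula and the $\mathbf{W}$-eigenvalue this step cannot be completed. The framing and mirror-image concerns you raise are legitimate but secondary; they are resolved by the choice $\sigma=-1$ and Proposition~\ref{pro:mirrorimage}. The concluding experimental check you propose mirrors Section~\ref{sec:exver} and is fine as corroboration, but it is not part of the proof.
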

Moreover, we show that the large-color expansion can be obtained from $\mathbf{Z}_{\mathbb{D}}(\mathcal{K})$ for any order. Using the Mathematica implementation to compute $\mathbf{Z}_{\mathbb{D}}(\mathcal{K})$ up to a specified order of $\epsilon$, we provide experimental verification of this result.

\subsection{Organization of the paper}
In Section~\ref{sec:univinv} of this paper, we recall the definition of the universal invariant associated to rotational tangle diagrams. We introduce the concept of ``twisting'' the universal $R$-matrix, which allows for different choices of universal $R$-matrices. It is proven that the universal invariant of tangles varies through conjugation based on whether the standard $R$-matrix or its twisted counterpart is chosen.

In Section~\ref{sec:perturbedknot} we provide a brief review of $U_h(\mathfrak{sl}_2)$, together with an elaboration of its Verma modules. We proceed by showing that the universal $R$-matrix derived from the Drinfeld double $\mathfrak{D}_{\mathfrak{sl}_2}$ can be ``twisted'' to the universal $R$-matrix of $U_h(\mathfrak{sl}_2)$. By establishing an isomorphism between $\mathfrak{D}_{\mathfrak{sl}_2}$ and the algebra $\mathbb{D}$, an explicit relation between $\rho_{1,0}^{\mathcal{K}}$ and $P_1^{\mathcal{K}}$ can then be inferred.

In the final Section~\ref{sec:exver}, the Mathematica program by Bar-Natan and Van der Veen is used to experimentally verify the relation between the knot invariant $\mathbf{Z}_{\mathbb{D}}(\mathcal{K})$ and the large-color expansion of the colored Jones polynomial.

\subsection*{Acknowledgments} The author would like to thank Roland van der Veen and Jorge Becerra for many helpful discussions and suggestions on the content of this paper.

\section{The universal invariant and twisting}
\label{sec:univinv}

In this section, we will examine oriented and framed tangles which will be represented as rotational, mainly following \cite{becerra2024bar}. Additionally, we will define the universal invariant and discuss the operation of ``twisting.'' The universal invariant is shown to remain unchanged modulo conjugation regardless of whether the standard universal $R$-matrix or its twisted counterpart is chosen.
\subsection{Tangles}

A \textit{(oriented) tangle} is a compact oriented 1-manifold properly embedded in $I^{3}$ such that the boundary of the embedded 1-manifold consists of distinct points in $\{0\} \times I \times \{0,1\}$.  A tangle is said to be \textit{framed} when it is equipped with a non-singular normal vector field that equals $(0,-1,0)$ at all the endpoints. Two (framed) tangles are referred to as \textit{isotopic} if they can be transformed into one another through an isotopy of $I^{3}$ while preserving their boundary points.

Let $\mathrm{Mon}(+,-)$ be the free monoid on the set $\{+,-\}$.
To any tangle we associate a source $s(\gamma) \in \mathrm{Mon}(\pm):= \mathrm{Mon}(+,-)$ which is a word in $+$ and $-$, that should be read along the oriented interval $I \times \{0\} \times \{1\}$, where we associate the sign $+$ (resp. $-$) to any boundary point of $\gamma$ when the strand at that point is downwards (resp. upwards) oriented. The target $t(\gamma) \in \mathrm{Mon}(\pm)$ of $\gamma$ is defined in a similar way.

We define the strict monoidal \textit{category $\mathcal{T}$ of tangles} by setting its objects to $\mathrm{Mon}(\pm)$ and its morphisms $\mathrm{Hom}_{\mathcal{T}}(s,t)$ to isotopy classes of tangles $\gamma$ with source $s=s(\gamma)$ and target $t = t(\gamma)$. The composition $\gamma \circ \gamma'$ of tangles  $\gamma$ and $\gamma'$ with $t(\gamma')=s(\gamma)$ is obtained by stacking $\gamma'$ on top of $\gamma$. The identity is the trivial tangle with vertical strands, trivial framing and a compatible orientation. The tensor product in $\mathcal{T}$ is defined by the juxtaposition of tangles.

The category of tangles $\mathcal{T}$ contains a subcategory of upwards tangles with objects $\mathrm{Mon(+)} \subset \mathrm{Mon}(\pm)$ and morphisms consting of upwards oriented open tangles. An upwards tangle $\mathcal{K} \in \mathrm{Hom}(+,+)$ is referred to as a \textit{long knot}. A standard closure process exists, resulting in a one-to-one relationship between the subcategories of long knots and closed knots. Studying long knots instead of closed knots has algebraic benefits, which shall be utilized in the upcoming sections.

Instead of working with the conventional generators of morphisms in $\mathcal{T}$ using cups and caps, we will introduce an alternative approach involving a variant of Morse diagrams of tangles. From this perspective, it will be necessary to keep track of the rotational number of the strands within the diagram. A tangle diagram $D$ corresponding to a tangle $\gamma$ is said to be a \textit{rotational tangle diagram} if
\begin{enumerate}[(1)]
\item $\gamma$ is an upwards tangle.
\item all crossings in $D$ are upwards oriented, and all maxima and minima appear in pairs in the following two forms:
\end{enumerate}

\begin{equation*}
\centre{
\centering
\includegraphics[width=0.4\textwidth]{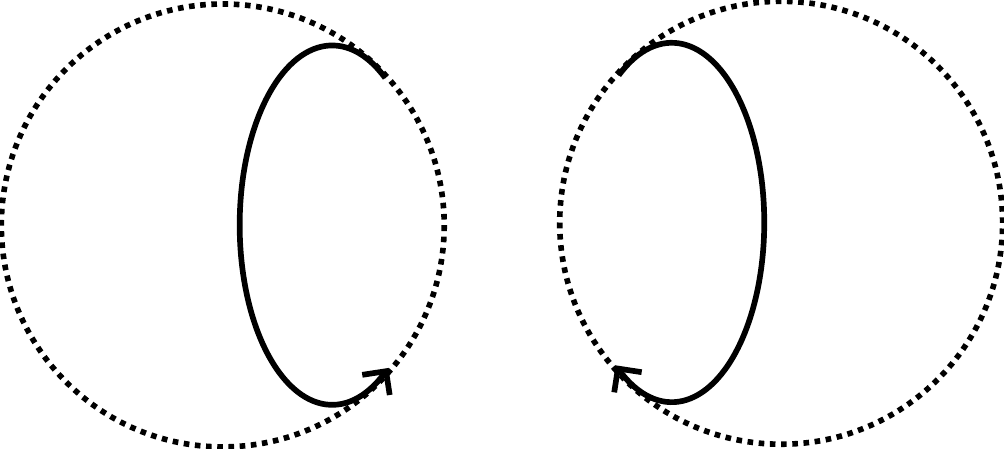}}
\end{equation*}
The rotational diagrams are regarded up to planar isotopy that preserve all maxima and minima. Any tangle diagram can be transformed into a rotational tangle diagram through the application of local planar isotopies, as has been shown in~\cite{becerra2024bar}. In fact, rotational tangle diagrams are related by \textit{rotational Reidemeister moves}, for which we also refer to~\cite{becerra2024bar}.
\begin{proposition}
  \[
\frac{\left\{\begin{array}{c}\textup { upwards tangles } \\ \textup { in }\left(I^{3}\right)\end{array}\right\}}{\textup { isotopy }} =\frac{\left\{\begin{array}{c}\textup { upwards tangle } \\ \textup { diagrams in }I^{2} \end{array}\right\}}{\begin{array}{c}\textup { planar isotopy and} \\ \textup { Reidemeister moves}\end{array}}
=\frac{\left\{\begin{array}{c}\textup {rotational tangles} \\ \textup { in }I^{2} \end{array}\right\}}{\begin{array}{c}\textup { planar isotopy and} \\ \textup { Reidemeister moves}\end{array}}
\]
\end{proposition}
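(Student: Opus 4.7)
The plan is to establish the two equalities separately, with both being framed/oriented analogues of classical Reidemeister-type theorems. The first equality is the standard (framed, oriented) Reidemeister theorem applied to upwards tangles, and the second is the normalization to rotational form carried out in the reference \cite{becerra2024bar}.

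For the first equality, I would argue as follows. Given an upwards tangle $\gamma\subset I^3$, a generic projection onto $I\times\{0\}\times I$ produces an upwards tangle diagram $D(\gamma)$ whose only singularities are transverse double points; the framing, being non-singular and equal to $(0,-1,0)$ at the endpoints, is recorded by blackboard framing after a small isotopy. Conversely, any upwards tangle diagram lifts to a unique (up to isotopy) upwards tangle in $I^3$ with blackboard framing. To show the map is well-defined on equivalence classes, take an ambient isotopy $\gamma_t$ between two upwards tangles with fixed boundary; generically, the one-parameter family of projections $D(\gamma_t)$ passes through finitely many codimension-one degenerations (triple points, tangencies, vertical tangent lines), each of which is realized by one of the oriented Reidemeister moves $\Omega_{1'},\Omega_2,\Omega_3$ adapted to the framing, together with planar isotopy. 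This is classical, cf.\ the tangle-version proofs in the literature on ribbon categories.

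For the second equality, I would follow the normalization argument of \cite{becerra2024bar}. Starting from an arbitrary upwards tangle diagram $D$, apply local planar isotopies to isolate every local extremum (maximum or minimum of the height function) and every crossing, so that each horizontal slice of $I^2$ contains at most one such event. An extremum of an upwards strand is necessarily a maximum immediately followed by a minimum, or the reverse, forming one of the two standard cup/cap pairings displayed in the excerpt; by pushing the strand with a local planar isotopy one can always arrange the pair to take that standard rotational shape, introducing at most a compensating rotation that is absorbed into the framing count. This gives a rotational tangle diagram equivalent to $D$ modulo planar isotopy. For the converse direction within diagrams, any two rotational diagrams of isotopic tangles differ by a sequence of ordinary Reidemeister moves (by the first equality) which can then be rewritten into the rotational Reidemeister moves of \cite{becerra2024bar} by inserting and cancelling cup/cap pairs.

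The main obstacle is the careful bookkeeping of the rotation number of strands when converting ordinary Reidemeister moves into rotational ones: the first Reidemeister move must be replaced by a framed version that tracks the rotational contribution of the added cup/cap pair, so that blackboard framing is preserved. Once this bookkeeping is in place, the remaining steps (generic projection, local normalization, reduction of moves) are routine and the three quotients coincide.
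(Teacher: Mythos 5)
The paper does not prove this proposition at all: it simply cites \cite[Cor.~2.3]{becerra2024bar}, so your sketch is doing genuinely more work than the source text. Your outline of the two equalities is the standard one and is correct in spirit: the first equality is the framed, oriented Reidemeister theorem for tangles (generic projection, codimension-one degenerations in a generic isotopy realized by the oriented framed moves), and the second is the normalization of an arbitrary upwards diagram into rotational form by isolating extrema into cup/cap pairs and tracking rotation numbers. The one place where your sketch is materially thinner than what a complete proof requires is the coherence of the normalization in the second equality. It is not enough to observe that each ordinary Reidemeister move ``can be rewritten'' into rotational moves after inserting and cancelling cup/cap pairs: the normalization of a diagram involves choices (which way to rotate a crossing into upward position, where to place the compensating curls), and one must check both that different normalizations of the same diagram are related by rotational Reidemeister moves and planar isotopy, and that the normalizations of two diagrams differing by a single ordinary move are so related. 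That two-part well-definedness check is precisely the content of the cited corollary in \cite{becerra2024bar}, and your phrase about ``careful bookkeeping of the rotation number'' gestures at it without carrying it out. As a proof sketch standing in for a citation this is acceptable; as a self-contained proof it would need that verification made explicit.
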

\begin{proof}
See \cite[Cor. 2.3]{becerra2024bar}.
\end{proof}

\subsection{The universal invariant}
\label{sec:theunivinvar}
Let $(A,R,\kappa)$ be a topological ribbon Hopf algebra with unit $1 \in A$ and balancing element $\kappa$. The universal $R$-matrix is given by $R = \sum_i \alpha_i \otimes \beta_i$ with its inverse $R^{-1} = \sum_i \bar{\alpha}_i \otimes \bar{\beta}_i$.

In the following we restrict to the study of open upwards tangles with ordered tangle components. In order to define the universal invariant, an $n$-component open tangle $L= L_1 \cup \dots \cup L_n$ has to be broken apart in terms of cups, caps and crossings. On each strand of these generators, beads are placed, representing elements elements of the ribbon Hopf algebra $A$. In particular, one places ``alpha'' on the overstrand, and ``beta'' on the understrand. The beads are labelled in accordance with the figure below:
\begin{equation}
  \label{eq:beads}
\centre{
\centering
\includegraphics[scale=0.7]{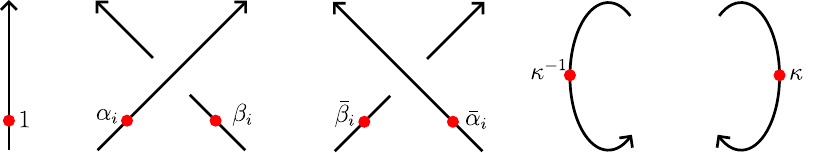}}
\end{equation}
Let be $Z_A(L)_{(k)}$ the product of the labels of the beads from right to left along the orientation of the strand $k$. The \textit{universal invariant} of $L$ associated to the ribbon Hopf algebra $A$ is
\[
  Z_{A}(L):=\sum Z_{A}(L)_{(1)} \otimes \cdots \otimes Z_{A}(L)_{(n)} \in A \otimes \cdots \otimes A
  \]
  where the sum is taken over all the subindices in $R^{\pm 1}$.

  As the name suggests, the universal invariant is preserved under Reidemeister moves and is therefore an invariant of tangles \cite[Thm. 4.5]{ohtsuki2002quantum}.
\begin{example}
  Let $\mathcal{K}$ be the trefoil as illustrated below. Note that we have already decorated the crossings and closures in accordance with the prescription above.
\begin{equation*}
\centre{
\centering
\includegraphics[scale=0.8]{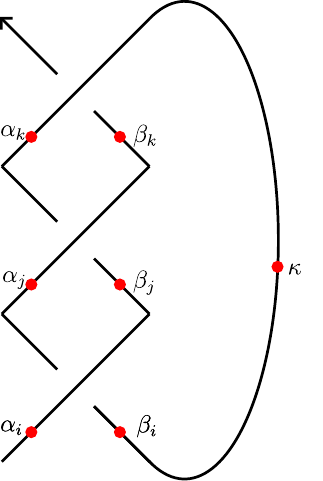}}
\end{equation*}
The universal invariant, associated to the trefoil, is given by
\[
  Z(\mathcal{K})= \sum_{i,j,k}  \beta_k\alpha_j \beta_i\kappa \alpha_k\beta_{j }\alpha_i.
  \]
\end{example}

The universal invariant recovers the Reshetikhin--Turaev invariant. As to see why this is the case,
consider the ribbon category of upwards tangles $\mathcal{T}^{\mathrm{up}}_{\mathrm{Mod_A}}$ by coloring the tangle components with elements of the space of finite-rank left $A$-modules denoted by $\mathrm{Mod}_{A}$. Recall that the Reshetikhin--Turaev functor $RT\colon \mathcal{T}^{\mathrm{up}}_{\mathrm{Mod}_A} \to \mathrm{Mod}_A$ induces an invariant of upwards tangles. Indeed, label each tangle component~$k$ with $V_{k} \in \mathrm{Mod}_A$, together with an action of $A$ given by $\rho_{V_k}$. Write $\sigma$ for the permutation induced by $L$. We have
\begin{align*}
  &R T_{L}\left(V_{1}, \ldots, V_{n}\right) \\
  &=
\sigma_{*} \circ\left(\rho_{V_{1}} \otimes \cdots \otimes \rho_{V_{n}}\right)\left(Z_{A}(L)\right) \in \operatorname{Hom}_{A}\left(V_{1} \otimes \cdots \otimes V_{n}, V_{\sigma(1)} \otimes \cdots \otimes V_{\sigma(n)}\right)
\end{align*}
where $\sigma_{*} \colon V_{1} \otimes \cdots \otimes V_{n} \longrightarrow V_{\sigma(1)} \otimes \cdots \otimes V_{\sigma(n)}$ is induced by $\sigma$, permuting the vector spaces in the tensor factors. Now, as any link  is the closure of a string link, the quantum trace $\mathrm{tr}_q^V(x):= \mathrm{tr}^V(\kappa x)$ relates the universal invariant of $L$ with its closure $\mathrm{cl}(L)$ via the formula
\[
  R T_{\mathrm{cl}(L)}\left( V_{1}, \ldots, V_{n}\right)=\left(\operatorname{tr}_{q}^{V_{1}} \otimes \cdots \otimes \operatorname{tr}_{q}^{V_{n}}\right)\left(Z_{A}(L)\right).
  \]
We note that for modules of infinite rank, it would be impossible to take the trace in the manner described above.

\subsection{XC-algebras}
To construct a knot invariant from the triple $(A, R, \kappa)$ using the outlined procedure, it is not necessary to utilize the full structure of a ribbon Hopf algebra. Instead, Becerra showed  in \cite{becerra2025refinedfunctorialuniversaltangle} that it suffices to consider an algebra structure on $A$ along with specific relations between the elements $R$ and $\kappa$ that ensure the invariance of $Z_A(\mathcal{K})$ under changes of the knot diagram that do not alter the isotopy type of the knot $\mathcal{K}$ it represents. This more general structure, given by the triple $(A, R, \kappa)$, is referred to as an \textit{XC-algebra}.

\begin{definition}
Let $ k $ be a commutative ring with a unit, and let $(A, \mu, 1) $ be a $ k $-algebra. An \emph{$XC $-structure} on $ A $ consists of two invertible elements,
\[
    R \in A \otimes A, \qquad \kappa \in A,
\]
called the \emph{universal $R $-matrix} and the \emph{balancing element}, respectively, which must satisfy the following conditions:
\begin{enumerate}
    \item\label{item:1} $ R^{\pm 1} = (\kappa \otimes \kappa) \cdot R^{\pm 1} \cdot (\kappa^{-1} \otimes \kappa^{-1}) $,
    \item $\mu^{[3]}(R_{31} \cdot \kappa_2) = \mu^{[3]}(R_{13} \cdot \kappa_2^{-1}) $,
    \item $ 1 \otimes \kappa^{-1}= (\mu \otimes \mu^{[3]})(R_{15} \cdot R_{23}^{-1} \cdot \kappa_4^{-1}) $,
    \item $ \kappa \otimes 1 = (\mu^{[3]} \otimes \mu)(R_{15}^{-1} \cdot R_{34} \cdot \kappa_2) $,
    \item $R_{12} R_{13} R_{23} = R_{23} R_{13} R_{12} $,
\end{enumerate}
where $ \mu^{[3]} $ denotes the three-fold multiplication map. For indices $1 \leq i, j \leq n $ with $ i \neq j $, we define
\[
    R_{ij} := \begin{cases}
        (1^{\otimes (i-1)} \otimes \mathrm{Id} \otimes 1^{\otimes (j-i-1)} \otimes \mathrm{Id} \otimes 1^{\otimes (n-j)})(R^{\pm 1}), & i > j, \\
        (1^{\otimes (j-1)} \otimes \mathrm{Id} \otimes 1^{\otimes (i-j-1)} \otimes \mathrm{Id} \otimes 1^{\otimes (n-i)})(\mathrm{flip}_{A,A}R^{\pm 1}), & j > i.
    \end{cases}
\]
Similarly, we write $\kappa_i^{\pm 1} = (1^{\otimes (i-1)} \otimes \mathrm{Id} \otimes 1^{\otimes (n-i)})(\kappa^{\pm 1}) $.

A triple $ (A, R, \kappa) $ consisting of a $k $-algebra equipped with an $ XC $-structure is called an \emph{$XC $-algebra}.
\end{definition}
\begin{proposition}
Every ribbon Hopf algebra $(A,R, \kappa)$ is an $XC$-algebra.
\end{proposition}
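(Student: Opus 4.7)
My plan is to verify each of the four XC-axioms (1)--(4) in turn, invoking only standard identities available in any ribbon Hopf algebra $(A,R,\kappa)$: the quasi-triangularity relations $(\Delta\otimes\id)(R)=R_{13}R_{23}$ and $(\id\otimes\Delta)(R)=R_{13}R_{12}$, the intertwining relation $R\Delta(a)=\Delta^{op}(a)R$, the antipode--$R$-matrix compatibilities $(S\otimes\id)(R)=R^{-1}$ and $(S\otimes S)(R)=R$, and the defining properties of the balancing element, namely that $\kappa$ is grouplike and implements the square of the antipode, so that $\kappa a \kappa^{-1}=S^2(a)$ and equivalently $a\kappa=\kappa S^{-2}(a)$ for all $a\in A$.

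Axiom (4) is the quantum Yang--Baxter equation and follows from the classical three-line computation
\[
R_{12}R_{13}R_{23} = R_{12}(\Delta\otimes\id)(R) = (\Delta^{op}\otimes\id)(R)\cdot R_{12} = R_{23}R_{13}R_{12},
\]
using the intertwining relation in the middle step. Axiom (1) rearranges to $(\kappa\otimes\kappa)R=R(\kappa\otimes\kappa)$, which is immediate from $R\Delta(\kappa)=\Delta^{op}(\kappa)R$ together with $\Delta(\kappa)=\Delta^{op}(\kappa)=\kappa\otimes\kappa$.

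Axiom (2), after applying $\mu^{[3]}$, reduces to the scalar identity $\sum_i\alpha_i\kappa\beta_i=\sum_i\beta_i\kappa^{-1}\alpha_i$ in $A$. I would prove it by pulling $\kappa$ outside on the left via the pivotal relation $\alpha\kappa=\kappa S^{-2}(\alpha)$ and then recognizing both sides in terms of the Drinfeld element $u=\sum_i S(\beta_i)\alpha_i$, using the standard identities $S(u)=u^{-1}$ and $\kappa=u\theta^{-1}$ with the ribbon element $\theta$ central. Axiom (3), after unpacking the five tensor factors and applying $\mu^{[3]}\otimes\mu$, reduces to
\[
\sum_{i,j}\alpha_i\kappa\bar\alpha_j \otimes \bar\beta_j\beta_i = \kappa\otimes 1 \quad\text{in }A\otimes A.
\]
I would attack this by pulling $\kappa$ out on the left in slot~1 via $\alpha_i\kappa=\kappa S^{-2}(\alpha_i)$, reducing the problem to showing that $\sum_{i,j}S^{-2}(\alpha_i)\bar\alpha_j\otimes\bar\beta_j\beta_i=1\otimes 1$; this is then a rearrangement of $R^{-1}R=1\otimes 1$, combined with $(S\otimes S)(R)=R$ to reconcile the antipode powers in slot~1.

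The main obstacle will be axiom (3): unpacking the five-fold tensor slots and realigning the resulting product of $R$- and $R^{-1}$-summands against $R^{-1}R=1\otimes 1$ (rather than $RR^{-1}=1\otimes 1$, which would give the wrong order of factors in slot~2) requires delicate bookkeeping and repeated use of the antipode--$R$-matrix compatibilities. The topological interpretation, namely that axiom (3) encodes a rotational Reidemeister-type move with a $\kappa$-bead trapped between a positive and a negative crossing, provides the guiding sanity check throughout.
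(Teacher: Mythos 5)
The paper does not actually prove this proposition; it cites Becerra [Prop.~4.4] and moves on, so your direct verification is a genuinely different and more self-contained route. Your treatment of axioms (1) and (4) is complete and correct: (1) is the intertwining relation $R\Delta(\kappa)=\Delta^{op}(\kappa)R$ applied to the grouplike element $\kappa$, and (4) is the standard derivation of the quantum Yang--Baxter equation. Your reductions of axioms (2) and (3) to $\sum_i\alpha_i\kappa\beta_i=\sum_i\beta_i\kappa^{-1}\alpha_i$ and $\sum_{i,j}\alpha_i\kappa\bar\alpha_j\otimes\bar\beta_j\beta_i=\kappa\otimes 1$ are also correct, and these are exactly the algebraic identities underlying invariance of the universal invariant under the curl and zig-zag moves, so they do hold in any ribbon Hopf algebra. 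What your approach buys is an actual argument where the paper has only a pointer; what the citation buys the paper is that Becerra has already done the bookkeeping you defer.

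Two caveats on the deferred parts. First, the identity $S(u)=u^{-1}$ you invoke for axiom (2) is false in general: one has $uS(u)=v^{2}$ with $v$ the (central) ribbon element, so $S(u)=u^{-1}$ would force $v^{2}=1$, which already fails for $U_h(\mathfrak{sl}_2)$. The correct ingredients are $\kappa=uv^{-1}$ and $\kappa^{-1}=S(u)v^{-1}$, from which the desired identity follows via the pivotal relation $a\kappa=\kappa S^{-2}(a)$ and the standard expressions of $u$ and $S(u)$ as contractions of $R$. Second, for axiom (3) the expression $\sum_{i,j}S^{-2}(\alpha_i)\bar\alpha_j\otimes\bar\beta_j\beta_i$ has crossed index order ($i$ before $j$ in slot one, $j$ before $i$ in slot two), so it is not literally a rearrangement of $R^{-1}R=1\otimes 1$: one must first rewrite $R^{-1}$ as $(S\otimes\mathrm{Id})(R)$ or $(\mathrm{Id}\otimes S^{-1})(R)$ and then apply an antipode to one tensor slot to realign the multiplication orders before the inverse relation can be used. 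You flag this as bookkeeping, which is fair, but as written axioms (2) and (3) are a plan rather than a proof, and the plan for (2) rests on one identity that needs to be replaced.
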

\begin{proof}
See \cite[Prop. 4.4]{becerra2025refinedfunctorialuniversaltangle}.
\end{proof}
With the XC-structure in place, we note that the universal invariant is constructed in a manner similar to the description of the previous section, as a consequence of \cite[Thm. 3.10]{becerra2025refinedfunctorialuniversaltangle}. This result will be used to define a ``twisted'' universal invariant.
\subsection{The twisted $R$-matrix}
\label{sec:twisted}
Let $(A,R=\sum_i \alpha_i \otimes \beta_i,\kappa)$ be an $XC$-algebra. It is possible to define a new \textit{twisted} $XC$-algebra with universal $R$-matrix $\check{R}$ that gives rise to the same universal invariant of long knots and satisfies Yang-Baxter equation, as shall be shown in this section. To define this new universal $R$-matrix we introduce the following.
\begin{definition}
  \label{def:conjugating}
  An invertible element $\varphi \in A$ is called a \textit{twisting element} if it satisfies the conditions
  \begin{enumerate}[i)]
    \item $[\varphi,\kappa] = 0$;
    \item  $[\varphi \otimes \varphi, R] = 0$.
  \end{enumerate}
\end{definition}
It is often the case that the balancing element $\kappa$ is itself a twisting element. However, as we shall see, the twisting element of choice will be a slightly deformed version of $\kappa$. For example, in the case of $U_h(\mathfrak{sl}_2)$ over $\mathbb{Q}\llbracket h \rrbracket$, we have $\kappa = \mathrm{Exp}(h H)$, while choosing $\varphi = \mathrm{Exp}(h \mu H)$, for some $\mu \in \mathbb{Q}$.
\begin{definition}
Given a twisting element $\varphi$ and a universal $R$-matrix $R$, the \textit{twisted $R$-matrix} is defined as
\[
\check{R} = (1 \otimes \varphi^{-1}) R  (\varphi \otimes 1)
\]
Alternatively, we denote
\[
  \check{R}_{ij} = \varphi_j^{-1} R_{ij} \varphi_i
 \]
We say that $\check{R}$ is related to $R$ by \textit{twisting}.
\end{definition}
\begin{lemma}
  \label{lem:phiidentity}
$\varphi_j^{-1}R_{ij}\varphi_i = \varphi_i R_{ij} \varphi_j^{-1}$
\end{lemma}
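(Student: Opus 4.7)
The plan is to reduce the claimed identity to the defining properties of a twisting element from Definition~\ref{def:conjugating}, together with the trivial observation that elements living in disjoint tensor factors commute. Since $i\neq j$, the elements $\varphi_i$ and $\varphi_j$ act on different slots of the relevant tensor product and hence commute; this already handles the ``outer'' reshuffling. The nontrivial content is the commutation of $R_{ij}$ past $\varphi_i\varphi_j$, which will come directly from condition (ii) of Definition~\ref{def:conjugating}.

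The key steps I would carry out are as follows. First, I would rewrite condition $[\varphi\otimes\varphi,R]=0$ in indexed form, establishing that $(\varphi_i\varphi_j)R_{ij}=R_{ij}(\varphi_i\varphi_j)$; here one must briefly check that the identity survives the flip case $j>i$, which it does because applying $\mathrm{flip}_{A,A}$ to both sides of $(\varphi\otimes\varphi)R=R(\varphi\otimes\varphi)$ yields $(\varphi\otimes\varphi)(\mathrm{flip}_{A,A}R)=(\mathrm{flip}_{A,A}R)(\varphi\otimes\varphi)$. Second, I would rearrange this to $R_{ij}=\varphi_i\varphi_j R_{ij}\varphi_i^{-1}\varphi_j^{-1}$, substitute into the left-hand side of the lemma, and then push the isolated $\varphi_j^{-1}$ and $\varphi_i$ through using the commutativity $\varphi_i\varphi_j=\varphi_j\varphi_i$ to collapse the expression to $\varphi_i R_{ij}\varphi_j^{-1}$.

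The main obstacle is essentially bookkeeping: making sure the placement of $\varphi$ in the two tensor slots is correctly tracked in both the $i<j$ and the $i>j$ case, and that the commutation of $\varphi_i$ with $\varphi_j$ is invoked only when they sit in distinct slots (which is guaranteed by the hypothesis $i\neq j$ built into the notation $R_{ij}$). No deeper property of the ribbon/XC-structure is needed — in particular, neither the Yang--Baxter equation nor the relations involving $\kappa$ play any role here. I therefore expect the proof to be short and purely formal, with the only subtlety being the verification that condition (ii) transfers from $R$ to $R_{ij}$ in both orientations.
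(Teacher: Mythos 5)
Your proposal is correct and matches the paper's argument, which simply cites the defining property $[\varphi\otimes\varphi,R]=0$ of the twisting element; you have spelled out the same one-line computation (index the relation, then commute $\varphi_i$ past $\varphi_j$ across disjoint tensor slots) in full detail, including the harmless flip-case check.
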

\begin{proof}
This follows from the definition of the twisting element.
\end{proof}

\begin{remark}
Let $P(x \otimes y) = y \otimes x$ be the interchanging map. Note that
\[
P \circ \check{R} = ( \varphi^{-1} \otimes 1) P \circ R (\varphi \otimes 1).
\]
Hence, the actions of $P \circ R$ and $P \circ \check{R}$ on a module $V \otimes V$ differ by a conjugation with $(\varphi^{-1} \otimes 1)$.
\end{remark}
Recall that in the definition of the universal invariant, the universal $R$-matrix $R = \sum_i \alpha_i \otimes \beta_{i}$ and its inverse $R^{-1} = \sum_i \bar{\alpha}_i \otimes \bar{\beta}_i$ are associated to the positive and negative crossings in the following manner.
\begin{equation}
\centre{
\centering
\includegraphics[scale=0.7]{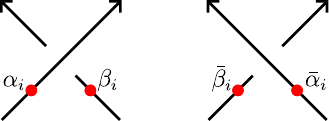}}
\end{equation}
The twisted $R$-matrix is expressed as $\check{R} = \sum_i  \alpha_{i} \varphi \otimes \varphi^{-1} \beta_i $ with inverse equal to $\check{R}^{-1} = \sum_i \varphi^{-1} \bar{\alpha}_i  \otimes \bar{\beta}_{i}\varphi $.
\begin{proposition}
Let \((A,R,\kappa)\) be an \(XC\)-algebra, and let \(\varphi \in A\) be a twisting element. Then \(A\) may be equipped with a new \(XC\)-structure, with universal \(R\)-matrix \(\check{R} \in A \otimes A\) and balancing element \(\kappa\).
\end{proposition}
\begin{proof}
  We need to check that the triple $(A,\check{R},\kappa)$ satisfies the axioms of an $XC$-algebra.

  The first condition follows from the condition $[\varphi, \kappa] = 0$. For the second condition we use Lemma \ref{lem:phiidentity}:
  \begin{align*}
\mu^{[3]}(\check{R}_{31} \cdot \kappa_2) &= \mu^{[3]} \left(\sum_i \varphi^{-1} \beta_i \otimes \kappa \otimes \alpha_i \varphi \right) \\
                                                &= \mu^{[3]} \left(\sum_i \beta_i \varphi^{-1} \otimes \kappa \otimes \varphi \alpha_i\right) \\
    &= \sum_i \beta_i \kappa  \alpha_i \\
    &= \mu^{[3]}(R_{31} \cdot \kappa_2).
  \end{align*}
 Working out $\mu^{[3]}(\check{R}_{13} \cdot \kappa_2^{-1})$ in a similar way results in the desired equality.

 For the third property, we have
  \begin{align*}
    &(\mu \otimes \mu^{[3]})(\check{R}_{15} \cdot \check{R}_{23}^{-1} \cdot \kappa_4^{-1})  \\
    &= (\mu \otimes \mu^{[3]})((\sum_i \alpha_i \varphi \otimes 1 \otimes 1 \otimes 1 \otimes \varphi^{-1} \beta_i)(\sum_j1 \otimes \varphi^{-1} \overline{\alpha}_j \otimes \overline{\beta}_j \varphi \otimes 1 \otimes 1) \kappa_4^{-1}) \\
    &= \sum_{i,j}(\alpha_i \varphi \varphi^{-1} \overline{\alpha}_j \otimes \overline{\beta}_j \varphi \kappa^{-1} \varphi^{-1} \beta_i) \\
    &=  (\mu^{[3]} \otimes \mu)(R_{15}^{-1} \cdot R_{23} \cdot \kappa_4^{-1}) \\
    &=  (1 \otimes \kappa^{-1}).
  \end{align*}
  The fourth property follows anagolously.

  For the last property, we compute
  \begin{align*}
    \check{R}_{12} \check{R}_{13} \check{R}_{23} &= \varphi_2^{-1}R_{12} \varphi_{1}\varphi_3^{-1}R_{13} \varphi_{1}\varphi_3^{-1}R_{23} \varphi_{2} \\
    &=\varphi_{1} R_{12}\varphi_2^{-1} \varphi_3^{-1}R_{13} \varphi_{1}\varphi_{2} R_{23} \varphi_3^{-1} \\
    &=\varphi_{1} R_{12} \varphi_3^{-1}R_{13} \varphi_{1} R_{23} \varphi_3^{-1} \\
    &=\varphi_{1}\varphi_3^{-1} R_{12} R_{13}  R_{23}\varphi_{1} \varphi_3^{-1} \\
    &=\varphi_{1}\varphi_3^{-1} R_{23} R_{13}  R_{12}\varphi_{1} \varphi_3^{-1} \\
    &=\varphi_3^{-1} R_{23} \varphi_2 \varphi_2^{-1} \varphi_{1}R_{13} \varphi_3^{-1} R_{12}\varphi_{1}  \\
    &=\varphi_3^{-1} R_{23} \varphi_2 \varphi_3^{-1} R_{13} \varphi_{1} \varphi_2^{-1} R_{12}\varphi_{1}  \\
    &=\check{R}_{23} \check{R}_{13} \check{R}_{12}.
  \end{align*}
  This proves the proposition.
\end{proof}
With the proposition above, it is possible to define a universal invariant induced by the XC-algebra $(A,\check{R},\kappa)$. For a upwards tangle $\mathcal{T}$, it shall be denoted by $\check{Z}_{A}(\mathcal{T})$. Pictorially, the element $\varphi$ (resp. $\varphi^{-1}$) is represented as a black (resp. white) dot, as illustrated below.
\begin{equation*}
\centre{
\centering
\includegraphics[scale=0.7]{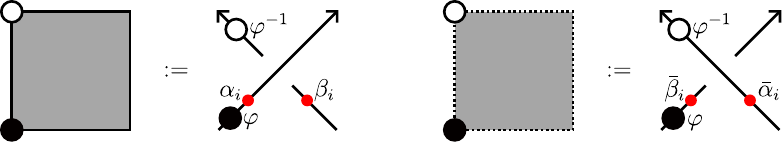}}
\end{equation*}
To ease some of the notation, crossings containing only the ``alpha'' and ``beta'' are replaced with a box, while the elements $\varphi,\varphi^{-1}$ are placed at the corners. From the definition of the twisting element $\varphi$, it is clear that
\[
  \check{R} = \sum_i \alpha_{i} \varphi  \otimes \varphi^{-1} \beta_i  = \sum_i \varphi \alpha_i \otimes  \beta_i \varphi^{-1}.
\]
  In diagrammatic form, this can be expressed as
\begin{equation*}
\centre{
\centering
\includegraphics[scale=0.7]{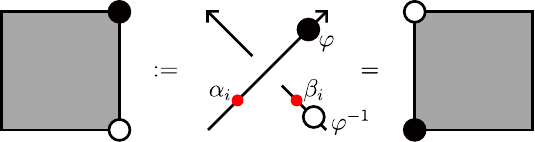}}
\end{equation*}
This identity also holds for the dotted box, associated to $\check{R}^{-1}$.
\begin{lemma}
  \label{lem:movingdots}
We have
\begin{align*}
  (\varphi^{\pm k} \otimes 1) \left( \sum_i \alpha_i \varphi^{n} \otimes \varphi^{-n} \beta_i  \right)&= \left( \sum_i \alpha_i \varphi^{n \pm k} \otimes \varphi^{-(n \pm k)} \beta_i \right) (1 \otimes \varphi^{\pm k} ) ,
\end{align*}
implying
\begin{equation*}
\centre{
\centering
\includegraphics[scale=0.7]{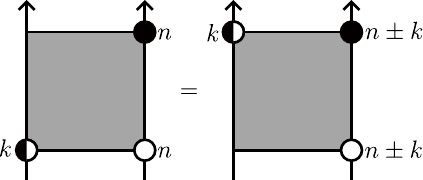}}
\end{equation*}
\noindent where, in the figure, we write a $+$ (resp. $-$) when the dot on the left strand is black (resp. white). A similar expression holds for the the dotted box.
\end{lemma}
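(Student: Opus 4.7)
The plan is to reduce both sides to the same normal form by exploiting the twisting identity (ii) in Definition~\ref{def:conjugating}, iterated to give $(\varphi^{m} \otimes \varphi^{m})R = R(\varphi^{m} \otimes \varphi^{m})$ for every integer $m$. The key observation, implicit already in the definition of $\check{R}$, is that the bead-expression appearing in the statement can be written compactly as
\[
\sum_i \alpha_i\varphi^{n}\otimes\varphi^{-n}\beta_i \;=\; (1 \otimes \varphi^{-n})\,R\,(\varphi^{n}\otimes 1),
\]
so that the lemma becomes a purely formal identity in $A\otimes A$ involving only $R$ and powers of $\varphi$.

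First I would rewrite the left-hand side as
\[
(\varphi^{\pm k}\otimes 1)(1 \otimes \varphi^{-n})R(\varphi^{n}\otimes 1) \;=\; (\varphi^{\pm k}\otimes\varphi^{-n})\,R\,(\varphi^{n}\otimes 1).
\]
Then for the right-hand side, I would split $\varphi^{n \pm k} = \varphi^{n}\varphi^{\pm k}$ and absorb the trailing $(1 \otimes \varphi^{\pm k})$ into the rightmost tensor factor to obtain
\[
(1 \otimes \varphi^{-(n \pm k)})\,R\,(\varphi^{\pm k}\otimes\varphi^{\pm k})(\varphi^{n}\otimes 1),
\]
using that powers of $\varphi$ commute with each other. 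The twisting identity then lets me slide $(\varphi^{\pm k}\otimes\varphi^{\pm k})$ past $R$, after which $\varphi^{-(n \pm k)}\varphi^{\pm k} = \varphi^{-n}$ in the second slot yields exactly the left-hand side. The argument for the dotted box associated to $\check{R}^{-1}$ is identical: $[\varphi \otimes \varphi, R]=0$ implies $[\varphi \otimes \varphi, R^{-1}]=0$, so the same calculation applies verbatim with $R$ replaced by $R^{-1}$.

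The diagrammatic reformulation is then immediate: a $\pm k$-dot sitting on the left strand below a box of weight $n$ is equivalent to replacing the box by one of weight $n\pm k$ and producing a $\pm k$-dot on the right strand above, precisely as depicted. I do not anticipate a genuine obstacle here; the only substantive input is recognizing that condition (ii) of Definition~\ref{def:conjugating} packages exactly the commutation needed to transport $\varphi$-powers across $R$, and the remainder is careful bookkeeping of the exponents of $\varphi$ in the two tensor slots.
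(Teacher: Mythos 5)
Your proof is correct and rests on the same single ingredient as the paper's: condition (ii) of Definition~\ref{def:conjugating}, iterated to slide $(\varphi^{\pm k}\otimes\varphi^{\pm k})$ across $R$ (and across $R^{-1}$, which handles the dotted box). The paper carries out the identical exponent bookkeeping directly on the sum $\sum_i \alpha_i\varphi^{n}\otimes\varphi^{-n}\beta_i$ rather than first rewriting it as $(1\otimes\varphi^{-n})R(\varphi^{n}\otimes 1)$, but this is only a notational difference.
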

\begin{proof}
From the defining relations of the twisting element, we find
  \begin{align*}
    (\varphi^{\pm k} \otimes 1) \left( \sum_i \alpha_i \varphi^n \otimes  \varphi^{-n} \beta_i  \right) &=  \sum_i \varphi^{\pm k} \alpha_{i} \varphi^n \otimes \varphi^{\pm k} \varphi^{-(n \pm k)} \beta_i \\
    &= \left( \sum_i \alpha_i \varphi^{n\pm k} \otimes \varphi^{-(n \pm k)} \beta_{i}\varphi^{\pm k}  \right).
  \end{align*}
  In the expression above, $\alpha_i$ (resp. $\beta_i$) can be replaced with $\bar{\alpha}_i$ (resp. $\bar{\beta}_i$).
\end{proof}
\begin{example}
  \label{ex:braidzcheck}
  Consider the following braid.
\begin{equation*}
  B=
\centre{
\centering
\includegraphics[scale=0.7]{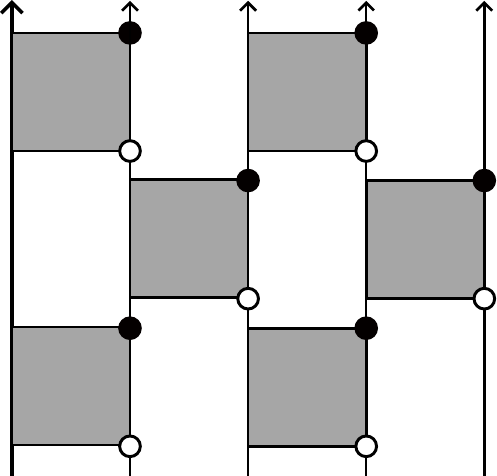}}
\end{equation*}
We associate $\check{Z}_A(B)$ to this braid, as described above. Also, let $\sigma$ be the permuation induces by $B$. Note that there are no elements $\varphi$ or $\varphi^{-1}$ present on the first strand. On the second strand, the first black dot counted from below can be moved along the strand toward the second white dot using Lemma~\ref{lem:movingdots}. These black and white dots then cancel. Due to the application of Lemma~\ref{lem:movingdots}, the dots on the third strand are labeled by $2$, meaning that $\varphi^2$ and $\varphi^{-2}$ lie on the right-hand corners of the gray square adjacent to the second and third strand. By another application of Lemma~\ref{lem:movingdots}, these dots can be moved to the end of the strand, leading to a labeling of $3$ in the fourth strand.  By repeated application of Lemma~\ref{lem:movingdots} we see that $\sigma_{*} \circ \check{Z}_A(B)$ is related to $\sigma_{*} \circ Z_A(B)$ by a conjugation with $1 \otimes \varphi \otimes \dots \otimes \varphi^{4}$. These steps above have been depicted in the following equation.

\begin{align*}
&\centre{
\centering
  \includegraphics[scale=0.55]{example1.pdf}}
  =
\centre{
\centering
  \includegraphics[scale=0.55]{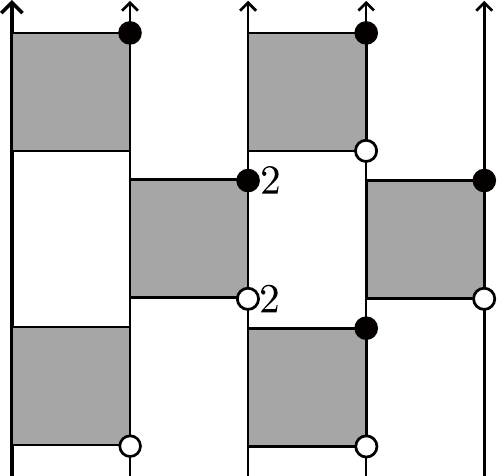}}\\
  &=
\centre{
\centering
  \includegraphics[scale=0.55]{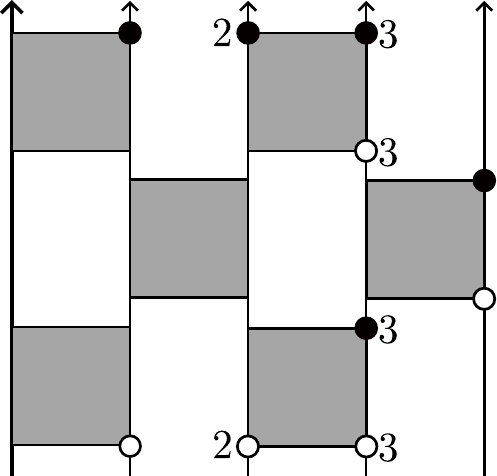}}
  =
\centre{
\centering
  \includegraphics[scale=0.55]{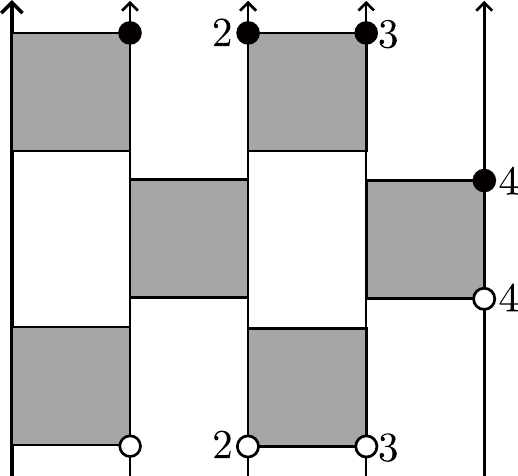}}
\end{align*}
\end{example}

\begin{theorem}
  \label{thm:twistedconj}
    Let $B_N$ be a braid of $N$ strand, and denote by $\sigma$ its induced permutation. Then $\sigma_{*} \circ \check{Z}_A(B_N)$ is related to $\sigma_{*} \circ Z_A(B_N)$ by conjugation with $1 \otimes \varphi \otimes \dots \otimes \varphi^{N-1}$.
\end{theorem}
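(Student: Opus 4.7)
The plan is to argue by induction on the number of strands $N$, implementing the iterative dot-migration procedure demonstrated in Example~\ref{ex:braidzcheck}. The base case $N=1$ is vacuous: a single-strand braid has no crossings, so $\check{Z}_A(B_1) = 1 = Z_A(B_1)$ and the conjugator $1 \otimes \varphi^0$ is trivial.

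For the inductive step, I would first use the identity $\sum \alpha_i \varphi \otimes \varphi^{-1}\beta_i = \sum \varphi\alpha_i \otimes \beta_i \varphi^{-1}$ (available because $[\varphi \otimes \varphi, R] = 0$) to arrange the $\varphi^{\pm 1}$-dots at every crossing of $B_N$ along a consistent diagonal of each gray box. The remainder of the argument is a diagrammatic cleanup of strands, processed in order from left to right. On each strand, dots are pushed along the strand until they either annihilate with an opposite-signed dot on the same strand, or reach a crossing box, whereupon Lemma~\ref{lem:movingdots} transports the dot across the box: it reappears on the neighbouring strand at the opposite corner, while the interior label of the box shifts by the corresponding exponent.

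The inductive invariant to maintain, visible already in Example~\ref{ex:braidzcheck}, is that after strands $1, \ldots, k$ have been cleaned up, every residual dot on strand $k+1$ carries a cumulative exponent of $k$. Lemma~\ref{lem:movingdots} preserves this, because pushing a $\varphi^{\pm m}$-dot across a box deposits a $\varphi^{\pm m}$-dot on the neighbour. Iterating over $k = 1, \ldots, N-1$, the only dots that remain are located at the top and bottom endpoints of the braid, and the $k$-th strand ends up with $\varphi^{k-1}$ on top and $\varphi^{-(k-1)}$ on the bottom---which is precisely conjugation by $1 \otimes \varphi \otimes \cdots \otimes \varphi^{N-1}$. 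The main obstacle is the combinatorial bookkeeping of this exponent migration, together with verifying that the many shifts to the interior box-labels accumulated during the cleanup cancel over the full braid, leaving the untwisted invariant $Z_A(B_N)$ sandwiched between the desired boundary conjugators.
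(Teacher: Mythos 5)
Your proposal is correct and follows essentially the same route as the paper: both formalize the dot-migration procedure of Example~\ref{ex:braidzcheck} via Lemma~\ref{lem:movingdots}, cancelling adjacent black/white dots and pushing the survivors to the endpoints of each strand. Your explicit inductive invariant (residual dots on strand $k+1$ carry cumulative exponent $k$) is in fact slightly more precise than the paper's own sketch, which simply asserts that the steps of the example generalize to any braid.
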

\begin{proof}
  The steps of Example~\ref{ex:braidzcheck} can be performed for any braid, where one has to perform either of following operations:
  \begin{itemize}
    \item cancel black and white dots that are directly next to another;
    \item apply Lemma~\ref{lem:movingdots} to move the black dot along the strand toward the white dot;
    \item apply Lemma~\ref{lem:movingdots} to move the black (resp. white) dot up (resp. down) toward the end of the strand.
  \end{itemize}
  This way, one ends up with zero dots on the first strand, one white (resp. black) dot at the lower (resp. upper) end of the second strand, etc.
\end{proof}

\begin{corollary}
  \label{cor:equalknotinvariants}
Let $\mathcal{K}$ be a long knot. Then $Z_A(\mathcal{K}) = \check{Z}_A(\mathcal{K}) $
\end{corollary}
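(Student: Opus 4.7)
The plan is to leverage Theorem~\ref{thm:twistedconj} by presenting $\mathcal{K}$ as a braid closure, and then to observe that the residual conjugation factor trivialises once the closure is performed.

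First, I would present $\mathcal{K}$ as the long-knot closure of an upwards $N$-strand braid $B_N$: arrange the diagram of $\mathcal{K}$ so that all crossings are concentrated inside a braid box, with the remaining arcs organised into $N-1$ rotational cup-cap pairs (in the two admissible forms) that reduce the $N$ strands to a single open strand. Every rotational long-knot diagram admits such a presentation after planar isotopy.

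Second, by Theorem~\ref{thm:twistedconj}, setting $C := 1 \otimes \varphi \otimes \cdots \otimes \varphi^{N-1}$, one has
\[
\sigma_{*}\circ\check{Z}_A(B_N) \;=\; C\,\bigl(\sigma_{*}\circ Z_A(B_N)\bigr)\,C^{-1}.
\]
Composing both sides with the beads of the closure arcs — which consist solely of powers of $\kappa$ — yields $\check{Z}_A(\mathcal{K})$ on the left and an expression for $Z_A(\mathcal{K})$ dressed with residual factors coming from $C$ and $C^{-1}$ on the right.

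Third, using $[\varphi,\kappa]=0$ from Definition~\ref{def:conjugating}, the residual $\varphi^{\pm k}$ factors slide freely through the closure arcs' $\kappa^{\pm 1}$ beads. Tracking them along the single strand of $\mathcal{K}$, the factor $\varphi^{i-1}$ contributed at the top of braid-strand $i$ by $C$ meets the $\varphi^{-(i-1)}$ contributed at the bottom of the same braid-strand by $C^{-1}$ along the unique strand traversal, so the two annihilate. This leaves $\check{Z}_A(\mathcal{K}) = Z_A(\mathcal{K})$.

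The main obstacle is the combinatorial bookkeeping in the last step: verifying in full generality that every $\varphi^{i-1}$ from the top of a braid-strand can be paired with the matching $\varphi^{-(i-1)}$ from its bottom under an arbitrary cup-cap configuration. A cleaner way to package this would be to formalise the long-knot closure as a partial trace over the $N-1$ closed strands and invoke its cyclicity, which together with $[\varphi,\kappa]=0$ makes the conjugation by $C$ disappear automatically.
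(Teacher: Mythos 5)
Your proposal follows essentially the same route as the paper: present $\mathcal{K}$ as the partial closure of a braid $B_N$, invoke Theorem~\ref{thm:twistedconj} to get the conjugation by $1\otimes\varphi\otimes\cdots\otimes\varphi^{N-1}$, and use $[\varphi,\kappa]=0$ to cancel the residual $\varphi^{\pm k}$ factors through the closure arcs. The partial-trace/cyclicity packaging you suggest at the end is a reasonable way to tidy the bookkeeping, but the paper's proof is content with the same level of detail as your main argument.
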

\begin{proof}
  Let $B_N$ be a braid of $N$ strands whose partial closure (leaving one strand open) is equal to the long knot $\mathcal{K}$. Also, let $\sigma$ be its induced permutation. By Theorem~\ref{thm:twistedconj}, we know that $ \sigma^{*} \circ \check{Z}_A(B_N)$ is related to $\sigma^{*} \circ Z_A(B_N)$ by conjugation with $1 \otimes \varphi \otimes \dots \otimes \varphi^{N-1}$. The invariant of the closure can be computed through multiplication of the appropriate tensor factors with an element $\kappa$ in between. Since $[\varphi,\kappa] = 0$, we see that the elements $\varphi^k$ and $\varphi^{-k}$ cancel in every closure. Thus, $Z_A(\mathcal{K}) = \check{Z}_A(\mathcal{K}) $.
\end{proof}

\section{Perturbed knot invariants}
\label{sec:perturbedknot}
In this section, we will give a short overview of the algebra $U_h(\mathfrak{sl}_2)$ together with its Verma modules, following \cite{kassel}. Next, we will demonstrate how the universal $R$-matrix of the Drinfeld double $\mathfrak{D}_{\mathfrak{sl}_2}$ can be transformed to match the $R$-matrix of $U_h(\mathfrak{sl}_2)$ through twisting. By establishing an isomorphism between $\mathfrak{D}_{\mathfrak{sl}_2}$ and the algebra $\mathbb{D}$, we can then compare different perturbative expansions.

\subsection{A review of $U_q(\mathfrak{sl}_2)$ and the colored Jones polynomial}
\label{sec:uqsl2}

Consider the 3-dimensional vector space $\mathfrak{sl}_2$ over the field $\mathbb{C}$ with preferred basis $H,E,F$ and Lie bracket $[H,E]=2E$, $[H,F]=-2F$ and $[E,F]=H$. In terms of matrices, the Lie algebra is represented as
\[
E = \begin{pmatrix}
	0 & 1 \\
	0 & 0
\end{pmatrix},
\quad
F = \begin{pmatrix}
	0 & 0 \\
	1 & 0
\end{pmatrix}, \quad
H = \begin{pmatrix}
	1 & 0 \\
	0 & -1
\end{pmatrix}.
\]
It is standard to extend this Lie algebra to the \textit{universal enveloping algebra} denoted by $\mathbf{U} := U(\mathfrak{sl}_2)$. We hereby upgrade the Lie algebra to an associative algebra over $\mathbb{C}$ with the same generators and relations as $\mathfrak{sl}_2$.

Let $k$ be a field of characteristic zero. The \textit{quantum group} $U_q := U_q(\mathfrak{sl}_2)$ is the algebra over $k(q^{1/2})$ with generator $E,F,K,K^{-1}$ and relations
\[
  \begin{array}{c}K K^{-1}=K^{-1} K=1 \\ K E K^{-1}=q^{2} E, \quad K F K^{-1}=q^{-2} F, \\ {[E, F]=\frac{K-K^{-1}}{q-q^{-1}}}.\end{array}
  \]
Set $[n]_{q}:= \frac{q^{n}-q^{-n}}{q-q}$. The algebra is endowed with the structure of a Hopf algebra:
\[ \begin{array}{lll}%
	\Delta(E)=1 \otimes E + E \otimes K  &  \epsilon(E)=0  & S(E)=-EK^{-1} \\
	\Delta(F) = K^{-1} \otimes F + F \otimes 1  &  \epsilon(F)=0 & S(F)=-KF  \\
	\Delta(K)=K \otimes K  &  \epsilon(K)=0  &  S(K) = K^{-1}.
\end{array}\]

To define a universal $R$-matrix, one commonly runs into the issue of convergence. This can be solved by considering the representation theory of $U_q$, or by considering the topological algebra $U_h := U_h(\mathfrak{sl}_2)$ over $k \llbracket h \rrbracket$. This topological algebra is generate by $E,F,H$ with relations
\begin{align*}
  [H,E] = 2E \qquad [H,F] = -2F \\
  [E,F] = \frac{q^H - q^{-H}}{q-q^{-1}} \qquad q = e^h.
\end{align*}
It can be endowed with the structure of a Hopf algebra
\[ \begin{array}{lll}%
	\Delta(E)=1 \otimes E + E \otimes q^H  &  \epsilon(E)=0  & S(E)=-Eq^{-H} \\
	\Delta(F) = q^{-H} \otimes F + F \otimes 1  &  \epsilon(F)=0 & S(F)=-q^HF  \\
	\Delta(H)=H \otimes 1 + 1 \otimes H  &  \epsilon(H)=0  &  S(H) = -H.
\end{array}\]
Moreover, the algebra comes with an universal $R$-matrix
  \begin{align}
    \label{eq:rmatrix}
     R &=\sum_{n=0}^{\infty} \frac{(q - q^{-1})^{n}}{[n]_q !} q^{\frac{H \otimes H}{2} + \frac{n(n-1)}{2}} E^{n} \otimes F^{n} \\
    &= \sum_{n=0}^{\infty} \frac{(q - q^{-1})^{n}}{[n]_q !}  q^{\frac{n(n-1)}{2}} (E q^{-H})^{n} \otimes (q^{H}F)^{n}q^{\frac{H \otimes H}{2}}, \nonumber
  \end{align}
  with inverse
\begin{align*}
  R^{-1}&=\sum_{n=0}^{\infty} \frac{(q^{-1}-q)^{n}}{[n]_q !}   q^{-\frac{n(n-1)}{2}} E^{n} \otimes F^{n} q^{-\frac{H \otimes H}{2}}.
  \end{align*}
	Furthermore, the ribbon element is $v:=K^{-1} u$ where $u= \sum S(\beta) \alpha$ if $R=\sum \alpha \otimes \beta$. This leads to the following.

\begin{theorem}
  \label{thm:ribbonhopf}
The triple $(U_h(\mathfrak{sl}_2),R,v)$ forms a topological ribbon Hopf algebra, with balancing element $q^H$.
\end{theorem}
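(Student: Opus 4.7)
The plan is to verify, one by one, the defining axioms of a ribbon Hopf algebra for the triple $(U_h(\mathfrak{sl}_2), R, v)$, treating this as essentially the classical computation found in Kassel's book. The three things that require verification are: (i) that $R$ as defined in \eqref{eq:rmatrix} is a universal $R$-matrix, i.e. quasi-triangularity together with the two hexagon identities; (ii) that $v = K^{-1} u$ with $u = \sum S(\beta_i)\alpha_i$ satisfies the ribbon axioms (centrality, $\Delta(v) = (R_{21} R_{12})^{-1}(v \otimes v)$, $S(v) = v$, $\epsilon(v) = 1$); and (iii) that $q^H$ is a balancing element, meaning it is grouplike and implements $S^2$ by conjugation in a way compatible with $v$ and $u$.

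For (i) I would first establish the auxiliary $q$-binomial formulas for the coproducts of powers, namely
\[
\Delta(E^n) = \sum_{k=0}^n \binom{n}{k}_q q^{k(n-k)} E^{n-k} q^{kH} \otimes E^k, \qquad
\Delta(F^n) = \sum_{k=0}^n \binom{n}{k}_q q^{-k(n-k)} q^{-(n-k)H} F^k \otimes F^{n-k},
\]
by induction using the defining relations of $U_h$. Then the quasi-triangularity identity $\Delta^{\mathrm{op}}(x) R = R \Delta(x)$ reduces to checking the cases $x = E, F, H$ termwise in the $h$-adic expansion, and the hexagon identities $(\Delta \otimes \mathrm{id})(R) = R_{13} R_{23}$ and $(\mathrm{id} \otimes \Delta)(R) = R_{13} R_{12}$ follow from these coproduct formulas together with the $q$-binomial theorem.

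For (ii) I would first compute $u$ explicitly. Applying $S$ to $F^n$ and multiplying by $E^n$ in the first factor yields a closed expression of the form $u = \sum_n c_n(q) q^{nH} F^n E^n q^{H^2/2}$ up to explicit scalar factors; multiplying by $K^{-1} = q^{-H}$ then gives $v$. Centrality of $v$ follows from the general fact that in any quasi-triangular Hopf algebra the element $u$ implements $S^2$ via $S^2(x) = uxu^{-1}$, combined with the grouplike $K$ compensating the $H$-shift. The remaining axioms $S(v)=v$, $\epsilon(v)=1$ and $\Delta(v) = (R_{21}R_{12})^{-1}(v \otimes v)$ are standard consequences of how $u$ behaves under $S$ and $\Delta$ in a quasi-triangular Hopf algebra.

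For (iii) I would verify directly from the Hopf structure given in Section~\ref{sec:uqsl2} that $\Delta(q^H) = q^H \otimes q^H$, $S(q^H) = q^{-H}$, and $q^H x q^{-H} = S^2(x)$ on the generators (this is immediate from $KEK^{-1}=q^2 E$, etc., extended $h$-adically). Together with $v$ being central, this gives the balancing identity $\kappa = u v^{-1} = q^H$. The main obstacle I expect is bookkeeping rather than substance: the $q$-binomial manipulations for the hexagon identities are technical, and care is required to justify convergence of all infinite sums in the $h$-adic topology, which is handled by truncating modulo $h^N$ and passing to the limit.
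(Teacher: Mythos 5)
Your proposal is correct in outline, but it takes a different route from the paper only in the sense that the paper does not prove this statement at all: its entire proof is the citation ``See \cite[Thm 4.14]{ohtsuki2002quantum}'', deferring the classical verification to Ohtsuki. What you have written is essentially a roadmap of the computation that lives inside that citation (and in Kassel): the $q$-binomial coproduct formulas for $E^n$ and $F^n$, the hexagon and quasi-triangularity checks, the computation of $u=\sum S(\beta_i)\alpha_i$, centrality of $v=K^{-1}u$ via $S^2(x)=uxu^{-1}=q^Hxq^{-H}$, and the identification $\kappa=uv^{-1}=q^H$. All of these steps are sound and the key identity $q^HEq^{-H}=q^2E=S^2(E)$ that makes $v$ central is correctly invoked. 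The one place where your sketch leans hardest on ``standard consequences'' is the ribbon axioms $S(v)=v$ and $\Delta(v)=(R_{21}R_{12})^{-1}(v\otimes v)$: for $U_h(\mathfrak{sl}_2)$ the first of these requires the specific identity $uS(u)^{-1}=q^{-2H}$, which is a genuine computation rather than a formal consequence of quasi-triangularity, so if you were to write this out in full that is the step that would need the most care. In short, the paper buys brevity by outsourcing the whole theorem; your approach buys self-containedness at the cost of several pages of $q$-binomial bookkeeping, and as a sketch it is an accurate description of what that verification involves.
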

\begin{proof}
See \cite[Thm 4.14]{ohtsuki2002quantum}.
\end{proof}
\begin{notation}
  \label{not:iota}
Let $\alpha \in \{\pm 1\}$. Define the map $\iota_{\alpha} \colon U_h \to U_h$
\begin{align*}
E \mapsto q^{-H (\alpha -1)/2} F \qquad F \mapsto E q^{H(\alpha-1)/2}\qquad H \mapsto -H \qquad h \mapsto \alpha h.
\end{align*}
\end{notation}
We note that
\begin{align*}
  (\iota_{-1} \otimes \iota_{-1}) (R) = P \circ R^{-1}  \qquad (\iota_{-1} \otimes \iota_{-1})(R^{-1}) = P\circ  R.
\end{align*}
Let $\mathcal{K}$ be a 0-framed long knot with mirror image $\overline{\mathcal{K}}$. Using to the definition of the universal invariant, together with equation~\eqref{eq:beads}, we can establish the following proposition.
\begin{proposition}
  \label{pro:iotaminus}
We have $\iota_{-1}(Z_{U_h}(\mathcal{K})) = Z_{U_h}(\overline{\mathcal{K}})$.
\end{proposition}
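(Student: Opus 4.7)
The plan is to apply $\iota_{-1}$ bead-by-bead to the universal invariant of $\mathcal{K}$ and recognise the result as the universal invariant of $\overline{\mathcal{K}}$. Concretely, I would first fix a rotational diagram $D$ for $\mathcal{K}$ and let $\overline{D}$ denote the diagram obtained by switching over- and understrand at every crossing of $D$ while keeping the planar shape (and in particular the maxima/minima pattern) intact; this represents $\overline{\mathcal{K}}$. The invariant $Z_{U_h}(\mathcal{K})$ is then a sum of products of beads taken from $\{\alpha_i,\beta_i,\bar{\alpha}_i,\bar{\beta}_i,q^H,q^{-H}\}$, multiplied in the order they are read along the single strand of the long knot. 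Because $\iota_{-1}$ extends to $U_h$ as an algebra endomorphism, applying it to $Z_{U_h}(\mathcal{K})$ distributes over this product and over the summation.

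The heart of the argument is the local verification at each crossing. At a positive crossing of $D$, $\alpha_i$ sits on the overstrand and $\beta_i$ on the understrand. The identity $\sum_i \iota_{-1}(\alpha_i)\otimes \iota_{-1}(\beta_i) = \sum_j \bar{\beta}_j \otimes \bar{\alpha}_j$, which unpacks the formula $(\iota_{-1}\otimes\iota_{-1})(R) = P\circ R^{-1}$, then shows that after applying $\iota_{-1}$ the old overstrand carries $\bar{\beta}_j$ and the old understrand carries $\bar{\alpha}_j$. This is exactly the bead pattern prescribed for the corresponding negative crossing of $\overline{D}$, where the over/under roles have been swapped: $\bar{\alpha}$ goes on the new overstrand (the old understrand) and $\bar{\beta}$ on the new understrand (the old overstrand). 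Put differently, the tensor transposition $P$ appearing on the right-hand side of the displayed identity encodes precisely the over/under swap performed by the mirror. Negative crossings of $D$ are treated symmetrically using $(\iota_{-1}\otimes \iota_{-1})(R^{-1}) = P\circ R$.

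It then remains to match the rotational-end beads. Since $\iota_{-1}(H) = -H$ and $\iota_{-1}(h) = -h$, we have $\iota_{-1}(q^{\pm H}) = q^{\mp H}$. I would argue that this is precisely how the $\kappa^{\pm 1}$-beads must transform under mirroring: a maximum or minimum labelled by $q^{+H}$ in the rotational diagram of $\mathcal{K}$ becomes one labelled by $q^{-H}$ in the rotational diagram of $\overline{\mathcal{K}}$, as is forced by the requirement that the invariant remain that of a $0$-framed long knot after reversing every crossing. Putting these bead-level matches together, $\iota_{-1}(Z_{U_h}(\mathcal{K}))$ coincides with the bead product associated to $\overline{D}$, which is $Z_{U_h}(\overline{\mathcal{K}})$.

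The main obstacle I anticipate is the careful local-to-global bookkeeping: one must verify that the tensor swap $P$ produced by $(\iota_{-1}\otimes\iota_{-1})(R^{\pm 1})$ genuinely accounts for the overstrand/understrand exchange once the beads are read in the strand order and multiplied accordingly, and likewise that the rotational-end contributions transform correctly. A generator-by-generator check on the two crossing types and on the two cup/cap types depicted in equation~\eqref{eq:beads} should make this explicit and reduce the proof to the two $R$-matrix identities already recorded before the proposition.
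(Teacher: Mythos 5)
Your overall strategy --- apply $\iota_{-1}$ bead by bead, use the recorded identities $(\iota_{-1}\otimes\iota_{-1})(R)=P\circ R^{-1}$ and $(\iota_{-1}\otimes\iota_{-1})(R^{-1})=P\circ R$ to convert each decorated crossing of $D$ into the correctly decorated opposite crossing of the crossing-switched diagram $\overline{D}$, and then check the cup/cap beads separately --- is exactly the argument the paper intends (the proposition is stated immediately after those two identities and is justified only by appeal to the definition of $Z_A$ and to the bead picture~\eqref{eq:beads}). Your local analysis at the crossings is correct: the transposition $P$ is precisely what encodes the over/under exchange, and since $\iota_{-1}$ is an algebra map and the strand order is unchanged under mirroring, the local matches assemble into the global identity.

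However, your treatment of the rotational-end beads rests on two false statements that happen to cancel. First, $\iota_{-1}(q^{\pm H})\neq q^{\mp H}$: since $\iota_{-1}$ sends $h\mapsto -h$ \emph{and} $H\mapsto -H$, it fixes $q^{\pm H}=e^{\pm hH}$, hence fixes the balancing element $\kappa=q^{H}$ and its inverse. Second, the mirror diagram $\overline{D}$ you chose (switch every crossing, keep the planar shape) has exactly the same maxima/minima with the same rotation numbers as $D$, so its cups and caps carry the \emph{same} beads $q^{\pm H}$, not the inverted ones. The correct justification of this step is therefore the opposite of what you wrote: the cup/cap contributions match because $\iota_{-1}$ leaves $\kappa^{\pm1}$ untouched and mirroring through the plane of the diagram leaves the rotational decoration untouched. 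The conclusion of the proposition is unaffected, but as written this portion of your proof asserts two incorrect identities and should be repaired accordingly.
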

One key advantage of considering long knots over closed knots is demonstrated by the following theorem.
\begin{theorem}
  \label{thm:knotcenter}
   The universal invariant $Z_{U_h}(\mathcal{K})$ lies in the center of $U_h$.
\end{theorem}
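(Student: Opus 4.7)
The plan is to use the Reshetikhin--Turaev interpretation of $Z_{U_h}(\mathcal{K})$ together with the fact that the Verma modules of $U_h(\mathfrak{sl}_2)$ detect elements of $U_h$.

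First, I would recall from Section~\ref{sec:theunivinvar} that for any $U_h$-module $V$ colored on the single strand of the long knot $\mathcal{K}$, the Reshetikhin--Turaev construction produces
\[
RT_{\mathcal{K}}(V) = \rho_V(Z_{U_h}(\mathcal{K})) \in \mathrm{Hom}_{U_h}(V, V),
\]
that is, an endomorphism of $V$ which is $U_h$-linear by the very definition of the category of $U_h$-modules. Equivalently,
\[
\rho_V([x, Z_{U_h}(\mathcal{K})]) = 0 \quad \text{for every } x \in U_h.
\]

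Next, I would specialize to the Verma modules $V_\lambda$ of $U_h(\mathfrak{sl}_2)$ introduced in the next subsection. The only subtlety here is that these modules are infinite-dimensional, whereas the Reshetikhin--Turaev formalism recalled in Section~\ref{sec:theunivinvar} was stated for finite-rank modules. However, $Z_{U_h}(\mathcal{K}) \in U_h$ is computed purely by reading beads along the single open strand of $\mathcal{K}$ --- no quantum trace is taken --- so the equivariance argument extends verbatim to Vermas. Diagrammatically, this is the statement that a bead $x$ placed above $\mathcal{K}$ on the single strand can be isotoped to a bead below $\mathcal{K}$, giving two tangles whose RT morphisms are $\rho_V(x) \rho_V(Z_{U_h}(\mathcal{K}))$ and $\rho_V(Z_{U_h}(\mathcal{K})) \rho_V(x)$ respectively.

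Finally, the family $\{V_\lambda\}_\lambda$ is jointly faithful on $U_h$: by the PBW-type decomposition of $U_h$ and the explicit highest-weight action of $E$, $F$, $H$ on a generic Verma, an element of $U_h$ that vanishes on every $V_\lambda$ is zero. Applied to $y = [x, Z_{U_h}(\mathcal{K})]$ for each $x \in U_h$, this yields $Z_{U_h}(\mathcal{K}) \in Z(U_h)$. The main obstacle I anticipate is rigorously adapting the Reshetikhin--Turaev equivariance from finite-rank modules to Verma modules in the $h$-adic topological setting, and verifying joint faithfulness over $k\llbracket h \rrbracket$; once these technicalities are in place the conclusion is formal.
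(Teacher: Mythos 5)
The step that fails is your claim that ``the equivariance argument extends verbatim to Vermas.'' The reason $RT_{\mathcal{K}}(V)=\rho_V(Z_{U_h}(\mathcal{K}))$ is a $U_h$-module map is that the diagram decomposes into elementary pieces each of which is a morphism in $\mathrm{Mod}_{U_h}$: the crossings (via $R\Delta=\Delta^{\mathrm{op}}R$) and the cups and caps, which are the evaluation and coevaluation morphisms of a \emph{dualizable} (finite-rank) module. Dropping the quantum trace removes only one of the two places where finiteness enters; any nontrivially knotted long knot still contains internal maxima and minima, and for an infinite-rank Verma module the coevaluation $k\to V\otimes V^{*}$ is a divergent sum, so the decomposition into module morphisms is simply unavailable. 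Nor can you treat a curl as a single composite piece: it acts by $\rho_V(\kappa^{\pm1})$, which is not $U_h$-linear since $\kappa x\kappa^{-1}=S^{2}(x)$. Note that the paper itself runs the logic in the opposite direction: centrality is quoted first (the proof is a citation to Habiro, whose argument shows by a topological sliding move that the universal invariant of a one-component bottom tangle is invariant under the adjoint action, whence central via $xy=\sum \mathrm{ad}_{x_{(1)}}(y)\,x_{(2)}$), and only then is the scalar action on Verma modules deduced. Your proposal inverts this order without supplying the missing input.

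The strategy is salvageable with one substitution: run the same argument on the finite-dimensional simple modules $V_h(1,n)$, $n\geq 0$, instead of the Vermas. For these, $\rho_{V}(Z_{U_h}(\mathcal{K}))\in\mathrm{Hom}_{U_h}(V,V)$ is exactly the Reshetikhin--Turaev statement already recalled in Section~\ref{sec:theunivinvar}, with no convergence issues. Joint faithfulness still holds: by Harish-Chandra's theorem $U(\mathfrak{sl}_2)$ is residually finite-dimensional (or, concretely, a PBW/Vandermonde argument on the coefficients of $F^aH^bE^c$ acting on $v_p\in V_n$ for large $n$), and an $h$-adic induction on the leading term lifts this to $U_h(\mathfrak{sl}_2)$ over $k\llbracket h\rrbracket$. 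With that replacement your argument closes, and it is a genuinely different, representation-theoretic route from the topological ad-invariance proof the paper cites.
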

\begin{proof}
See \cite[Prop. 8.2]{habiro2006bottom}.
\end{proof}

Fix a scalar $\lambda \neq 0$. It is possible to construct the Verma module of highest weight $\lambda$ of $U_q(\mathfrak{sl}_2)$. As shown in \cite{kassel}, one finds an infinite-dimensional vector space $M_q(\lambda)$ over $k(q^{1/2})$ with basis $\{v_i\}_{i \in \mathbb{N}}$ upon which $U_q(\mathfrak{sl}_2)$ acts by
\begin{align*}
  K v_{p}&=\lambda q^{-2 p} v_{p}, \quad K^{-1} v_{p}=\lambda^{-1} q^{2 p} v_{p}, \\ E v_{p+1}&=\frac{q^{-p} \lambda-q^{p} \lambda^{-1}}{q-q^{-1}} v_{p}, \quad F v_{p}=[p+1]_q v_{p+1} \\
  Ev_0 &= 0.
\end{align*}

Note that $M_q(\lambda)$ cannot be simple when $\lambda$ is of the form $\pm q^{\mathrm{n}}$ where $n$ is a non-negative integer. A rescaling of vectors yields
\begin{align}
  \label{eq:verma}
\begin{split}
  K v_{p}&=\lambda q^{-2 p} v_{p}, \quad K^{-1} v_{p}=\lambda^{-1} q^{2 p} v_{p}, \\ E v_{p+1}&=[p+1]_q v_{p}, \quad F v_{p}=\frac{q^{-p} \lambda-q^{p} \lambda^{-1}}{q-q^{-1}} v_{p+1} \\
  Ev_0 &= 0.
\end{split}
\end{align}

The Verma module can equally well be understood in the $h$-formal sense, in which case it is not a vector space over $k(q^{1/2})$ but instead a module over $k \llbracket h \rrbracket$ with an action of $U_h(\mathfrak{sl}_2)$. This module is denoted by $M_h(\lambda)$, with the following actions
\begin{align}
  \label{eq:vermah}
\begin{split}
  H m_{p}&=(\lambda - 2p) m_{p}, \quad Em_0 = 0 \\ E m_{p+1}&=[p+1]_q m_{p}, \quad F m_{p}=\frac{q^{-p+ \lambda} -q^{p- \lambda}}{q-q^{-1}} m_{p+1}.
\end{split}
\end{align}
The element $m_0 \in M_h(\lambda)$ will be called \textit{the highest weight vector}.

Let $\varepsilon \in \{ \pm 1\}$. Up to isomorphism, there exists a unique simple $U_q$-module of dimension $n+1$ that is generated by a highest weight vector of weight $\varepsilon q^n$. Moreover, it is a quotient of the Verma module $M_q(\varepsilon q^n)$. We denote this module by $V_q(\varepsilon,n)$, with $U_q$ action given by
\[
  K v_p = \varepsilon q^{n-2p} v_p \qquad E v_p = \varepsilon [p]_qv_{p-1} \qquad F v_{p-1} = [n-p-1]_qv_p.
  \]
As with the Verma module, it is possible to construct a $U_h(\mathfrak{sl}_2)$-module in an analogous way, which shall be denoted by $V_h(\varepsilon,n)$.

\begin{remark}
    The universal $R$-matrix for the quantum group $U_q$ is not be well-defined, but there is a well-defined $R$-matrix action on the $U_q$-module $V_q(1,n )\otimes V_q(1,n)$. This is because when the Cartan part $q^{\frac{1}{2}H \otimes H}$ acts on a vector, the variable $q$ can be reintroduced as follows:
\[
q^{\frac{1}{2}H \otimes H} v_i \otimes v_j = q^{\frac{1}{2}(n-2i)(n-2j)} v_i \otimes v_j.
\]
\end{remark}
The following proposition is based on \cite[Prop. 42]{willetts2022unification}.
\begin{proposition}
Let $m \in M_h(\lambda)$ with $\lambda \neq 0$ or $m \in V_h(\varepsilon,n)$. There exists a $\beta \in k \llbracket h \rrbracket$ such that $Z_{U_h}(\mathcal{K}) m = \beta m$.
\end{proposition}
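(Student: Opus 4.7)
The plan is to show that $Z := Z_{U_h}(\mathcal{K})$ acts as a scalar on each weight space of the module and then propagate that scalar across weight spaces using a central-element commutation argument. The crucial input is Theorem~\ref{thm:knotcenter}, which places $Z$ in the center of $U_h$.

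First I would invoke the $H$-weight decomposition of $M_h(\lambda)$. From equation~\eqref{eq:vermah} each basis vector $m_p$ is an $H$-eigenvector with weight $\lambda - 2p$, and these weights are pairwise distinct; hence each weight space is a rank-one free $k\llbracket h \rrbracket$-module spanned by a single $m_p$. Since $Z$ commutes with $H$, it preserves weight spaces, forcing $Z m_p = \beta_p m_p$ for some $\beta_p \in k\llbracket h \rrbracket$.

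Next I would compare adjacent weight spaces by using that $Z$ also commutes with $E$. Applying the two orders of composition to $m_{p+1}$ yields
\[
[p+1]_q \, \beta_p \, m_p \;=\; E Z m_{p+1} \;=\; Z E m_{p+1} \;=\; \beta_{p+1} \, [p+1]_q \, m_p.
\]
Because $[p+1]_q = (p+1) + O(h^2)$ has nonzero constant term for $p \geq 0$, it is a unit in $k\llbracket h \rrbracket$, so $\beta_p = \beta_{p+1}$. Setting $\beta := \beta_0$ gives $Z m_p = \beta m_p$ for every $p$, and the conclusion extends by $k\llbracket h \rrbracket$-linearity to arbitrary $m \in M_h(\lambda)$. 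The same weight-plus-commutation argument, applied to the analogous basis of $V_h(\varepsilon, n)$, disposes of the finite-dimensional case.

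I do not foresee a serious obstacle: the two non-trivial points are the one-dimensionality of weight spaces and the invertibility of $[p+1]_q$ in $k\llbracket h \rrbracket$, both of which follow by inspection of equations~\eqref{eq:verma} and~\eqref{eq:vermah}. The hypothesis $\lambda \neq 0$ is not needed for this chain of reasoning, but ensures that $M_h(\lambda)$ is the intended nonzero highest-weight module on which the statement is being made.
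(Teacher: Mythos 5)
Your proof is correct and takes essentially the same route as the paper: both invoke Theorem~\ref{thm:knotcenter} to get centrality, use commutation with $H$ to see that each basis vector $m_p$ is an eigenvector of $Z_{U_h}(\mathcal{K})$, and then use commutation with $E$ together with $E m_{p+1} = [p+1]_q m_p$ to propagate the eigenvalue across weight spaces. Your explicit remarks on the one-dimensionality of the weight spaces and the invertibility of $[p+1]_q$ in $k\llbracket h\rrbracket$ merely spell out steps the paper leaves implicit.
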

From Theorem~\ref{thm:knotcenter}, we know that $Z_{U_h}(\mathcal{K})$ lies in the center of $U_h$. Let us check the statement for a basis element $m_i \in M_h(\lambda)$. We have
$H Z_{U_h}(\mathcal{K}) m_i =Z_{U_h}(\mathcal{K}) H m_i = (\lambda - 2 i) Z_{U_h}(\mathcal{K})   m_i$. Hence, there exists a $\beta_i \in k \llbracket h \rrbracket$ such that $Z_{U_h} m_i = \beta_i m_i$. With this, we find that $$ E Z_{U_h}(\mathcal{K}) m_{i+1} =  \beta_{i+1}E m_{i+1} = \beta_{i+1} [i+1]_q m_i.$$ On the other hand, we also have
\begin{align*}
 E Z_{U_h}(\mathcal{K}) m_{i+1} = Z_{U_h}(\mathcal{K}) E m_{i+1}  =[i+1]_q Z_{U_h}(\mathcal{K})m_i = \beta_i [i+1]_{q} m_i.
\end{align*}
Thus, we conclude $\beta_{i+1} = \beta_i$. Defining $\beta := \beta_i$ yields the proposition for when $m \in M_h(\lambda)$. The same argument holds for when $m \in V_h(\varepsilon,n)$.

\begin{definition}
    The \textit{colored Jones polynomial} of $\mathcal{K}$ is defined as
\[ [n]_q J_{\mathcal{K}}^{n}(q) :=  \mathrm{tr}_q^{V_{n-1}}RT_{V_{n-1}}(\mathcal{K}) \]
where $V_n := V_q(1,n)$ is the $(n+1)$-dimensional $U_q(\mathfrak{sl}_2)$-module defined above.
\end{definition}
\begin{proposition}
  \label{pro:cjuniversal}
  Let $v_0 \in V_h(1,n-1)$. The following equality holds:
  \[
    J_{\mathcal{K}}^n(e^h)v_0 = Z_{U_h}(\mathcal{K}) v_0.
    \]
\end{proposition}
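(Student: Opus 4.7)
The plan is to combine the centrality of $Z_{U_h}(\mathcal{K})$ with the defining relation between the universal invariant and the Reshetikhin--Turaev functor. By Theorem~\ref{thm:knotcenter}, $Z_{U_h}(\mathcal{K})$ lies in the centre of $U_h$, and by the preceding proposition there is a scalar $\beta \in k\llbracket h \rrbracket$ such that $Z_{U_h}(\mathcal{K}) v = \beta v$ for every $v \in V_h(1, n-1)$. In particular $Z_{U_h}(\mathcal{K}) v_0 = \beta v_0$, so the task reduces to identifying $\beta$ with $J_{\mathcal{K}}^n(e^h)$.

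To make this identification, I would use the formula $RT_L(V_1, \ldots, V_n) = \sigma_* \circ (\rho_{V_1} \otimes \cdots \otimes \rho_{V_n})(Z_A(L))$ specialised to the one-component long knot $\mathcal{K}$, whose induced permutation is trivial. This gives $RT_{V_{n-1}}(\mathcal{K}) = \rho_{V_{n-1}}(Z_{U_h}(\mathcal{K})) = \beta \cdot \mathrm{Id}_{V_{n-1}}$. Taking the quantum trace with respect to the balancing element $q^H$ then yields
\[
\mathrm{tr}_q^{V_{n-1}}(RT_{V_{n-1}}(\mathcal{K})) = \beta \cdot \mathrm{tr}^{V_{n-1}}(q^H) = \beta \cdot [n]_q,
\]
where the second equality follows from a direct calculation on the weight basis described in~\eqref{eq:vermah}: the eigenvalues of $H$ on the simple quotient $V_h(1,n-1)$ are $(n-1)-2p$ for $p = 0, \ldots, n-1$, so that $\mathrm{tr}(q^H) = \sum_{p=0}^{n-1} q^{(n-1)-2p} = [n]_q$.

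Comparing with the definition $[n]_q J_{\mathcal{K}}^n(q) = \mathrm{tr}_q^{V_{n-1}} RT_{V_{n-1}}(\mathcal{K})$ and noting that $[n]_q$ is invertible in $k\llbracket h \rrbracket$ (its constant term is the nonzero integer $n$), cancelling gives $\beta = J_{\mathcal{K}}^n(e^h)$, which is the claimed equality. The main (minor) obstacle is the bookkeeping needed to pass between the $U_q$-module $V_{n-1} = V_q(1,n-1)$ over $k(q^{1/2})$ and the $U_h$-module $V_h(1,n-1)$ over $k\llbracket h \rrbracket$; this is resolved by observing that the action formulas in~\eqref{eq:verma} and~\eqref{eq:vermah} agree under the substitution $q = e^h$, and that both the universal $R$-matrix~\eqref{eq:rmatrix} and the balancing element $q^H$ are given by the same formal expressions in either framework, so that the RT invariants computed in the two settings produce the same element of $k\llbracket h \rrbracket$ as required.
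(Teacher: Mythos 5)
Your proposal is correct and follows essentially the same route as the paper: centrality of $Z_{U_h}(\mathcal{K})$ gives a scalar action on $V_h(1,n-1)$, the quantum trace is computed on the weight basis to produce the factor $\sum_{p=0}^{n-1} q^{(n-1)-2p} = [n]_q$, and comparison with the definition of the colored Jones polynomial identifies the scalar. Your added remarks on the invertibility of $[n]_q$ and on passing between the $U_q$ and $U_h$ settings are just minor bookkeeping that the paper leaves implicit.
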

\begin{proof}
  Since $Z_{U_h}$ lies in the center of $U_h$, there exists a $\lambda \in k \llbracket h \rrbracket$ such that $Z_{U_h}v_i = \lambda v_i$ for all $i = 0,\dots,n-1$. Set $q = e^h$. By definition, we have
  \begin{align*}
    \mathrm{tr}_q^{V_h(1,n-1)}RT_{V_h(1,n-1)}(\mathcal{K}) = \sum_{i=0}^{n-1} q^H Z_{U_h}v_i = \lambda \sum_{i=0}^{n-1} q^{n-1-2i}.
  \end{align*}
  On the other hand, we have
  \begin{align*}
    [n]_q = \frac{q^n-q^{-n}}{q-q^{-1}} = \frac{1}{q} \frac{q^n - q^{-n}}{1-q^{-2}} = \frac{1}{q} (q^n-q^{-n})(1+q^{-2} + q^{-4} +\dots) =\sum_{i=0}^{n-1} q^{n-1-2i}.
  \end{align*}
  Thus $\lambda = J_{\mathcal{K}}^n(e^h)$.
\end{proof}
This suggests the following definition.

\begin{definition}
  Let $m_0 \in M_h(\lambda)$ be the highest weight vector. The \textit{colored Jones function} $\mathbf{J}_{\mathcal{K}}(\lambda,h)$ is defined as
  \[
\mathbf{J}_{\mathcal{K}}(\lambda,h) m_0:= Z_{U_{h}}(\mathcal{K})m_0.
    \]
\end{definition}
The colored Jones function can be viewed as an analytic continuation of the colored Jones polynomial. In \cite{willetts2022unification} this has been related to the ADO polynomials.
\begin{proposition}
  \label{pro:polyfunction}
  The following equality holds:
  \[
  J_{\mathcal{K}}^n(e^h) = \mathbf{J}_{\mathcal{K}}(n-1, h).
    \]
\end{proposition}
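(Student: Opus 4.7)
The plan is to identify $V_h(1,n-1)$ as a quotient of the Verma module $M_h(n-1)$ and to exploit the centrality of $Z_{U_h}(\mathcal{K})$ together with the two eigenvalue identifications already established, namely Proposition~\ref{pro:cjuniversal} and the definition of $\mathbf{J}_{\mathcal{K}}$.

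First, I would specialize the parameter $\lambda$ in $M_h(\lambda)$ to $\lambda = n-1$. Comparing the actions~\eqref{eq:vermah} on $M_h(n-1)$ with those on the simple module $V_h(1,n-1)$ recalled in Section~\ref{sec:uqsl2} (where $K v_p = q^{n-1-2p} v_p$, $E v_p = [p]_q v_{p-1}$, $F v_{p-1} = [n-p-1]_q v_p$), one checks that the assignment $m_p \mapsto v_p$ for $0 \leq p \leq n-1$ and $m_p \mapsto 0$ for $p \geq n$ defines a surjective $U_h$-module map $\pi \colon M_h(n-1) \twoheadrightarrow V_h(1,n-1)$, sending the highest weight vector $m_0$ of the Verma module to the highest weight vector $v_0$ of the simple module. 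The key point is that the coefficient $\frac{q^{-p+\lambda}-q^{p-\lambda}}{q-q^{-1}}$ in the $F$-action of~\eqref{eq:vermah} specializes, for $p = n-1$ and $\lambda = n-1$, to $[0]_q = 0$, so the submodule spanned by $\{m_p : p \geq n\}$ is indeed killed.

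Next, by Theorem~\ref{thm:knotcenter} the element $Z_{U_h}(\mathcal{K})$ lies in the center of $U_h$, so it commutes with every module homomorphism. Applying $Z_{U_h}(\mathcal{K})$ to $m_0$ and then projecting gives
\[
\pi\bigl(Z_{U_h}(\mathcal{K}) m_0\bigr) = \pi\bigl(\mathbf{J}_{\mathcal{K}}(n-1,h)\, m_0\bigr) = \mathbf{J}_{\mathcal{K}}(n-1,h)\, v_0,
\]
where the first equality is the defining relation of the colored Jones function. On the other hand, commuting $Z_{U_h}(\mathcal{K})$ past $\pi$ and then using Proposition~\ref{pro:cjuniversal} yields
\[
\pi\bigl(Z_{U_h}(\mathcal{K}) m_0\bigr) = Z_{U_h}(\mathcal{K}) v_0 = J_{\mathcal{K}}^{n}(e^h)\, v_0.
\]
Since $v_0 \neq 0$, comparing the two expressions forces $\mathbf{J}_{\mathcal{K}}(n-1,h) = J_{\mathcal{K}}^{n}(e^h)$, which is the claim.

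The only delicate point is verifying that $\pi$ is a well-defined module map; this is a direct computation with~\eqref{eq:vermah} and the explicit $U_h$-action on $V_h(1,n-1)$, with the vanishing of $[0]_q$ being exactly what makes the Verma module reducible at integer highest weight. Everything else is formal consequence of centrality and the fact that $Z_{U_h}(\mathcal{K})$ acts by a scalar on each of the two modules, so no further obstacle is expected.
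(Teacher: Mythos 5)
Your overall strategy --- specialize to $\lambda=n-1$, relate $M_h(n-1)$ to $V_h(1,n-1)$ by a module map sending $m_0$ to $v_0$, and let centrality plus the two scalar characterizations do the rest --- is exactly the idea behind the paper's (one-sentence) proof. But the specific step you flag as ``the only delicate point'' is in fact where your argument fails: with the \emph{rescaled} actions~\eqref{eq:vermah} that the paper uses, the assignment $m_p\mapsto v_p$ ($p\le n-1$), $m_p\mapsto 0$ ($p\ge n$) is \emph{not} a module map, because $E m_n=[n]_q\,m_{n-1}$ is nonzero while $E\,\pi(m_n)=0$. Equivalently, $\operatorname{span}\{m_p: p\ge n\}$ is not a submodule of $M_h(n-1)$ in this basis: the vanishing $F m_{n-1}=\frac{q^{0}-q^{0}}{q-q^{-1}}m_n=0$ prevents you from entering that span via $F$, but it does nothing to stop $E$ from leaving it. You have the reducibility pointing in the wrong direction. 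The quotient picture you describe is the one attached to the \emph{unrescaled} Verma actions (the first display of~\eqref{eq:verma}, where $Ev_n=0$ at integer highest weight); the paper's rescaling deliberately flips this so that the finite-dimensional piece becomes a \emph{sub}module.

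The repair is immediate and recovers the paper's argument: since $F m_{n-1}=0$, $E$ lowers the index, and $H$ preserves it, the span of $\{m_0,\dots,m_{n-1}\}$ is an $n$-dimensional submodule of $M_h(n-1)$ isomorphic to $V_h(1,n-1)$, with $m_0$ identified with $v_0$. Then $Z_{U_h}(\mathcal{K})$ acts on this single vector, and its eigenvalue is $\mathbf{J}_{\mathcal{K}}(n-1,h)$ by definition and $J^n_{\mathcal{K}}(e^h)$ by Proposition~\ref{pro:cjuniversal}, giving the claim. So the conclusion and the skeleton of your proof are sound; only the direction of the module map (quotient versus inclusion) needs to be reversed to match the conventions actually in force.
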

\begin{proof}
By setting $\lambda = n-1$ in the module $M_h(\lambda)$ one obtains a module, which contains a simple submodule that is equal $V_h(1,n-1)$. This simple submodule contains the the highest weight vector $m_0$, which is labeled as $v_0 \in V_h(1,n-1)$.
\end{proof}

\subsection{The Drinfeld double $\mathfrak{D}_{\mathfrak{sl}_2}$}
In the previous subsection, we described the standard approach to $U_h(\mathfrak{sl}_2)$. The algebra $U_h(\mathfrak{sl}_2)$ is commonly constucted from the Drinfeld double by quotienting out an ideal. This subsection diverges by retaining this ideal, granting us an additional central element, which shall be of use in the forthcoming sections.

The topological Drinfeld double $\mathfrak{D}_{\mathfrak{sl}_2}$ is a topological algebra over $k\llbracket h \rrbracket$ with generators

\begin{align*}
[H,E]=2E && [\widetilde{H},E]= -2E \\
  [H,F]=-2F && [\widetilde{H},F]= 2F \\
[E,F] = \frac{q^{H} - q^{\widetilde{H}}}{q-q^{-1}} && [H,\widetilde{H}] = 0,
\end{align*}
and an $R$-matrix and balancing element
\begin{align*}
  R_{\mathfrak{D}_{\mathfrak{sl}_2}}&=\sum_{n=0}^{\infty} \frac{(q - q)^{n}}{[n]_{q}!} q^{\frac{-H \otimes \widetilde{H}}{2} + \frac{n(n-1)}{2}} E^{n} \otimes F^{n}, \qquad \kappa = q^{\frac{H}{2} - \frac{\widetilde{H}}{2}}.
  \end{align*}
  Based on the information provided, it is possible to construct a knot invariant, by using the definition of the universal invariant as introduced in Section~\ref{sec:theunivinvar}. As opposed to $U_h(\mathfrak{sl}_2)$, we now have a new central element at our disposal, given by
\[
   c := -\frac{H + \widetilde{H}}{2}.
\]
It will be useful to introduce a new set of generators given by
  \[
H' = H +  c = -\widetilde{H} -c \qquad E'= E \qquad F' = e^{h c}F.
    \]
There exists a projection
\[
 p \colon \mathfrak{D}_{\mathfrak{sl}_{2}} \rightarrow U_{h}(\mathfrak{sl}_{2}),
  \]
  given by
  \begin{align*}
    H' \mapsto \hat{H}, \quad E' \mapsto \hat{E}, \quad F' \mapsto \hat{F}, \quad c \mapsto 0.
  \end{align*}
This gives rise to an isomorphism
\[
  \psi \colon \mathfrak{D}_{\mathfrak{sl}_{2}} / \left\langle c \right\rangle \xrightarrow{\sim} U_{h}(\mathfrak{sl}_{2}).
  \]
Here, we note that $\psi(R_{\mathfrak{D}_{\mathfrak{sl}_{2}}}) = R_{\mathfrak{sl}_{2}}$.
Thus, we say that the universal invariant $Z_{\mathfrak{D}_{\mathfrak{sl}_2}}$ dominates universal quantum $\mathfrak{sl}_2$ invariant, and hence all colored Jones polynomials.

In fact, $U_{h}(\mathfrak{sl}_{2})$ is contained in $\mathfrak{D}_{\mathfrak{sl}_{2}}$ as a subalgebra under the map
\[
  i \colon U_{h}(\mathfrak{sl}_{2}) \to \mathfrak{D}_{\mathfrak{sl}_{2}},
  \]
defined by
\[
\hat{H} \mapsto H', \quad \hat{E} \mapsto E' \quad \hat{F} \mapsto F'.
\]
Define
\begin{align}
  \label{eq:rcheck}
\check{R}_{\mathfrak{D}_{\mathfrak{sl}_2}} := \sum_{n=0}^{\infty} \frac{(q - q^{-1})^{n}}{[n]_{q} !} q^{\frac{H' \otimes H'}{2}  + \frac{n(n-1)}{2}} E'^{n} \otimes F'^{n} = i(R_{\mathfrak{sl}_{2}}).
\end{align}
This element can be related to the universal $\mathfrak{D}_{\mathfrak{sl}_2}$ $R$-matrix, as is illustrated by the following.
\begin{proposition}
  \label{pro:rtwistedconj}
  The following identity holds
\[
 q^{\frac{1} 2 c \otimes c} q^{\frac1 2 c \otimes H'} R_{\mathfrak{D}_{\mathfrak{sl}_2}} q^{- \frac1 2 H' \otimes c}= \check{R}_{\mathfrak{D}_{\mathfrak{sl}_2}}.
 \]
\end{proposition}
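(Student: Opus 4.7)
The plan is to substitute the explicit power series for $R_{\mathfrak{D}_{\mathfrak{sl}_2}}$ and $\check{R}_{\mathfrak{D}_{\mathfrak{sl}_2}}$ and reduce the desired identity, termwise in $n$, to a purely Cartan statement. Only one step involves non-Cartan manipulation: moving the rightmost factor $q^{-H' \otimes c/2}$ past $E^{n} \otimes F^{n}$. Once this has been carried out, every remaining factor on both sides lies in the commutative subalgebra generated by $H$, $\widetilde{H}$ and the central element $c$, and the identity reduces to an equality of exponents.

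For the key commutation I would compute $q^{H \otimes c/2}\,(E^{n} \otimes F^{n})\,q^{-H \otimes c/2}$ via the formula $e^{X} Y e^{-X} = \sum_{k \geq 0} \tfrac{1}{k!}\operatorname{ad}(X)^{k} Y$. Centrality of $c$ in $\mathfrak{D}_{\mathfrak{sl}_{2}}$ gives
\[
[H \otimes c,\, E^{n} \otimes F^{n}] = [H,E^{n}] \otimes c F^{n} = 2n\,(E^{n} \otimes F^{n})(1 \otimes c),
\]
and iterating yields $\operatorname{ad}(H \otimes c)^{k}(E^{n} \otimes F^{n}) = (2n)^{k}(E^{n} \otimes F^{n})(1 \otimes c^{k})$. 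Summing the exponential series and invoking $F' = e^{hc} F$ (again using centrality of $c$) gives
\[
q^{H \otimes c/2}\,(E^{n} \otimes F^{n})\,q^{-H \otimes c/2} = E^{n} \otimes e^{hnc} F^{n} = E'^{n} \otimes F'^{n}.
\]
Since $H' = H + c$ and $q^{\pm c \otimes c/2}$ is central, this upgrades to the transport rule $(E^{n} \otimes F^{n})\,q^{-H' \otimes c/2} = q^{-H' \otimes c/2}\,(E'^{n} \otimes F'^{n})$.

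With this rule in hand, the $n$-th summand of the left-hand side becomes a product of commuting Cartan/central factors applied to $E'^{n} \otimes F'^{n}$, and this product equals $q^{A/2}$ where
\[
A = c \otimes c \;+\; c \otimes H' \;-\; H \otimes \widetilde{H} \;-\; H' \otimes c.
\]
Using $H' = H + c$ together with $\widetilde{H} = -H - 2c$, which follows from the two equivalent expressions $H' = H + c = -\widetilde{H} - c$, a direct expansion gives
\[
A = H \otimes H + H \otimes c + c \otimes H + c \otimes c = H' \otimes H'.
\]
Hence $q^{c \otimes c/2}\, q^{c \otimes H'/2}\, q^{-H \otimes \widetilde{H}/2}\, q^{-H' \otimes c/2} = q^{H' \otimes H'/2}$, and each summand on the left matches the corresponding summand of $\check{R}_{\mathfrak{D}_{\mathfrak{sl}_2}}$. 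The main obstacle is purely notational: keeping track of the order of Cartan/central factors relative to $E^{n} \otimes F^{n}$, since the only genuinely non-commutative step is the single conjugation by $q^{H \otimes c/2}$, whereas the rest is bookkeeping in the commutative Cartan-central subalgebra.
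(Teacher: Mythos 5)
Your proposal is correct and follows essentially the same route as the paper: rewrite $-H\otimes\widetilde H$ in terms of $H'$ and $c$ using $\widetilde H=-H-2c$, conjugate $E^{n}\otimes F^{n}$ by the Cartan factor $q^{H'\otimes c/2}$ (equivalently $q^{H\otimes c/2}$, since $q^{c\otimes c/2}$ is central) to produce $E'^{\,n}\otimes F'^{\,n}=E^{n}\otimes e^{hnc}F^{n}$, and then check that the remaining commuting exponents sum to $H'\otimes H'$. The paper performs the same conjugation at the level of $E\otimes F$ and absorbs it into a factorization of $R_{\mathfrak{D}_{\mathfrak{sl}_2}}$, but the computation is identical to yours.
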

\begin{proof}
  Note that $q^{\frac1 2 H' \otimes c}(E \otimes F) q^{-\frac1 2 H' \otimes c} = (E\otimes e^{h c} F)$. From this, we find
  \begin{align*}
   & R_{\mathfrak{D}_{\mathfrak{sl}_2}} \\
    &=\sum_{n=0}^{\infty} \frac{(q - q^{-1})^{n}}{[n]_{q} !} q^{\frac{H \otimes (H+2c)}{2} + \frac{n(n-1)}{2}} E^{n} \otimes F^{n} \\
    &=\sum_{n=0}^{\infty} \frac{(q - q^{-1})^{n}}{[n]_{q} !} q^{\frac{(H'-  c) \otimes (H'+c)}{2} + \frac{n(n-1)}{2}} E^{n} \otimes F^{n} \\
 &= q^{-\frac1 2 c \otimes c}\sum_{n=0}^{\infty} \frac{(q - q^{-1})^{n}}{[n]_{q} !} q^{\frac{H' \otimes H'}{2} +\frac1 2 H'\otimes c -\frac1 2  c \otimes H' + \frac{n(n-1)}{2}} E^{n} \otimes F^{n} \\
 &= q^{-\frac1 2 c \otimes c} q^{ -\frac1 2   c \otimes H'} \left( \sum_{n=0}^{\infty} \frac{(q - q^{-1})^{n}}{[n]_q !} q^{\frac{H' \otimes H'}{2}  + \frac{n(n-1)}{2}} E^{n} \otimes (e^{h c}F)^{n} \right) q^{\frac1 2 H' \otimes c}. \qedhere
  \end{align*}
\end{proof}
\begin{notation}
We extend the map $\iota_{\alpha} \colon U_h \to U_h$ from Notation~\ref{not:iota} to a map $\tilde{\iota}_{\alpha} \colon \mathfrak{D}_{\mathfrak{sl}_2} \to \mathfrak{D}_{\mathfrak{sl}_2}$ given by
\[
E' \mapsto q^{-H' (\alpha -1)/2} F' \qquad F' \mapsto E' q^{H'(\alpha-1)/2}\qquad H' \mapsto -H' \qquad h \mapsto \alpha h \qquad c \mapsto c.
  \]
\end{notation}
Given the fact that $\mathfrak{D}_{\mathfrak{sl}_{2}}$ contains $U_{h}(\mathfrak{sl}_{2})$ as  subalgebra, it is rather straightforward to set-up a simple module for $\mathfrak{D}_{\mathfrak{sl}_2}$, where $c$ acts as a scalar. This is done by following the approach of Section~\ref{sec:uqsl2}.
\begin{proposition}
  \label{pro:Dmodule}
 Let $ \mu \in k \llbracket h \rrbracket$. There exists a $\mathfrak{D}_{\mathfrak{sl}_2}$-module, denoted by $\widetilde{M}_h(\lambda,\mu)$, with basis $\{m_i\}_{i \in \mathbb{N}}$, upon which $\mathfrak{D}_{\mathfrak{sl}_2}$ acts by
\begin{align*}
  H' m_{p}&=(\lambda-2p) m_{p}, \quad  c m_p = \mu m_p \\ E' m_{p+1}&=[p+1]_qm_{p}, \quad F' m_{p}= \frac{q^{-p+\lambda} -q^{p- \lambda} }{q-q^{-1}}  m_{p+1} \\
  E' m_0 &= 0.
\end{align*}
Moreover, the simple $U_q$-module of dimension $n+1$, denoted by $V_n$, can be upgraded to a $\mathfrak{D}_{\mathfrak{sl}_2}$-module spanned by the same vector $v_0,\dots,v_{n}$, , upon which $\mathfrak{D}_{\mathfrak{sl}_2}$ acts by
\begin{align*}
  H' v_p = (n-2p) v_p \qquad E' v_p = [p]_{q} v_{p-1} \qquad F' v_{p-1} = [n-p-1]_q v_p \qquad c v_{p} = \mu v_p.
\end{align*}
  This module is denoted by $\widetilde{V}_h(n,\mu)$.
\end{proposition}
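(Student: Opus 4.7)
The plan is to use the fact that $\mathfrak{D}_{\mathfrak{sl}_2}$ is generated by the image $i(U_h(\mathfrak{sl}_2))$ together with the central element $c$, and then transfer the known $U_h(\mathfrak{sl}_2)$-module structures (the Verma module of \eqref{eq:vermah} and the finite-dimensional simple module of Section~\ref{sec:uqsl2}) by having $c$ act as the scalar $\mu$.

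First, I would verify that $c = -(H+\widetilde{H})/2$ is central in $\mathfrak{D}_{\mathfrak{sl}_2}$. Since $[H,\widetilde{H}]=0$ by the defining relations, it suffices to check $[c,E]=[c,F]=0$, and these follow immediately from $[H,E]+[\widetilde{H},E] = 2E - 2E = 0$ together with the analogous identity for $F$.

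Next, I would rewrite the defining relations in the new generators $H',E',F',c$. Substituting $H = H'-c$, $\widetilde{H} = -H'-c$, and $F = e^{-hc}F'$, a direct computation yields
\[
[H',E']=2E', \qquad [H',F']=-2F', \qquad [E',F'] = \frac{q^{H'}-q^{-H'}}{q-q^{-1}},
\]
together with the centrality of $c$. The only mildly nontrivial step is the $[E,F]$ relation: writing $[E,F] = e^{-hc}[E',F']$ using centrality of $c$, and observing that $(q^{H}-q^{\widetilde{H}})/(q-q^{-1}) = e^{-hc}(q^{H'}-q^{-H'})/(q-q^{-1})$, the factor $e^{-hc}$ cancels on both sides. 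Hence $i(U_h(\mathfrak{sl}_2))$ and $c$ generate $\mathfrak{D}_{\mathfrak{sl}_2}$ with no further relations between them.

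Given this, any $U_h(\mathfrak{sl}_2)$-module $V$ extends to a $\mathfrak{D}_{\mathfrak{sl}_2}$-module by letting $H',E',F'$ act via $i$ and $c$ act as $\mu \cdot \mathrm{Id}$. Applying this construction to the Verma module $M_h(\lambda)$ produces $\widetilde{M}_h(\lambda,\mu)$ with the actions claimed in the proposition; applying it to $V_h(1,n)$ produces $\widetilde{V}_h(n,\mu)$. I do not expect any serious obstacle: the content of the proposition is essentially a change-of-generators observation, and the only bookkeeping required is keeping track of the rescaling between $F$ and $F'$, which is trivial since $c$ acts as a scalar.
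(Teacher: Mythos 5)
Your proposal is correct and follows essentially the same route as the paper, which gives no explicit proof but states just before the proposition that the modules are obtained ``straightforwardly'' from the embedding $i\colon U_h(\mathfrak{sl}_2)\to\mathfrak{D}_{\mathfrak{sl}_2}$ by letting $c$ act as a scalar --- exactly the construction you carry out, with the relation checks in the primed generators filled in. (Your aside that there are ``no further relations'' is stronger than needed and not justified, but it is also not required: to define the module it suffices that the defining relations hold under the proposed action, which your computation establishes.)
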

Define the knot invariant $\check{Z}_{\mathfrak{D}_{\mathfrak{sl}_2}}$ using the universal $R$-matrix $\check{R}$ as in \eqref{eq:rcheck}. From the fact that $\mathfrak{D}_{\mathfrak{sl}_2}$ contains $U_h(\mathfrak{sl}_2)$ as a subalgebra, it is clear that the universal $\mathfrak{D}_{\mathfrak{sl}_2}$ invariant, acting on a highest weight vector of $\widetilde{M}_h(\lambda,\mu)$, reduces to the colored Jones function. This is exemplified by the following proposition.
\begin{proposition}
  \label{pro:JKZD}
    Let $\mathcal{K}$ be a $0$-framed long knot and let $m_0 \in \widetilde{M}_h(\lambda,\mu)$ be the highest weight vector. The following equalities hold:
  \[
    \mathbf{J}_{\mathcal{K}}(\lambda,e^h) m_0 = \check{Z}_{\mathfrak{D}_{\mathfrak{sl}_2}}(\mathcal{K}) m_{0} = Z_{\mathfrak{D}_{\mathfrak{sl}_2}}(\mathcal{K}) m_{0}.
    \]
\end{proposition}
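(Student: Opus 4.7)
The plan is to establish the two equalities separately. For $\check{Z}_{\mathfrak{D}_{\mathfrak{sl}_2}}(\mathcal{K}) m_0 = \mathbf{J}_{\mathcal{K}}(\lambda, e^h) m_0$, I would first observe that by definition (equation~\eqref{eq:rcheck}) $\check{R}_{\mathfrak{D}_{\mathfrak{sl}_2}} = i(R_{\mathfrak{sl}_2})$, while a direct substitution using $H = H' - c$ and $\widetilde{H} = -H' - c$ gives $\kappa = q^{(H-\widetilde{H})/2} = q^{H'} = i(q^{H})$. Thus the XC-structure $(\mathfrak{D}_{\mathfrak{sl}_2}, \check{R}, \kappa)$ is the image of $(U_h, R, q^H)$ under the embedding $i$, and consequently $\check{Z}_{\mathfrak{D}_{\mathfrak{sl}_2}}(\mathcal{K}) = i(Z_{U_h}(\mathcal{K}))$. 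Comparing~\eqref{eq:vermah} with Proposition~\ref{pro:Dmodule}, the pullback of the $\mathfrak{D}$-module $\widetilde{M}_h(\lambda,\mu)$ along $i$ makes $m_0$ a highest-weight vector of weight $\lambda$, so the action of $i(Z_{U_h}(\mathcal{K}))$ on $m_0$ matches the action of $Z_{U_h}(\mathcal{K})$ on the highest-weight vector of $M_h(\lambda)$, which by definition equals $\mathbf{J}_{\mathcal{K}}(\lambda, e^h) m_0$.

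For the second equality $\check{Z}_{\mathfrak{D}_{\mathfrak{sl}_2}}(\mathcal{K}) m_0 = Z_{\mathfrak{D}_{\mathfrak{sl}_2}}(\mathcal{K}) m_0$, I would invoke Proposition~\ref{pro:rtwistedconj} after passing to the quotient $\mathfrak{D}_\mu := \mathfrak{D}_{\mathfrak{sl}_2}/(c - \mu)$, through which the $\widetilde{M}_h(\lambda, \mu)$-action factors. In $\mathfrak{D}_\mu$, the proposition takes the form
\[
\check{R} = q^{\mu^2/2} \, (1 \otimes \varphi^{-1}) \, R \, (\varphi \otimes 1), \qquad \varphi := q^{-\mu H'/2},
\]
and I would verify that $\varphi$ is a twisting element of $\mathfrak{D}_\mu$ in the sense of Definition~\ref{def:conjugating}: it manifestly commutes with $\kappa = q^{H'}$, and $[\varphi \otimes \varphi, R] = 0$ follows because $H' \otimes 1 + 1 \otimes H'$ is weight-zero on each summand $E^n \otimes F^n$ of $R$. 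Then, invoking the twisting technology of Section~\ref{sec:twisted} (Theorem~\ref{thm:twistedconj} and Corollary~\ref{cor:equalknotinvariants}), the universal invariant of the long knot $\mathcal{K}$ computed with the pure twist $(1 \otimes \varphi^{-1}) R (\varphi \otimes 1)$ coincides with $Z_{\mathfrak{D}_\mu}(\mathcal{K})$. The remaining central prefactor $q^{\mu^2/2}$ (resp.\ $q^{-\mu^2/2}$) attached to each positive (resp.\ negative) crossing aggregates to the overall scalar $q^{\mu^2 \cdot \mathrm{writhe}(\mathcal{K})/2}$, which equals $1$ because $\mathcal{K}$ is $0$-framed. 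Applying the resulting operator identity to $m_0$ yields the desired equality.

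The main obstacle is conceptual: the natural candidate $\varphi = q^{-c H'/2} \in \mathfrak{D}_{\mathfrak{sl}_2}$ fails to satisfy $[\varphi \otimes \varphi, R] = 0$ at the level of the full algebra, as the obstruction is proportional to $(c \otimes 1 - 1 \otimes c)(E^n \otimes F^n)$, which vanishes only once $c$ is treated as a scalar. One therefore cannot apply Corollary~\ref{cor:equalknotinvariants} directly inside $\mathfrak{D}_{\mathfrak{sl}_2}$; the reduction to $\mathfrak{D}_\mu$ (equivalently, the restriction to operators on $\widetilde{M}_h(\lambda,\mu)$) is essential, and the $0$-framing hypothesis is precisely what kills the residual central factor collected from the crossings.
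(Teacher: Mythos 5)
Your proposal is correct and follows essentially the same route as the paper: the first equality via the embedding $i$ carrying the XC-data $(R_{\mathfrak{sl}_2},q^{\hat H})$ to $(\check R_{\mathfrak{D}_{\mathfrak{sl}_2}},\kappa)$, and the second via Proposition~\ref{pro:rtwistedconj} combined with the twisting machinery of Section~\ref{sec:twisted}, with the scalar $q^{\mu^2/2}$ killed by the $0$-framing. Your added observation that $q^{-cH'/2}$ is only a genuine twisting element after $c$ is specialised to the scalar $\mu$ (i.e.\ on the module, or in the quotient $\mathfrak{D}_{\mathfrak{sl}_2}/(c-\mu)$) is a correct and worthwhile clarification of a step the paper performs only implicitly by evaluating $c$ on $m_i$ before introducing $\varphi$.
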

\begin{proof}
  Recall the homomorphism $i \colon U_h(\mathfrak{sl}_2) \to \mathfrak{D}_{\mathfrak{sl}_{2}}$. Since $\check{R}_{\mathfrak{D}_{\mathfrak{sl}_2}} = i(R_{\mathfrak{sl}_{2}})$ and $\kappa =i(q^{\hat{H}}) $, the first equality follows from Proposition~\ref{pro:polyfunction}.

  Note that $q^{\frac{1}{2} c \otimes H'} m_i = q^{\frac{1}{2} \mu \otimes H'} m_i$ and  $q^{- \frac{1}{2} H' \otimes c} m_i = q^{-\frac{1}{2} H' \otimes \mu} m_i$ for any $i \in \mathbb{N}$.
We write $\varphi = q^{\frac1 2 H \mu}$. This satisfies the definition of a twisting element.
  Thus, in the expression $\check{Z}_{\mathfrak{D}_{\mathfrak{sl}_2}}(\mathcal{K}) m_0$, the $R$-matrix $\check{R}_{\mathfrak{D}_{\mathfrak{sl}_2}}$ is related to $R_{\mathfrak{D}_{\mathfrak{sl}_2}}$ by twisting and a factor $q^{\frac1 2 \mu^2}$. The factor vanishes due to the fact that $\mathcal{K}$ is a 0-framed knot. The second equality now follows with the aid of Corollary~\ref{cor:equalknotinvariants} and Theorem~\ref{thm:knotcenter}.
\end{proof}
\subsection{The algebra $\mathbb{D}$}
Both the algebras $U_h$ and $\mathfrak{D}_{\mathfrak{sl}_2}$ are far too large to allow for a computational efficient universal invariant of knots. To simplify quantum invariants without passing to representations, it was proposed in \cite{bar2021perturbed} to simplify the underlying algebra itself. This can be achieved by setting the field $k = \mathbb{Q} (\epsilon)$ and performing a change of basis. The introduction of a cut-off variable $\epsilon$ allows us to perform computations up to a specified order in $\epsilon$.
\begin{notation}
Let us define the \textit{modified quantum integers} as
  \[
    \{n\}_{q}:=\frac{1-q^{n}}{1-q}.
    \]
    It is readily shown that these modified quantum integers are related to the quantum integer $[n]_q$ via
    \[
      [n]_{q} = q^{-n +1}\{n\}_{q^2}.
      \]
      Moreover, set
      \[
        q_{\epsilon} = \exp(\epsilon h).
        \]
\end{notation}
\begin{definition}
  \label{def:D}
The algebra $\mathbb{D}$ over $\mathbb{Q}(\epsilon)$ is $h$-adically generated by $\mathbf{y},\mathbf{b},\mathbf{a},\mathbf{x}$ with relations
\begin{align*}
&  \mathbf{x y}=q_{\epsilon} \mathbf{y} \mathbf{x}+\frac{\mathbf{1}-\mathbf{A B}}{h} \quad[\mathbf{a}, \mathbf{x}]=\mathbf{x} \quad[\mathbf{b}, \mathbf{x}]=\epsilon \mathbf{x} \\
&  \quad[\mathbf{a}, \mathbf{y}]=-\mathbf{y} \quad[\mathbf{b}, \mathbf{y}]=-\epsilon \mathbf{y} \quad[\mathbf{a}, \mathbf{b}]=0.
\end{align*}
  Alternatively, the algebra can be described using generators $\mathbf{y},\mathbf{t},\mathbf{a},\mathbf{x}$, where we set $\mathbf{t} = \mathbf{b}- \epsilon \mathbf{a}$.
\end{definition}

\begin{theorem}
  The algebra $\mathbb{D}$ can be endowed with a structure of a topological ribbon Hopf algebra over $\mathbb{Q}(\epsilon) \llbracket h \rrbracket $, whereby
\[ \begin{array}{lll}%
	\Delta(\mathbf{x})=1 \otimes \mathbf{x}  +\mathbf{x}\otimes  \mathbf{A}  &  \epsilon(\mathbf{x})=0  & S(\mathbf{x})=-\mathbf{A}^{-1} \mathbf{x} \\
	\Delta(\mathbf{y}) =  1 \otimes \mathbf{y} + \mathbf{y} \otimes \mathbf{B}   &  \epsilon(\mathbf{y})=0 & S(\mathbf{y})=-\mathbf{y} \mathbf{B}^{-1}  \\
	\Delta(\mathbf{a})=\mathbf{a} \otimes 1 + 1 \otimes \mathbf{a}  &  \epsilon(\mathbf{a})=0  &  S(\mathbf{a}) = -\mathbf{a} \\
	\Delta(\mathbf{b})=\mathbf{b} \otimes 1 + 1 \otimes \mathbf{b}  &  \epsilon(\mathbf{a})=0  &  S(\mathbf{b}) = -\mathbf{b},
\end{array}\]
with
\[
    \mathbf{A} := \exp(-\epsilon h \mathbf{a}) \qquad \mathbf{B}:= \exp(-h \mathbf{b}).
  \]
The universal $R$-matrix and the balancing element are given by
  \[
  \mathbf{R}=\sum_{m, n=0}^{\infty} \frac{h^{m+n}}{\{m\}_{q_{\epsilon}} ! n !} \mathbf{y}^{m} \mathbf{b}^{n} \otimes \mathbf{a}^{n} \mathbf{x}^{m} \qquad \mathbf{\kappa}=(\mathbf{A B})^{\frac{1}{2}}
  \]
    where we take the positive square root in the sense of power series in $h$.
\end{theorem}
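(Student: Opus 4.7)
The plan is to verify each of the ribbon Hopf algebra axioms on the generators $\mathbf{a},\mathbf{b},\mathbf{x},\mathbf{y}$ and extend by multiplicativity and $h$-adic continuity. First I would check that the prescribed coproducts define an algebra homomorphism $\Delta \colon \mathbb{D} \to \mathbb{D} \otimes \mathbb{D}$ by verifying that each defining relation of $\mathbb{D}$ is preserved. The commutation relations among the Cartan-type generators $\mathbf{a},\mathbf{b}$ and the relations $[\mathbf{a},\mathbf{x}] = \mathbf{x}$, $[\mathbf{b},\mathbf{x}] = \epsilon \mathbf{x}$, etc., are straightforward since $\mathbf{a},\mathbf{b}$ are primitive. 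The crucial relation $\mathbf{xy} = q_{\epsilon}\mathbf{yx} + (\mathbf{1}-\mathbf{AB})/h$ requires expanding $\Delta(\mathbf{x})\Delta(\mathbf{y}) - q_{\epsilon}\Delta(\mathbf{y})\Delta(\mathbf{x})$, using the commutation of $\mathbf{A},\mathbf{B}$ with $\mathbf{x},\mathbf{y}$ (which produces precisely the factor $q_{\epsilon}$) to simplify the cross-terms, and matching the result with $(1 \otimes 1 - \Delta(\mathbf{A})\Delta(\mathbf{B}))/h$. The counit and antipode axioms then reduce to pointwise checks on generators; for instance, $\mu \circ (S \otimes \mathrm{id}) \circ \Delta$ applied to $\mathbf{x}$ yields $\mathbf{x} - \mathbf{A}^{-1}\mathbf{x}\mathbf{A}$, which vanishes by $[\mathbf{a},\mathbf{x}] = \mathbf{x}$.

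For the quasi-triangularity, I would verify the three conditions $(\Delta \otimes \mathrm{id})(\mathbf{R}) = \mathbf{R}_{13}\mathbf{R}_{23}$, $(\mathrm{id} \otimes \Delta)(\mathbf{R}) = \mathbf{R}_{13}\mathbf{R}_{12}$, and $\mathbf{R}\Delta(\zeta) = \Delta^{\mathrm{op}}(\zeta)\mathbf{R}$ for each generator $\zeta$. The structure of $\mathbf{R}$ as a double sum, factoring into a ``Cartan part'' $\sum h^n\, \mathbf{b}^n \otimes \mathbf{a}^n / n!$ and a ``nilpotent part'' $\sum h^m\, \mathbf{y}^m \otimes \mathbf{x}^m / \{m\}_{q_{\epsilon}}!$, is tailored to realize $\mathbb{D}$ as a Drinfeld-double-type quasi-triangular Hopf algebra pairing the sub-Hopf algebra generated by $(\mathbf{a},\mathbf{x})$ with the one generated by $(\mathbf{b},\mathbf{y})$. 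Finally, to secure the ribbon axioms I would show that $\kappa = (\mathbf{AB})^{1/2}$ is group-like, that $\kappa\, \zeta\, \kappa^{-1} = S^2(\zeta)$ on generators, and that $v := \kappa^{-1} u$ is central, where $u = \mu \circ (S \otimes \mathrm{id}) \circ \mathrm{flip}(\mathbf{R})$; the remaining ribbon identities ($\Delta(v)$, $S(v)$, $\epsilon(v)$) then follow from general quasi-triangular theory.

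The hardest step is the verification of the hexagon identities, as it entails delicate $q_{\epsilon}$-combinatorial manipulations: one must expand $\Delta(\mathbf{y}^m) = (1 \otimes \mathbf{y} + \mathbf{y} \otimes \mathbf{B})^m$ via the $q_{\epsilon}$-binomial theorem, exploiting the fact that $\mathbf{y}\otimes \mathbf{B}$ and $1 \otimes \mathbf{y}$ $q_{\epsilon}$-commute, and similarly for $\Delta(\mathbf{x}^m)$, and then reassemble the resulting sums into $\mathbf{R}_{13}\mathbf{R}_{23}$ or $\mathbf{R}_{13}\mathbf{R}_{12}$, with the modified quantum factorials $\{m\}_{q_{\epsilon}}!$ absorbing the binomial coefficients exactly. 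A cleaner alternative, which anticipates the isomorphism with $\mathfrak{D}_{\mathfrak{sl}_2}$ announced in the introduction, is to exhibit $\mathbb{D}$ as the $h$-adic completion of the Drinfeld double of the topological Hopf subalgebra generated by $(\mathbf{a},\mathbf{x})$ paired with $(\mathbf{b},\mathbf{y})$ via a non-degenerate Hopf pairing; in that framework the ribbon Hopf algebra structure comes for free from general double theory, so the main task reduces to writing down and checking the pairing.
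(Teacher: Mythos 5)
The paper does not actually prove this theorem: its ``proof'' is a citation to \cite[Thm.~29]{bar2021perturbed}, and the construction there is essentially your ``cleaner alternative'' --- $\mathbb{D}$ is obtained as a Drinfeld double of the Hopf algebra $\mathbb{B}$ generated by $\mathbf{b},\mathbf{y}$ (with $[\mathbf{y},\mathbf{b}]=\epsilon\mathbf{y}$) paired against the half generated by $\mathbf{a},\mathbf{x}$, so that the quasi-triangular structure with the stated $\mathbf{R}$ comes from general double theory. Your second route is therefore the one the cited source actually takes; your first route (direct verification of all axioms) is a legitimate but far more laborious substitute, and its genuinely hard steps --- the hexagon identities via the $q_\epsilon$-binomial theorem, and the ribbon axioms --- are only gestured at. Note also that being a double gives quasi-triangularity but \emph{not} the ribbon structure for free: one must still exhibit the ribbon element and verify $S(v)=v$, $v^2=uS(u)$, etc., which is a real computation in this algebra, not a formal consequence.

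There is moreover one concrete error in the part you do carry out. From $[\mathbf{a},\mathbf{x}]=\mathbf{x}$ one gets $\mathbf{A}^{-1}\mathbf{x}\mathbf{A}=q_{\epsilon}\mathbf{x}$: conjugation by $\mathbf{A}=\exp(-\epsilon h\mathbf{a})$ \emph{rescales} $\mathbf{x}$, it does not fix it. Hence the expression $\mathbf{x}-\mathbf{A}^{-1}\mathbf{x}\mathbf{A}$ you obtain from $\mu\circ(S\otimes\id)\circ\Delta$ equals $(1-q_{\epsilon})\mathbf{x}\neq 0$; asserting that it vanishes ``by $[\mathbf{a},\mathbf{x}]=\mathbf{x}$'' is exactly backwards, since it is that relation which obstructs the vanishing. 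What the check actually shows is that the antipode compatible with $\Delta(\mathbf{x})=1\otimes\mathbf{x}+\mathbf{x}\otimes\mathbf{A}$ is $S(\mathbf{x})=-\mathbf{x}\mathbf{A}^{-1}=-q_{\epsilon}^{-1}\mathbf{A}^{-1}\mathbf{x}$, differing by a factor $q_{\epsilon}$ from the formula printed in the statement (the statement mixes conventions for which tensor leg carries $\mathbf{A}$). A correct write-up must either adjust the antipode/coproduct to a consistent convention or track this $q_{\epsilon}$ explicitly; as written, your verification of the antipode axiom fails.
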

\begin{proof}
See \cite[Thm. 29]{bar2021perturbed}.
\end{proof}

\begin{remark}
The generators $\mathbf{y},\mathbf{b},\mathbf{a},\mathbf{x}$ spell Yang-Baxter in the universal R-matrix.
\end{remark}
The algebra $\mathbb{D}$ is constructed as a Drinfeld double of a Hopf algebra $\mathbb{B}$ given by the relation $[\mathbf{y},\mathbf{b}] = \epsilon \mathbf{y}$. The construction follows the same approach as the construction of $\mathfrak{D}_{\mathfrak{sl}_2}$ as described in \cite{chari1995guide}. Similar to $\mathfrak{D}_{\mathfrak{sl}_2}$, the algebra $\mathbb{D}$ has the benefit that the center is generated by two central elements instead of one. Alongside the previously mentioned central element $\mathbf{t} \in \mathbb{D}$, the following also holds.
\begin{proposition}
  \label{pro:bigw}
          The element $\mathbf{W}=\mathbf{y} \mathbf{A}^{-1} \mathbf{x}+\frac{q_{\epsilon} \mathbf{A}^{-1}+\mathbf{A T}-\frac{1}{2}(\mathbf{1}+\mathbf{T})(q_{\epsilon}+1)}{h(q_{\epsilon}-1)}$ is central and satisfies $\mathbf{W} = \mathbf{yx} + g( \mathbf{a}, \mathbf{t}) \mod h $ for some power series in $g$.
\end{proposition}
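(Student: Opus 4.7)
The proposition splits into two claims: centrality of $\mathbf{W}$ and the reduction $\mathbf{W}\equiv\mathbf{yx}+g(\mathbf{a},\mathbf{t})\pmod h$. A useful preliminary is the identity $\mathbf{AT}=\mathbf{B}$: from $\mathbf{t}=\mathbf{b}-\epsilon\mathbf{a}$ and $[\mathbf{a},\mathbf{b}]=0$ one has $\mathbf{T}=\exp(-h\mathbf{t})=\mathbf{B}\mathbf{A}^{-1}$, so $\mathbf{AT}=\mathbf{A}\cdot\mathbf{B}\mathbf{A}^{-1}=\mathbf{B}$. Thus the scalar summand of $\mathbf{W}$ only involves $\mathbf{A}^{-1}$, $\mathbf{B}$, and the central quantities $\mathbf{T}$ and $q_\epsilon$.

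Since $\mathbf{t}$ is central by hypothesis, only the commutators with $\mathbf{a}$, $\mathbf{x}$, $\mathbf{y}$ remain to be checked. Commutation with $\mathbf{a}$ is automatic: the defining relations endow $\mathbb{D}$ with a grading ($\deg\mathbf{x}=+1$, $\deg\mathbf{y}=-1$, $\deg\mathbf{a}=\deg\mathbf{b}=0$), under which $\mathbf{y}\mathbf{A}^{-1}\mathbf{x}$ has weight zero while the remaining summand is a power series in the commuting variables $\mathbf{a},\mathbf{t}$. For $[\mathbf{x},\mathbf{W}]$, I would first derive the exchange formulas $\mathbf{x}\mathbf{A}^{-1}=q_\epsilon^{-1}\mathbf{A}^{-1}\mathbf{x}$ and $\mathbf{x}\mathbf{B}=q_\epsilon\mathbf{B}\mathbf{x}$ from $[\mathbf{a},\mathbf{x}]=\mathbf{x}$ and $[\mathbf{b},\mathbf{x}]=\epsilon\mathbf{x}$, then use $\mathbf{xy}=q_\epsilon\mathbf{yx}+(\mathbf{1}-\mathbf{AB})/h$ to compute
\[
[\mathbf{x},\mathbf{y}\mathbf{A}^{-1}\mathbf{x}]=\tfrac{\mathbf{A}^{-1}-\mathbf{B}}{h}\,\mathbf{x}.
\]
A parallel direct computation yields $[\mathbf{x},q_\epsilon\mathbf{A}^{-1}+\mathbf{B}]=(q_\epsilon-1)(\mathbf{B}-\mathbf{A}^{-1})\mathbf{x}$, while the symmetric piece $\tfrac12(\mathbf{1}+\mathbf{T})(q_\epsilon+1)$ commutes with $\mathbf{x}$ since $\mathbf{T}$ is central. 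Dividing the second expression by $h(q_\epsilon-1)$ and summing with the first gives $[\mathbf{x},\mathbf{W}]=0$. The vanishing of $[\mathbf{y},\mathbf{W}]$ is the mirror image, using the dual exchanges $\mathbf{y}\mathbf{A}^{-1}=q_\epsilon\mathbf{A}^{-1}\mathbf{y}$ and $\mathbf{y}\mathbf{B}=q_\epsilon^{-1}\mathbf{B}\mathbf{y}$.

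For the reduction modulo $h$, write $\mathbf{A}^{-1}=\mathbf{1}+O(h)$, so $\mathbf{y}\mathbf{A}^{-1}\mathbf{x}\equiv\mathbf{yx}\pmod h$. For the scalar fraction I would Taylor expand numerator and denominator to order $h^2$: the constant coefficient of the numerator is $1+1-\tfrac12\cdot2\cdot2=0$ and its $h$-coefficient is $\epsilon(1+\mathbf{a})-(\mathbf{t}+\epsilon\mathbf{a})-(\epsilon-\mathbf{t})=0$, so the numerator is $O(h^2)$ and matches the denominator $h(q_\epsilon-1)=\epsilon h^2+O(h^3)$. Extracting the $h^2$-coefficient and dividing by $\epsilon$ yields the explicit polynomial $g(\mathbf{a},\mathbf{t})=\tfrac{\mathbf{t}}{2}+\mathbf{a}\mathbf{t}+\epsilon(\mathbf{a}+\mathbf{a}^2)$, which lies in the subalgebra generated by $\mathbf{a}$ and $\mathbf{t}$ as required.

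The main obstacle is the delicate calibration visible in the $[\mathbf{x},\mathbf{W}]$ cancellation: the coefficient $q_\epsilon$ placed in front of $\mathbf{A}^{-1}$, together with the denominator $h(q_\epsilon-1)$, are chosen precisely so that the contribution from the scalar piece cancels $\frac{\mathbf{A}^{-1}-\mathbf{B}}{h}\mathbf{x}$ on the nose. The symmetric term $\tfrac12(\mathbf{1}+\mathbf{T})(q_\epsilon+1)$ plays no role in centrality but is required to annihilate the $h^0$ and $h^1$ coefficients of the numerator, which is what makes the $h$-adic limit defining $g(\mathbf{a},\mathbf{t})$ well-defined.
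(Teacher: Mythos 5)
Your proposal is correct, and it is a genuinely different route from the paper's: the paper proves this proposition purely by citation to Bar-Natan--Van der Veen (\cite[Thm.~49]{bar2021perturbed}), whereas you give a self-contained verification directly from the defining relations of $\mathbb{D}$. I checked the key steps and they hold: $\mathbf{AT}=\mathbf{B}$ follows from $[\mathbf{a},\mathbf{b}]=0$; the exchange rules $\mathbf{x}\mathbf{A}^{-1}=q_\epsilon^{-1}\mathbf{A}^{-1}\mathbf{x}$, $\mathbf{x}\mathbf{B}=q_\epsilon\mathbf{B}\mathbf{x}$ (and their $\mathbf{y}$-duals) follow from $f(\mathbf{a})\mathbf{x}=\mathbf{x}f(\mathbf{a}+1)$, $f(\mathbf{b})\mathbf{x}=\mathbf{x}f(\mathbf{b}+\epsilon)$; the commutator $[\mathbf{x},\mathbf{y}\mathbf{A}^{-1}\mathbf{x}]=\tfrac{1}{h}(\mathbf{A}^{-1}-\mathbf{B})\mathbf{x}$ is exactly cancelled by $[\mathbf{x},\tfrac{q_\epsilon\mathbf{A}^{-1}+\mathbf{B}}{h(q_\epsilon-1)}]=\tfrac{1}{h}(\mathbf{B}-\mathbf{A}^{-1})\mathbf{x}$; centrality with respect to $\mathbf{a}$ follows from the weight grading, and $\mathbf{b}=\mathbf{t}+\epsilon\mathbf{a}$ is then covered since $\mathbf{t}$ is central. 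For the reduction mod $h$, your expansion is also right: the numerator vanishes to order $h^2$, the denominator is $\epsilon h^2+O(h^3)$, and the quotient of the $h^2$-coefficients gives $g(\mathbf{a},\mathbf{t})=\tfrac{\mathbf{t}}{2}+\mathbf{a}\mathbf{t}+\epsilon(\mathbf{a}+\mathbf{a}^2)$, which I verified independently. What your approach buys is an explicit, checkable proof (including an explicit $g$) that keeps the paper self-contained; what the citation buys is brevity and consistency with the source where $\mathbf{W}$ is originally constructed. One small presentational point: you assert centrality of $\mathbf{t}$ "by hypothesis" --- it is worth noting the one-line check $[\mathbf{t},\mathbf{x}]=[\mathbf{b},\mathbf{x}]-\epsilon[\mathbf{a},\mathbf{x}]=0$ and similarly for $\mathbf{y}$, since the paper only mentions this in passing.
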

\begin{proof}
See \cite[Thm. 49]{bar2021perturbed}.
\end{proof}

\begin{proposition}
  \label{pro:DDiso}
  The map $\phi \colon \mathbb{D} \to \mathfrak{D}_{\mathfrak{sl}_2}$, given by
\begin{align*}
  \begin{array}{ll} \mathbf{b} \mapsto -\epsilon H/2 & \mathbf{y} \mapsto E     \\ \mathbf{a} \mapsto \widetilde{H}/2 & \mathbf{x} \mapsto \frac{q - q^{-1}}{2h \epsilon^{-1}}q^{-\widetilde{H}} F \\
    h \mapsto 2 h \epsilon^{-1},
  \end{array}
\end{align*}
is an isomorphism of ribbon Hopf $\mathbb{Q}(\epsilon) \llbracket h \rrbracket$-algebras. In particular, this implies that
\begin{align*}
  \phi(\mathbf{t}) &= -\epsilon H/2 - \epsilon \widetilde{H}/2 = \epsilon c \\
  \phi \left( \mathbf{W} \right) &=\left( \frac{\epsilon(q-q^{-1})}{2 h}\right) \left( EF + \frac{q q^{\widetilde{H}} + q^{-1} q^H - \frac{1}{2}(1+q^{H + \widetilde{H}})(q+q^{-1})}{(q-q^{-1})^2}\right).
\end{align*}
\end{proposition}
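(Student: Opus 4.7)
The plan is to prove Proposition~\ref{pro:DDiso} by verifying in turn that $\phi$ respects the algebra relations of $\mathbb{D}$, the Hopf structure, the universal $R$-matrix and the balancing element, before confirming bijectivity and deriving the explicit formulas for $\phi(\mathbf{t})$ and $\phi(\mathbf{W})$. Throughout, I work with the scalar convention $\phi(h) = 2h\epsilon^{-1}$, under which
\[
\phi(q_\epsilon) = q^{2}, \qquad \phi(\mathbf{A}) = q^{-\widetilde{H}}, \qquad \phi(\mathbf{B}) = q^{H}, \qquad \phi(\kappa) = q^{(H-\widetilde{H})/2},
\]
the last of which reproduces the balancing element of $\mathfrak{D}_{\mathfrak{sl}_2}$.

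The principal calculation for the algebra structure is the relation $\mathbf{xy} = q_\epsilon \mathbf{yx} + (1-\mathbf{AB})/h$; the remaining Cartan-type relations $[\mathbf{a},\mathbf{x}]=\mathbf{x}$, $[\mathbf{b},\mathbf{x}]=\epsilon\mathbf{x}$, $[\mathbf{a},\mathbf{y}]=-\mathbf{y}$, $[\mathbf{b},\mathbf{y}]=-\epsilon\mathbf{y}$ and $[\mathbf{a},\mathbf{b}]=0$ reduce immediately to standard commutators between $H,\widetilde{H},E,F$ in $\mathfrak{D}_{\mathfrak{sl}_2}$. For the $\mathbf{xy}$-relation, I use the intertwining $q^{-\widetilde{H}}E = q^{2}Eq^{-\widetilde{H}}$ (a consequence of $[\widetilde{H},E]=-2E$) together with $[E,F]=(q^{H}-q^{\widetilde{H}})/(q-q^{-1})$ to obtain $\phi(\mathbf{xy}-q_\epsilon\mathbf{yx}) = \frac{\epsilon}{2h}(1-q^{H-\widetilde{H}})$, matching $\phi\bigl((1-\mathbf{AB})/h\bigr)$ exactly.

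Compatibility with the coproduct is verified generator by generator. For instance, using $\Delta(F)=F\otimes 1+q^{\widetilde{H}}\otimes F$ and $\Delta(q^{-\widetilde{H}})=q^{-\widetilde{H}}\otimes q^{-\widetilde{H}}$ in $\mathfrak{D}_{\mathfrak{sl}_2}$, one computes $\Delta(\phi(\mathbf{x})) = 1\otimes\phi(\mathbf{x}) + \phi(\mathbf{x})\otimes q^{-\widetilde{H}} = (\phi\otimes\phi)(\Delta(\mathbf{x}))$; the checks for $\mathbf{y},\mathbf{a},\mathbf{b}$, and for the counits and antipodes, are analogous. For the $R$-matrix, I apply $\phi\otimes\phi$ term by term to $\mathbf{R}$ and use the identities $\{m\}_{q^{2}}! = q^{m(m-1)/2}[m]_q!$ (derived from $[m]_q = q^{-m+1}\{m\}_{q^{2}}$) and $(q^{-\widetilde{H}}F)^{m} = q^{-m(m-1)}q^{-m\widetilde{H}}F^{m}$, so that after collecting the Cartan exponentials one recovers $R_{\mathfrak{D}_{\mathfrak{sl}_2}}$. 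Bijectivity then follows by inverting $\phi$ on generators: $E\mapsto\mathbf{y}$, $q^{-\widetilde{H}}F\mapsto \frac{2h}{\epsilon(q-q^{-1})}\mathbf{x}$, $\widetilde{H}\mapsto 2\mathbf{a}$, $H\mapsto -2\mathbf{b}/\epsilon$, and invoking the PBW theorem on each side.

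Once $\phi$ is established as an isomorphism, $\phi(\mathbf{t}) = \phi(\mathbf{b}) - \epsilon\phi(\mathbf{a}) = -\epsilon(H+\widetilde{H})/2 = \epsilon c$ is immediate. For $\mathbf{W}$, I first observe that $[\mathbf{a},\mathbf{b}]=0$ yields $\mathbf{T} := \exp(-h\mathbf{t}) = \mathbf{B}\mathbf{A}^{-1}$, so $\phi(\mathbf{T}) = q^{H+\widetilde{H}}$ and $\phi(\mathbf{AT}) = \phi(\mathbf{B}) = q^{H}$. Combined with $\phi(\mathbf{y}\mathbf{A}^{-1}\mathbf{x}) = \frac{\epsilon(q-q^{-1})}{2h}EF$ (where the Cartan factors $q^{\widetilde{H}}$ and $q^{-\widetilde{H}}$ cancel), substitution into the formula for $\mathbf{W}$ and simplification of the scalar denominator via $q^{2}-1 = q(q-q^{-1})$ and $q^{2}+1 = q(q+q^{-1})$ produces precisely the claimed expression for $\phi(\mathbf{W})$. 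The main obstacle throughout is the careful bookkeeping of scalar factors introduced by the change of variables $h\mapsto 2h\epsilon^{-1}$ and by the intertwiner $q^{-\widetilde{H}}$ appearing in $\phi(\mathbf{x})$, especially when matching the infinite sum defining the $R$-matrix.
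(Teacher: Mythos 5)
Your argument is correct in substance but takes a genuinely different route from the paper. The paper's own proof verifies nothing: it is a single structural remark that $\mathfrak{D}_{\mathfrak{sl}_2}$ and $\mathbb{D}$ arise from the same Drinfeld-double construction (Chari--Pressley on one side, \cite{bar2021perturbed} on the other), differing only in where factors of $\epsilon$ are inserted, so the isomorphism ``holds by construction.'' Your generator-by-generator verification is more laborious but also more informative: the normalization constants --- the prefactor $\frac{q-q^{-1}}{2h\epsilon^{-1}}$ in $\phi(\mathbf{x})$, the rescaling $h\mapsto 2h\epsilon^{-1}$, the conversion $\{m\}_{q^{2}}!=q^{m(m-1)/2}[m]_q!$ --- are exactly what the structural argument leaves unchecked, and your computations of the $\mathbf{xy}$-relation, of $\phi(\kappa)$, and of $\phi(\mathbf{W})$ (including the cancellation via $q^{2}-1=q(q-q^{-1})$ and $q^{2}+1=q(q+q^{-1})$, and the observation $\mathbf{T}=\mathbf{B}\mathbf{A}^{-1}$) are precisely what one needs in order to trust the displayed formulas. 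The paper gets brevity; you get an actual check of the constants.

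One computational slip in the $R$-matrix step: since $[\widetilde H,F]=2F$ one has $q^{-\widetilde H}F=q^{-2}Fq^{-\widetilde H}$, hence $(q^{-\widetilde H}F)^{m}=q^{+m(m-1)}q^{-m\widetilde H}F^{m}$, not $q^{-m(m-1)}$ as you wrote. The plus sign is the one you need: combined with $\{m\}_{q^{2}}!=q^{m(m-1)/2}[m]_q!$ and with the commutation of the Cartan factor $q^{-H\otimes\widetilde H/2}$ past the $E^{m}\otimes F^{m}$ part (which is needed to bring both expressions into the same normal order and absorbs the remaining powers of $q$), it produces exactly the coefficient $q^{m(m-1)/2}/[m]_q!$ of $R_{\mathfrak{D}_{\mathfrak{sl}_2}}$; with your sign the two sides would disagree by $q^{-2m(m-1)}$. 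That Cartan commutation is also the one place your sketch is silent, and since it is where the surplus $q$-powers cancel, it deserves an explicit line.
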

\begin{proof}
Rather than giving a formal proof of the isomorphism, we observe that the construction of $\mathfrak{D}_{\mathfrak{sl}{2}}$ (see, e.g., Chapter 8 of \cite{chari1995guide}) proceeds through exactly the same steps as the construction of $\mathbb{D}$ (cf. \cite{bar2021perturbed}). The only distinction is that in the case of $U_{h}(\mathfrak{sl}_{2})$ one sets $\epsilon = 1$, whereas in our situation we eliminate $\epsilon$ by inserting factors of $\epsilon^{-1}$ where necessary. The proposition therefore holds by construction.
  \end{proof}
\begin{notation}
The universal invariant arising from the algebra $\mathbb{D}$ and related to a long knot $\mathcal{K}$ shall be denoted by $\mathbf{Z}_{\mathbb{D}}(\mathcal{K})$.
\end{notation}
\begin{corollary}
  \label{cor:JphiD}
Let $m_0$ be the highest weight vector in $\widetilde{M}_h(\lambda,\mu)$. The following equality holds
  \[
    \mathbf{J}_{\mathcal{K}}(\lambda,h) m_0 = \phi(\mathbf{Z}_{\mathbb{D}}(\mathcal{K})) m_0.
    \]
  \end{corollary}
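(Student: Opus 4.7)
The plan is to reduce the statement to Proposition~\ref{pro:JKZD} via the ribbon Hopf algebra isomorphism $\phi$ of Proposition~\ref{pro:DDiso}. The key conceptual point is that the universal invariant is constructed out of purely the $R$-matrix, its inverse, and the balancing element, using the procedure of Section~\ref{sec:theunivinvar}, which is manifestly natural with respect to morphisms of ribbon Hopf algebras (or, more precisely, of $XC$-algebras).

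First, I would record the following naturality statement: if $\phi \colon (A_1, R_1, \kappa_1) \to (A_2, R_2, \kappa_2)$ is a morphism of ribbon Hopf algebras (or $XC$-algebras), then for any long knot $\mathcal{K}$ one has $\phi(Z_{A_1}(\mathcal{K})) = Z_{A_2}(\mathcal{K})$. Indeed, decomposing $\mathcal{K}$ into crossings, maxima and minima as in Section~\ref{sec:theunivinvar} and applying $\phi$ bead-by-bead, every $\alpha_i \otimes \beta_i$ appearing in $R_1$ is mapped to the corresponding $\alpha'_i \otimes \beta'_i$ coming from $R_2 = (\phi \otimes \phi)(R_1)$, and the balancing element $\kappa_1$ is sent to $\kappa_2$. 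Multiplying along each strand and taking the tensor over strands commutes with $\phi$, so $\phi(Z_{A_1}(\mathcal{K})) = Z_{A_2}(\mathcal{K})$.

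Next, I would apply this to $\phi \colon \mathbb{D} \to \mathfrak{D}_{\mathfrak{sl}_2}$ from Proposition~\ref{pro:DDiso}. Since $\phi$ is an isomorphism of ribbon Hopf $\mathbb{Q}(\epsilon)\llbracket h \rrbracket$-algebras, we have $(\phi \otimes \phi)(\mathbf{R}) = R_{\mathfrak{D}_{\mathfrak{sl}_2}}$ and $\phi(\boldsymbol{\kappa}) = q^{H/2 - \widetilde{H}/2}$. The naturality observation above therefore gives
\[
\phi(\mathbf{Z}_{\mathbb{D}}(\mathcal{K})) = Z_{\mathfrak{D}_{\mathfrak{sl}_2}}(\mathcal{K}).
\]
Acting on the highest weight vector $m_0 \in \widetilde{M}_h(\lambda,\mu)$ and invoking Proposition~\ref{pro:JKZD} then yields
\[
\phi(\mathbf{Z}_{\mathbb{D}}(\mathcal{K})) m_0 = Z_{\mathfrak{D}_{\mathfrak{sl}_2}}(\mathcal{K}) m_0 = \mathbf{J}_{\mathcal{K}}(\lambda, h)\, m_0,
\]
which is the claim.

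I do not expect any serious obstacle: the only thing to double-check is the naturality lemma for morphisms of $XC$-algebras, which is immediate from the explicit bead-by-bead formula for $Z_A$, and the compatibility of $\phi$ with $\mathbf{R}$ and $\boldsymbol{\kappa}$, which is part of the statement of Proposition~\ref{pro:DDiso}. The mild subtlety is just keeping track of the reparametrization $h \mapsto 2h\epsilon^{-1}$ in $\phi$, but since both invariants are defined over the same ring and $\phi$ is an algebra isomorphism over that ring, this introduces no genuine difficulty.
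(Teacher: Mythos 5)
Your proposal is correct and follows essentially the same route as the paper, which simply states the corollary is ``a direct consequence of Proposition~\ref{pro:JKZD}''; the naturality of the universal invariant under the ribbon Hopf algebra isomorphism $\phi$ (giving $\phi(\mathbf{Z}_{\mathbb{D}}(\mathcal{K})) = Z_{\mathfrak{D}_{\mathfrak{sl}_2}}(\mathcal{K})$) is exactly the implicit step you have made explicit. Your write-up is if anything more complete than the paper's one-line proof.
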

\begin{proof}
This is a direct consequence of Proposition~\ref{pro:JKZD}.
\end{proof}
\begin{remark}
The action of $\phi(Z_{\mathbb{D}}(\mathcal{K}))$ on $m_0 \in \widetilde{M}_h(\lambda,\mu)$ is independent of $\mu$. Thus, we may freely choose any $\mu \in \mathbb{Q}(\epsilon)$, and still obtain the same knot invariant.
\end{remark}
In the previous sections, we emphasized the practicality of the availability of two generators of the center instead of just one. This mainly arises from the way in which the knot invariant $\mathbf{Z}_{\mathbb{D}}(\mathcal{K})$ can be expanded, as will become clear in Theorem~\ref{thm:barnatanveen}. To relate the terms of this expansion to the terms of the large-color expansion, it shall be necessary to compute the action of the central elements $\phi(\mathbf{W})$ and $\phi(\mathbf{T})$ on the highest weight vector of the module $\widetilde{M}_h(\lambda,\mu)$.

Let $\sigma \in \{\pm 1\}$.
\begin{proposition}
  \label{pro:phiwphit}
  Let $m_0 \in \widetilde{M}_h(\lambda,\sigma \lambda)$ be the highest weight vector. We have
  \begin{align*}
    \phi (\mathbf{W})m_0  &=  \frac{ \sigma \epsilon}{4h}  (1-q^{- 2\sigma  \lambda}) m_{0} \\
 \phi(\mathbf{T}) m_0 &= q^{- 2  \sigma \lambda} m_{0}.
  \end{align*}
  Moreover,
  \begin{align*}
    (\tilde{\iota}_{\alpha} \circ \phi)\left(  \mathbf{W} \right)m_0 &=\frac{\sigma \alpha \epsilon}{4 h}(1-q^{-2 \alpha \sigma \lambda}) m_0
 \\
   (\tilde{\iota}_{\alpha} \circ \phi)(\mathbf{T}) m_{0} &= q^{-2 \alpha \sigma \lambda} m_0.
  \end{align*}
\end{proposition}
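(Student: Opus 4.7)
The plan is to verify both identities by direct computation, using the explicit formulas for $\phi$ from Proposition~\ref{pro:DDiso} together with the eigenvalue action on the highest weight vector $m_0$ of $\widetilde{M}_h(\lambda,\sigma\lambda)$ from Proposition~\ref{pro:Dmodule}. Since $\mathbf{W}$ and $\mathbf{T}$ are central in $\mathbb{D}$, their $\phi$- and $(\tilde{\iota}_\alpha\circ\phi)$-images are central in $\mathfrak{D}_{\mathfrak{sl}_2}$ and act as scalars on $m_0$, so the task reduces to identifying these scalars.

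The formula for $\phi(\mathbf{T})$ is immediate. From $\mathbf{T} = \exp(-h\mathbf{t})$ together with $\phi(h) = 2h\epsilon^{-1}$ and $\phi(\mathbf{t}) = \epsilon c$, we obtain $\phi(\mathbf{T}) = q^{-2c}$, which acts as $q^{-2\sigma\lambda}$ on $m_0$. Since $\tilde{\iota}_\alpha$ fixes $c$ and sends $h \mapsto \alpha h$, the twisted version is $(\tilde{\iota}_\alpha \circ \phi)(\mathbf{T}) = q^{-2\alpha c}$, acting as $q^{-2\alpha\sigma\lambda}$.

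For $\phi(\mathbf{W})$, Proposition~\ref{pro:DDiso} gives $\phi(\mathbf{W}) = \frac{\epsilon(q-q^{-1})}{2h}\bigl(EF + N\bigr)$, where $N$ is a scalar expression in $q^H, q^{\widetilde{H}}$ and $q^{H+\widetilde{H}}$. Since $Em_0 = 0$ and $[E,F] = \frac{q^H - q^{\widetilde{H}}}{q-q^{-1}}$, we get $EFm_0 = \frac{q^H-q^{\widetilde{H}}}{q-q^{-1}}m_0$. Using $H = H' - c$, $\widetilde{H} = -H' - c$, $H'm_0 = \lambda m_0$ and $cm_0 = \sigma\lambda m_0$, the operators $q^H, q^{\widetilde{H}}, q^{H+\widetilde{H}}$ act on $m_0$ as the scalars $q^{(1-\sigma)\lambda}, q^{-(1+\sigma)\lambda}, q^{-2\sigma\lambda}$ respectively. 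The $EF$-contribution simplifies to $\frac{\sigma\epsilon}{2h}(1-q^{-2\sigma\lambda})$, while the scalar part contributes $-\frac{\sigma\epsilon}{4h}(1-q^{-2\sigma\lambda})$ after factoring $(q^2-1)$ out of its numerator; their sum gives the claimed formula.

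For $(\tilde{\iota}_\alpha \circ \phi)(\mathbf{W})m_0$, the key observation is that $\tilde{\iota}_\alpha(F)m_0 = 0$: since $F = q^{-c}F'$, we have $\tilde{\iota}_\alpha(F) = q^{-\alpha c}\, E'\, q^{H'(\alpha-1)/2}$, and on $m_0$ the rightmost factor yields the scalar $q^{\lambda(\alpha-1)/2}$, which is then annihilated by $E'$. Consequently $\tilde{\iota}_\alpha(EF)m_0 = 0$, and one only needs to track the scalar part of $\phi(\mathbf{W})$ under $\tilde{\iota}_\alpha$. The substitutions $q \mapsto q^\alpha$, $q^H \mapsto q^{\alpha\widetilde{H}}$, $q^{\widetilde{H}} \mapsto q^{\alpha H}$, $q^{H+\widetilde{H}} \mapsto q^{-2\alpha c}$ (coming from $\tilde{\iota}_\alpha(H) = \widetilde{H}$, $\tilde{\iota}_\alpha(\widetilde{H}) = H$, $\tilde{\iota}_\alpha(c) = c$, $\tilde{\iota}_\alpha(h) = \alpha h$) produce precisely the claimed formula after the same algebraic simplification. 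The main obstacle throughout is the algebraic cancellation in the $\mathbf{W}$-computation: one recognizes the scalar numerator as $\frac{\sigma}{2}(q^2-1)(q^{-2\sigma\lambda}-1)$, either handling the cases $\sigma = \pm 1$ separately or verifying the uniform identity $q^{(1-\sigma)\lambda} - q^{-(1+\sigma)\lambda} = \sigma(1-q^{-2\sigma\lambda})$ at the outset.
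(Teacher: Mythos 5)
Your proposal is correct, and its overall strategy --- evaluate the explicit image of $\mathbf{W}$ from Proposition~\ref{pro:DDiso} on $m_0$ using the module action of Proposition~\ref{pro:Dmodule}, and read off $\phi(\mathbf{T})$ from $\phi(\mathbf{t})=\epsilon c$ and $\phi(h)=2h\epsilon^{-1}$ --- is the same as the paper's. The one genuine divergence is in the twisted case. You observe that $\tilde{\iota}_{\alpha}(F)=q^{-\alpha c}E'q^{H'(\alpha-1)/2}$ annihilates $m_0$, so the entire $EF$ term drops out and the eigenvalue comes solely from the Cartan part of $\phi(\mathbf{W})$, with $q^{H}\leftrightarrow q^{\widetilde H}$ swapped (since $\tilde\iota_\alpha(H)=\widetilde H$ as $H=H'-c\mapsto -H'-c$); I checked that the resulting scalar is $\frac{\sigma\alpha\epsilon}{4h}(1-q^{-2\alpha\sigma\lambda})$ for both signs of $\sigma$. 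The paper instead writes $(\tilde{\iota}_{\alpha}\circ\phi)(\mathbf{W})m_0$ with a surviving term $q^{-\alpha c}E'F'$ (which does not kill $m_0$) and a compensatingly different Cartan part, arriving at the same total. Under the natural reading of Notation~\ref{not:iota} as an algebra homomorphism --- which is forced by the Cartan relations --- your accounting of the individual contributions is the cleaner and more defensible one, and it has the advantage of making the twisted computation a one-line reduction to the scalar part. One cosmetic slip: the factored numerator of the scalar part is $\tfrac{\sigma}{2}(q-q^{-1})(q^{-2\sigma\lambda}-1)$ rather than $\tfrac{\sigma}{2}(q^{2}-1)(q^{-2\sigma\lambda}-1)$; this does not affect the contributions you state, which are correct.
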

\begin{proof}
  By using the module structure as given in Proposition~\ref{pro:Dmodule}, together with the image of $\mathbf{W}$ as given in Proposition~\ref{pro:DDiso}, we find
  \begin{align*}
    \phi (\mathbf{W} )m_0 &=\frac{\epsilon}{2h} \frac{q^{\sigma} + q^{- \sigma (2\lambda+1)}- \frac1 2 (1+q^{-2 \sigma \lambda})(q+q^{-1})}{q-q^{-1}} m_0 \\
                                            &=   \frac{ \sigma \epsilon}{4h}  (1-q^{- 2 \sigma \lambda}) m_{0}.
  \end{align*}
  This proves the first identity. The second identity is obvious.

  For the third identity, we note that
    \begin{align*}
     & (\iota_{\alpha}\circ \phi) (\mathbf{W} )m_0 \\
     &= \left( \frac{\epsilon(q-q^{-1})}{2 h}\right)  \Bigg( q^{-\alpha c}E'F' \\
        &+\frac{q^{\alpha} q^{- \alpha H'-\alpha c} + q^{-\alpha} q^{\alpha H'-\alpha c} - \frac{1}{2}(1+q^{- 2 \alpha c})(q+q^{-1})}{(q-q^{-1})^2} \Bigg)m_{0} \\
     &= \left( \frac{\epsilon(q-q^{-1})}{2 h}\right) q^{-\alpha c} \Bigg(   \frac{q^{H'}-q^{-H'}}{q-q^{-1}}  \\
      &+\frac{q^{\alpha} q^{- \alpha H'} + q^{-\alpha} q^{\alpha H'} - \frac{1}{2}(q^{\alpha c}+q^{-  \alpha c})(q+q^{-1})}{(q-q^{-1})^2} \Bigg)m_{0} \\
      &= \frac{\sigma \alpha \epsilon}{4 h}(1-q^{-2 \alpha \sigma \lambda}) m_0.
    \end{align*}
The final equality is also obvious.
\end{proof}

\subsection{The large-color expansion}

After studying Melvin--Morton's work on the expansion of the colored Jones polynomials \cite{melvin1995coloured}, Rozansky conjectured and then showed the existence of a rational expansion of these polynomials with denominators being powers of the Alexander polynomial \cite{rozansky1997higher,rozansky1998universalr}.

 Let $\mathcal{K}$ be a 0-framed long knot.
\begin{theorem}[The large-color expansion]
  \label{thm:rozoverbay}
There exist symmetric Laurent polynomials
  \[
    P^{\mathcal{K}}_i \in \mathbb{Z}[t,t^{-1}], \qquad i \geq 0
    \]
    such that
    \[
      J_{\mathcal{K}}^{n}(q)=\sum_{i=0}^{\infty} \frac{P^{\mathcal{K}}_{i}\left(q^{2n}\right)}{\Delta_{\mathcal{K}}^{2 i+1}\left(q^{2n}\right)}(q^2-1)^{i} \in \mathbb{Q} \llbracket q^2-1 \rrbracket,
      \]
      where $P_{\mathcal{K}}^0(t) = 1$.
\end{theorem}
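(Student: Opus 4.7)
This theorem is originally due to Rozansky, with explicit low-order computations carried out by Overbay. Rather than following his original perturbative integration around the abelian flat connection on the knot complement (where the denominator structure emerges from Reidemeister torsion and is organized by Chern--Simons loop counting), I would reprove it inside the framework of this paper, since that is the route that connects directly to $\mathbf{Z}_{\mathbb{D}}$.

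The first step is to combine Proposition~\ref{pro:polyfunction} and Corollary~\ref{cor:JphiD} to obtain
\[
J_{\mathcal{K}}^{n}(e^{h})\, m_{0} \;=\; \phi(\mathbf{Z}_{\mathbb{D}}(\mathcal{K}))\, m_{0},
\]
where $m_{0}\in\widetilde{M}_{h}(n-1,\mu)$ is the highest weight vector, with $\mu$ chosen conveniently. The Bar-Natan--Van der Veen expansion (Theorem~\ref{thm:barnatanveen}) presents $\mathbf{Z}_{\mathbb{D}}(\mathcal{K})$ as a perturbed Gaussian whose coefficients $\rho_{i,j}^{\mathcal{K}}$ are polynomials in the central elements $\mathbf{W}$ and $\mathbf{T}$, organized by powers of $\epsilon$ and of $h$. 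By Proposition~\ref{pro:phiwphit} the actions of $\phi(\mathbf{W})$ and $\phi(\mathbf{T})$ on $m_{0}$ are explicit scalars in $q^{\pm 2\lambda}$; after the substitution $\lambda = n-1$ these become scalars in $t = q^{2n}$ up to an overall monomial, so each $\rho_{i,j}^{\mathcal{K}}$ acts as a rational function of $t$ and $h$.

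The next step is to identify the denominators produced by the perturbed Gaussian with powers of $\Delta_{\mathcal{K}}(t)$. The order-$(0,0)$ piece acts as $1/\Delta_{\mathcal{K}}(t)$ --- this is the Melvin--Morton--Rozansky formula, established in the present framework by Bar-Natan and Van der Veen --- which provides the base of the expansion. At order $i$ in $(q^{2}-1)$, each propagator in the Gaussian contraction contributes an additional factor of $1/\Delta_{\mathcal{K}}(t)$, and a careful count of the loops in the Feynman diagrammatics should show that the $i$-th order term has denominator exactly $\Delta_{\mathcal{K}}^{2i+1}(t)$, matching the statement.

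The main obstacle I expect is establishing that the numerators $P_{i}^{\mathcal{K}}$ are genuine symmetric Laurent polynomials with integer coefficients, rather than merely rational functions. Integrality should follow from the integrality of the universal invariant itself (it lives in $\mathbb{D}$, not in a localization), while symmetry under $t \mapsto t^{-1}$ should be extracted from the invariance of $\mathbf{Z}_{\mathbb{D}}$ under orientation reversal, which is captured in this paper by Proposition~\ref{pro:iotaminus} via the antiautomorphism $\iota_{-1}$. Keeping careful track of the $(q^{2}-1)$-powers emerging from both the prefactor of $\mathbf{R}$ and the normalization of $\phi(\mathbf{W})$ in Proposition~\ref{pro:phiwphit} will be the most delicate bookkeeping.
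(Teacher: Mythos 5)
First, a point of comparison: the paper does not prove this statement at all. It is quoted from Rozansky with a citation, and it is subsequently used as an \emph{input} to the paper's main theorem (whose proof ends by ``comparing to Theorem~\ref{thm:rozoverbay}''). Your plan --- rederive the expansion from Theorem~\ref{thm:barnatanveen} by acting on the highest weight vector of $\widetilde{M}_h(n-1,\mu)$ --- is therefore a genuinely different route from the one the paper relies on; it is essentially the program the introduction advertises. It is not circular, since the Bar-Natan--Van der Veen expansion is established independently of Rozansky's work, so in principle this can be made into a proof. What it buys is a uniform algebraic derivation of the denominator structure for all orders; what Rozansky's original route buys is the topological meaning of the $P_i$ and, crucially, the symmetry and integrality statements, which are exactly where your sketch is thinnest.

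Three concrete gaps. (1) The ``each propagator contributes a factor $1/\Delta$, count the loops'' step is not how the argument would go and is not needed: Theorem~\ref{thm:barnatanveen} already hands you denominators $\Delta^{2k-j}$, and after Proposition~\ref{pro:phiwphit} the factor $h^j\mathbf{W}^j$ acts as a multiple of $\epsilon^j(1-T)^j$, so the order-$\epsilon^k$ term has common denominator $\Delta^{2k+1}$. The real work is the change of expansion variables: exponentiating, substituting $\epsilon\propto h$, and re-expanding $T=q^{2n-2}$ around $t=q^{2n}$ mixes orders and introduces derivative terms $\Delta'/\Delta^{2}$; one must verify that the total pole order at $(q^2-1)^i$ is still at most $2i+1$. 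This is elementary but it is the actual content, and your sketch replaces it with Feynman-diagram language that does not apply to Theorem~\ref{thm:barnatanveen} as stated. (2) Your proposed source of the symmetry $P_i(t)=P_i(t^{-1})$ is wrong: Proposition~\ref{pro:iotaminus} and Proposition~\ref{pro:mirrorimage} give the mirror-image relation $\mathbf{J}_{\overline{\mathcal{K}}}(\lambda,h)=\mathbf{J}_{\mathcal{K}}(-\lambda,-h)$, i.e.\ they flip $q$ as well as $\lambda$ and relate $\mathcal{K}$ to a \emph{different} knot. The symmetry in $t$ at fixed $q$ comes instead from the Weyl symmetry $\lambda\mapsto -\lambda-2$ of Verma modules (equivalently $n\mapsto -n$ for the colored Jones polynomial), which is nowhere in the paper's toolkit and would have to be supplied. (3) Integrality: $\rho_{k,j}\in\mathbb{Z}[\mathbf{T}^{\pm1}]$ is the right input, but the isomorphism $\phi$ rescales $h$ by $2\epsilon^{-1}$ and $\phi(\mathbf{W})$ acts with a factor $\tfrac{1}{4}$, so landing in $\mathbb{Z}[t^{\pm1}]$ rather than $\mathbb{Z}[\tfrac12][t^{\pm1}]$ requires an argument, not just the remark that $\mathbf{Z}_{\mathbb{D}}$ is not defined over a localization.
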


Whereas Rozansky has shown the existence of the polynomials $P^{\mathcal{K}}_i$,  explicit expressions of $P_1^{\mathcal{K}}$ and $P_2^{\mathcal{K}}$ for knots up to 10 crossings were found by Overbay \cite{overbay2013perturbative}. On the other hand, in \cite{bar2021perturbed} a new approach to universal quantum knot invariants was developed, emphasizing the role of generating functions instead of generators and relations. With the aid of Hopf algebra techniques, the authors expanded the universal invariant $\mathbf{Z}_{\mathbb{D}}(\mathcal{K})$ in terms of $\epsilon$.

\begin{theorem}
  \label{thm:barnatanveen}
  Let $\mathbf{T} = e^{-h \mathbf{t}}$. There exist polynomials $\rho_{k,j}^{\mathcal{K}} \in \mathbb{Z}[\mathbf{T},\mathbf{T}^{-1}]$ such that
  \[
    \mathbf{Z}_{\mathbb{D}}(\mathcal{K})=\frac{1}{\Delta_{\mathcal{K}}} \exp \left(\sum_{k=1}^{\infty} \epsilon^{k} h^k \sum_{j=0}^{2 k} \rho_{k, j}^{\mathcal{K}} \frac{h^{j}\mathbf{W}^{j}}{\Delta_{\mathcal{K}}^{2 k-j}}\right).
    \]
\end{theorem}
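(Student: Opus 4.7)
The plan is to follow the generating-function strategy of Bar-Natan and Van der Veen from \cite{bar2021perturbed}, viewing the universal invariant $\mathbf{Z}_{\mathbb{D}}(\mathcal{K})$ as a perturbed Gaussian associated to the knot diagram and organising its $\epsilon$-expansion diagrammatically. First I would rewrite each copy of $\mathbf{R}$ as a formal generating function of quasi-Gaussian type in which $(\mathbf{y},\mathbf{x})$ play the role of a creation/annihilation pair and $(\mathbf{a},\mathbf{b})$ the role of weights. Contracting one R-matrix per crossing along a long-knot diagram produces a product of such Gaussians whose normal form, after using the commutation relations of $\mathbb{D}$ and the $h$-adic topology, collapses to a single perturbed Gaussian in finitely many formal variables (one pair per crossing).

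Next I would evaluate this integral at $\epsilon=0$. In this limit the modified factorials $\{m\}_{q_\epsilon}!$ degenerate to ordinary factorials, so the generating function becomes genuinely quadratic and the tangle contraction reduces to the Gaussian integral attached to a matrix $M(\mathbf{T})$ built from the signed crossings. A standard computation (essentially the identification of the Burau matrix with the Reidemeister torsion matrix of the infinite cyclic cover) yields $\det M(\mathbf{T}) = \Delta_{\mathcal{K}}(\mathbf{T})$, producing the leading factor $1/\Delta_{\mathcal{K}}$.

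Then I would expand perturbatively in $\epsilon$. Each $1/\{m\}_{q_\epsilon}!$ admits a Taylor expansion in $\epsilon h$, and inserting these corrections into the Gaussian integral produces, at order $\epsilon^k$, a sum of ``Feynman diagrams'' in which every Wick contraction contributes a factor $1/\Delta_{\mathcal{K}}$ and every vertex that fails to contract contributes a $\mathbf{W}$ on the highest weight vector, since $\mathbf{W} = \mathbf{y}\mathbf{A}^{-1}\mathbf{x} + g(\mathbf{a},\mathbf{t})$ by Proposition~\ref{pro:bigw}. A careful power count then gives the expected shape $\Delta_{\mathcal{K}}^{-(2k-j)} \mathbf{W}^{j}$ with a coefficient polynomial in $\mathbf{T}$, and the standard cumulant/linked-cluster theorem for Gaussian perturbation theory repackages the whole sum in exponential form. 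Centrality of $\mathbf{Z}_{\mathbb{D}}(\mathcal{K})$ together with the fact that $\mathbf{T}$ and $\mathbf{W}$ generate the center of $\mathbb{D}$ modulo $h$ forces the result to lie in the subalgebra appearing on the right-hand side.

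The main obstacle is the precise denominator structure $\Delta_{\mathcal{K}}^{2k-j}$: this requires tight control over how many Wick contractions at order $\epsilon^{k} h^{k+j}$ actually occur and, crucially, verification that the numerators are genuine Laurent polynomials in $\mathbf{T}$ rather than formal $h$-adic series. The upper bound $j \leq 2k$ on the exponent of $\mathbf{W}$ should emerge from a degree argument about the maximal number of uncontracted $\mathbf{y}\mathbf{x}$ insertions produced by a single $\epsilon^{k}$-order correction. Once this power-counting is established, integrality of the $\rho_{k,j}^{\mathcal{K}}$ follows from integrality of the diagrammatic expansion, and the theorem follows.
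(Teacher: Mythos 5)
The paper does not actually prove this statement: Theorem~\ref{thm:barnatanveen} is imported verbatim from Bar-Natan and Van der Veen \cite{bar2021perturbed}, and the surrounding text makes clear that the expansion is taken as an external input (the analogous results nearby are all justified by ``See \cite{bar2021perturbed}''). So there is no in-paper argument to compare yours against; the relevant benchmark is the proof in that reference, whose perturbed-Gaussian strategy your outline does correctly identify at the level of overall architecture.

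As a proof, however, your proposal has genuine gaps, and they are concentrated exactly at the points that make the theorem nontrivial. First, the mechanism by which $\mathbf{W}$ enters is not what you describe: the theorem is an identity in (the center of) $\mathbb{D}$, not a statement about eigenvalues on a highest weight vector, so ``every vertex that fails to contract contributes a $\mathbf{W}$ on the highest weight vector'' conflates the algebra-level claim with a module-level evaluation. What is actually needed is that $\mathbf{Z}_{\mathbb{D}}(\mathcal{K})$ is central and that the center is generated by $\mathbf{T}$ and $\mathbf{W}$ in a strong enough sense to rewrite every normal-ordered monomial $\mathbf{y}^m\mathbf{a}^\ell\mathbf{x}^m$ surviving the contraction as a polynomial in $\mathbf{W}$ with coefficients in $\mathbb{Z}[\mathbf{T}^{\pm1}]$ --- Proposition~\ref{pro:bigw} only gives $\mathbf{W}=\mathbf{y}\mathbf{x}+g(\mathbf{a},\mathbf{t})\bmod h$, which by itself does not control the higher $h$-corrections. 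Second, the three quantitative claims that constitute the theorem --- the denominator exponent $\Delta_{\mathcal{K}}^{2k-j}$, the cutoff $j\le 2k$, and the integrality $\rho_{k,j}^{\mathcal{K}}\in\mathbb{Z}[\mathbf{T},\mathbf{T}^{-1}]$ --- are each deferred to ``a careful power count'' or asserted to ``follow from integrality of the diagrammatic expansion.'' Integrality in particular is not automatic: the Gaussian contraction introduces $\det^{-1}$ factors and the Taylor coefficients of $1/\{m\}_{q_\epsilon}!$ are rational, so cancellation must be proved, not assumed. Until those three points are established, the proposal is a plausible roadmap of the Bar-Natan--Van der Veen argument rather than a proof.
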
 By considering the Seifert surface of a knot, it is possible to show that the knot invariant $\rho_{1,1}^{\mathcal{K}}$ is proportional to the derivative of the Alexander polynomial. This is summarized in the following proposition. \begin{proposition}
The following equalities hold:
  \begin{enumerate}[\normalfont i)]
    \item $\rho_{1,1}^{\mathcal{K}}(T) = \frac{2  T}{1-T} \dv{}{T} \Delta_{\mathcal{K}}(T)$
    \item $\rho_{1,2}^{\mathcal{K}}(T) = 0$.
  \end{enumerate}
\end{proposition}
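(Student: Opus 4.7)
My plan is to prove both parts by comparing the $\epsilon$-expansion of $\mathbf{Z}_{\mathbb{D}}(\mathcal{K})$ acting on a Verma-module highest weight vector with the large-color expansion of the colored Jones polynomial, and then to invoke the Seifert surface argument foreshadowed in the paragraph preceding the proposition.

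First, let $z \in \mathbb{Q}(\epsilon)\llbracket h \rrbracket$ be the scalar defined by $\phi(\mathbf{Z}_{\mathbb{D}}(\mathcal{K})) m_0 = z\, m_0$ on $m_0 \in \widetilde{M}_h(\lambda, -\lambda)$ (i.e.\ $\sigma = -1$). By Proposition~\ref{pro:phiwphit}, $\phi(\mathbf{T}) m_0 = T\, m_0$ and $\phi(\mathbf{W}) m_0 = -\tfrac{\epsilon}{4h}(1-T)\, m_0$, with $T := q^{2\lambda}$. Since Proposition~\ref{pro:DDiso} sends the $\mathbb{D}$-variable $h$ to $2h/\epsilon$, each factor $\epsilon^k h^k$ in the exponent of Theorem~\ref{thm:barnatanveen} becomes $(2h)^k$, and each $h^j \mathbf{W}^j$ acts on $m_0$ as the scalar $(-1/2)^j(1-T)^j$, independently of both $h$ and $\epsilon$. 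Only $k=1$ contributes at order $h^1$, so
\[
\log(\Delta(T)\, z)\,\Big|_{h^1} = 2\left[\frac{\rho_{1,0}(T)}{\Delta(T)^2} - \frac{(1-T)\rho_{1,1}(T)}{2\,\Delta(T)} + \frac{(1-T)^2\rho_{1,2}(T)}{4}\right].
\]

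Next, Corollary~\ref{cor:JphiD} and Proposition~\ref{pro:polyfunction} identify $z$ with $\mathbf{J}_{\mathcal{K}}(\lambda, h)$. A subtle point is that Theorem~\ref{thm:rozoverbay} evaluates $P_i$ and $\Delta$ at $\tilde T := q^{2n} = q^2 T$ rather than at $T$; using $\tilde T = T + 2hT + O(h^2)$ and $q^2 - 1 = 2h + O(h^2)$, the $h^1$-contributions of the prefactor $\Delta(T)/\Delta(\tilde T)$ and of the leading $(q^2-1)$-term together give
\[
\log(\Delta(T)\,\mathbf{J}_{\mathcal{K}})\,\Big|_{h^1} = -\frac{2T\Delta'(T)}{\Delta(T)} + \frac{2\,P_1^{\mathcal{K}}(T)}{\Delta(T)^2}.
\]
Invoking the main theorem of the paper, $\rho_{1,0}^{\mathcal{K}} = P_1^{\mathcal{K}}$, to cancel the $P_1/\Delta^2$ terms yields the Laurent polynomial identity
\[
2(1-T)\rho_{1,1}(T) - (1-T)^2\rho_{1,2}(T)\,\Delta(T) = 4T\Delta'(T). \qquad (\star)
\]

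To disentangle $\rho_{1,1}$ from $\rho_{1,2}$ in $(\star)$, I would apply the Seifert surface argument. The perturbed Gaussian formulation of $\mathbf{Z}_{\mathbb{D}}$ in \cite{bar2021perturbed} expresses the $\epsilon^1$-coefficient of $\log(\Delta(\mathbf{T})\mathbf{Z}_{\mathbb{D}}(\mathcal{K}))$ in terms of the Seifert matrix of $\mathcal{K}$, and a direct computation should identify the coefficient of $h\mathbf{W}/\Delta$ as $\rho_{1,1}(T) = 2T\Delta'(T)/(1-T)$; this is a bona fide Laurent polynomial because the symmetry $\Delta(T) = \Delta(T^{-1})$ forces $\Delta'(1) = 0$. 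Substituting this expression into $(\star)$ reduces it to $(1-T)^2\rho_{1,2}(T)\Delta(T) = 0$, whence $\rho_{1,2}(T) = 0$, establishing (ii). The principal obstacle is the Seifert surface calculation itself: because the perturbed Gaussian expansion of $\mathbf{Z}_{\mathbb{D}}$ involves Feynman-type diagrams weighted by the entries of the Seifert matrix, extracting the $h\mathbf{W}/\Delta$-contribution cleanly requires carefully identifying which diagrams carry a single $\mathbf{W}$-insertion and summing them to reproduce $T\Delta'(T)/(1-T)$, while the apparent absence of $\mathbf{W}^2$-diagrams at first order in $\epsilon$ is confirmed a posteriori by $(\star)$.
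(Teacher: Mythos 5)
The paper offers no internal proof of this proposition: it is imported by citation to \cite[Thm.~53]{bar2021perturbed}, where both identities are established by a direct computation inside $\mathbb{D}$ using the Seifert surface (this is what the sentence preceding the proposition alludes to). Your reconstruction via the colored Jones comparison has two genuine gaps. First, it is circular. Your relation $(\star)$ invokes the paper's main theorem $\rho_{1,0}^{\mathcal{K}}=P_1^{\mathcal{K}}$, but the paper's proof of that theorem itself substitutes both $\rho_{1,1}^{\mathcal{K}}=\tfrac{2T}{1-T}\Delta_{\mathcal{K}}'$ (to produce the $T\sigma\Delta_{\mathcal{K}}\Delta_{\mathcal{K}}'$ term in the order-$h$ coefficient) and $\rho_{1,2}^{\mathcal{K}}=0$ (to discard the $k=1$, $j=2$ term, which, as you correctly note, also contributes at order $h^{1}$ because $h^{j}\mathbf{W}^{j}$ acts as the $h$-independent scalar $(-\tfrac12)^{j}(1-T)^{j}$). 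So the statement you are proving is an input to the identity you use to prove it. Second, the comparison of $h^{1}$-coefficients yields only \emph{one} linear relation among $\rho_{1,0},\rho_{1,1},\rho_{1,2}$; even granting $\rho_{1,0}=P_1$ you have one equation in two unknowns, and the step that breaks the degeneracy --- the ``direct computation'' identifying the coefficient of $h\mathbf{W}/\Delta_{\mathcal{K}}$ with $2T\Delta_{\mathcal{K}}'(T)/(1-T)$ from the Seifert matrix --- is precisely the content of \cite[Thm.~53]{bar2021perturbed} and is only gestured at, not performed. Once that computation is done, part (i) is proved outright and part (ii) follows by the same analysis, so the detour through $J^n_{\mathcal{K}}$ adds nothing that is not already circular or deferred.

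On the positive side, your bookkeeping is correct: with $\sigma=-1$ and $T=q^{2\lambda}$ the scalars $(2h)^{k}$ and $(-\tfrac12)^{j}(1-T)^{j}$ are right, the shift $q^{2n}=q^{2}T$ correctly produces the $-2T\Delta_{\mathcal{K}}'/\Delta_{\mathcal{K}}$ term, and the observation that $\Delta_{\mathcal{K}}(T)=\Delta_{\mathcal{K}}(T^{-1})$ forces $\Delta_{\mathcal{K}}'(1)=0$ (so that $\tfrac{2T}{1-T}\Delta_{\mathcal{K}}'$ is a genuine Laurent polynomial) is sound. To make the argument non-circular you would need an independent proof of $\rho_{1,0}^{\mathcal{K}}=P_1^{\mathcal{K}}$, which the paper does not supply; the honest route here is the one the paper takes, namely the Seifert-surface computation of \cite{bar2021perturbed}.
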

\begin{proof}
See \cite[Thm. 53]{bar2021perturbed}.
\end{proof}
In fact, the values for $\rho_{2,1}^{\mathcal{K}}$ and $\rho_{2,2}^{\mathcal{K}}$ have been computed in \cite{becerra2024bar}, which also turn out to be proportional to higher order derivatives of the Alexander polynomial or $\rho_{1,0}$. As in \cite{overbay2013perturbative}, explicit expressions have been given for the polynomials $\rho_{i,0}^{\mathcal{K}}$ in \cite{bar2021perturbed} for knots up to ten crossings.

Due to a difference in convention in the literature, where possibly the mirror image of knots are considered, it will be useful to establish the following.
\begin{proposition}
  \label{pro:mirrorimage}
$\mathbf{J}_{\overline{\mathcal{K}}}(\lambda,h) = \mathbf{J}_{\mathcal{K}}(-\lambda,-h)$.
\end{proposition}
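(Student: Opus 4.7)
The plan is to apply Proposition~\ref{pro:iotaminus} to reduce the claim to an evaluation of $\iota_{-1}(Z_{U_h}(\mathcal{K}))$ on the highest weight vector $m_0 \in M_h(\lambda)$, and then exploit centrality (Theorem~\ref{thm:knotcenter}) together with the Harish--Chandra parametrization of the center of $U_h(\mathfrak{sl}_2)$. Acting on $m_0$ gives $\mathbf{J}_{\overline{\mathcal{K}}}(\lambda,h)\,m_0 = \iota_{-1}(Z_{U_h}(\mathcal{K}))\,m_0$, so the task reduces to determining how $\iota_{-1}$ transforms the scalar by which a central element acts on the highest weight vector.

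I would first verify by direct expansion that $\iota_{-1}$ fixes the quantum Casimir $C = FE + (q^{H+1} + q^{-H-1})/(q-q^{-1})^2$: the change $\iota_{-1}(FE) - FE = EF - FE = (q^H - q^{-H})/(q - q^{-1})$ exactly balances the shift $\iota_{-1}(q^{H+1}+q^{-H-1}) = q^{H-1}+q^{-H+1}$ of the Cartan piece after dividing by $(q-q^{-1})^2$. Next, since $Z_{U_h}(\mathcal{K})$ is central and the center of $U_h(\mathfrak{sl}_2)$ is topologically generated by $C$ over $\mathbb{Q}\llbracket h\rrbracket$, we may write $Z_{U_h}(\mathcal{K}) = P_{\mathcal{K}}(C;h)$ for a formal power series $P_{\mathcal{K}}$. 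Because $\iota_{-1}$ fixes $C$ but reverses the sign of $h$ on scalars, $\iota_{-1}(Z_{U_h}(\mathcal{K})) = P_{\mathcal{K}}(C;-h)$. The Casimir acts on $m_0 \in M_h(\lambda)$ as the eigenvalue $c_\lambda(h) := (q^{\lambda+1}+q^{-\lambda-1})/(q-q^{-1})^2$, so $\mathbf{J}_{\overline{\mathcal{K}}}(\lambda,h) = P_{\mathcal{K}}(c_\lambda(h);-h)$, while $\mathbf{J}_{\mathcal{K}}(-\lambda,-h) = P_{\mathcal{K}}(c_{-\lambda}(-h);-h)$ by the definition of $\mathbf{J}_{\mathcal{K}}$.

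The main obstacle is the final matching of these two scalars, which reduces to the central-character identity $c_\lambda(h) = c_{-\lambda}(-h)$. This can be organized using the Weyl-dot symmetry $c_\mu(h) = c_{-\mu-2}(h)$ together with the manifest parity $c_\mu(h) = c_\mu(-h)$; careful bookkeeping of the $\rho$-shift implicit in the Harish--Chandra identification is the delicate point. A conceptually cleaner alternative that sidesteps this issue is to build an intertwiner at the Verma level: the twisted action $x \cdot_{\mathrm{new}} m := \iota_{-1}(x)\,m$ turns $m_0 \in M_h(\lambda)$ into a vector annihilated by $F$ (rather than by $E$) of new $H$-weight $-\lambda$, so that the twisted module $\iota_{-1}^{*} M_h(\lambda)$ may be identified with a dual-Verma module on which $Z_{U_h}(\mathcal{K})$ acts directly by the scalar $\mathbf{J}_{\mathcal{K}}(-\lambda,-h)$.
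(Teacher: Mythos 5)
Your setup is sound as far as it goes: Proposition~\ref{pro:iotaminus} does reduce the claim to evaluating $\iota_{-1}(Z_{U_h}(\mathcal{K}))$ on $m_0$, your verification that $\iota_{-1}$ fixes the quantum Casimir is correct, and (modulo the standard care that $C$ as written has a pole at $h=0$, so one should take $(q-q^{-1})^2C$ or a similar renormalization as the topological generator of the centre) the factorization $Z_{U_h}(\mathcal{K})=P_{\mathcal{K}}(C;h)$ is legitimate by Theorem~\ref{thm:knotcenter}. The gap is exactly at the point you flag, and it is not a matter of bookkeeping: the identity $c_\lambda(h)=c_{-\lambda}(-h)$ that your argument needs is \emph{false}. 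Writing $c_\mu(h)=\tfrac{q^{\mu+1}+q^{-\mu-1}}{(q-q^{-1})^2}=\tfrac{2\cosh(h(\mu+1))}{4\sinh^2(h)}$, parity gives $c_{-\lambda}(-h)=c_{-\lambda}(h)$ and the Weyl-dot symmetry gives $c_{-\lambda}(h)=c_{\lambda-2}(h)$, so the two tools you invoke combine to produce $c_{-\lambda}(-h)=c_{\lambda-2}(h)$, the $\rho$-shifted eigenvalue, not $c_\lambda(h)$. Consequently your computation establishes $\mathbf{J}_{\overline{\mathcal{K}}}(\lambda,h)=P_{\mathcal{K}}(c_\lambda(h);-h)=\mathbf{J}_{\mathcal{K}}(-\lambda-2,-h)$, which differs from the stated target $\mathbf{J}_{\mathcal{K}}(-\lambda,-h)$ by a shift of the colour by $2$, and no amount of re-applying the same two symmetries removes that shift. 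Your ``cleaner alternative'' hits the same wall: the pullback $\iota_{-1}^{*}M_h(\lambda)$ is a lowest-weight module with lowest weight $-\lambda$, and such a module carries the central character of the highest-weight Verma of highest weight $\lambda$ (the $\rho$-shift again), so identifying it with a module on which $Z_{U_h}(\mathcal{K})$ acts by $\mathbf{J}_{\mathcal{K}}(-\lambda,-h)$ requires precisely the false identity above.

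This is also why the paper does not argue inside $U_h(\mathfrak{sl}_2)$ at all. Its proof works in $\mathfrak{D}_{\mathfrak{sl}_2}$ (via $\mathbb{D}$), whose centre has a \emph{second} generator coming from $c=-(H+\widetilde{H})/2$, equivalently $\mathbf{T}$, and uses the modules $\widetilde{M}_h(\lambda,\mu)$ in which the eigenvalue $\mu$ of $c$ is a free parameter not tied to $\lambda$. Choosing $\mu=\lambda$ on one side and $\mu=-\lambda$ on the other, Proposition~\ref{pro:phiwphit} matches the eigenvalues of both central generators $\phi(\mathbf{W})$ and $\phi(\mathbf{T})$ exactly, with no residual shift, leaving only the substitution $h\mapsto-h$ in the scalar coefficients. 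That extra central degree of freedom is doing real work that the single Casimir of $U_h(\mathfrak{sl}_2)$ cannot do; to repair your argument you would either have to import it, or prove the shifted identity and then separately establish a Weyl symmetry for $\mathbf{J}_{\mathcal{K}}$ strong enough to absorb the discrepancy --- neither of which is supplied in the proposal as written.
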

\begin{proof}
  Let $m_0 \in \widetilde{M}_h(\lambda,\sigma \lambda)$ be the highest weight vector. Recall that $\mathbf{J}_{\mathcal{K}}(\lambda,h)m_0= \phi(Z_{\mathbb{D}}(\mathcal{K})) m_0$ conform Proposition \ref{pro:iotaminus}. Moreover, $(\iota_{-1} \circ \phi)(Z_{\mathbb{D}}(\mathcal{K})) m_0 = \mathbf{J}_{\overline{\mathcal{K}}}(\lambda,h)m_0$. The universal invariant $Z_{\mathbb{D}}(\mathcal{K})$ is generated by central elements $\mathbf{T}, \mathbf{W} \in \mathbb{D}$. From Proposition~\ref{pro:phiwphit} it is clear that the eigenvalues of $\phi(\mathbf{W})$ and $\phi(\mathbf{T})$ on a vector $m_0 \in \widetilde{M}_h(\lambda,\lambda)$ are equal to the eigenvalues of $(\iota_{-1} \circ \phi)(\mathbf{W})$ and $(\iota_{-1} \circ \phi)(\mathbf{T})$ on a vector $m_0' \in \widetilde{M}_h(\lambda,-\lambda)$. Since $\iota_{-1}(h) = -h$, we conclude that
 $\mathbf{J}_{\overline{\mathcal{K}}}(\lambda,h) = \mathbf{J}_{\mathcal{K}}(-\lambda,-h)$.
\end{proof}

In \cite{bar2021perturbed}, a relation between the knot polynomials in Theorem~\ref{thm:rozoverbay} and Theorem~\ref{thm:barnatanveen} was conjectured. In the following theorem, we prove this relation for the first-order polynomials.

\begin{theorem}
The polynomials $\rho_{1,0}^{\mathcal{K}}$ and $P^{\mathcal{K}}_1$ are equal.
\end{theorem}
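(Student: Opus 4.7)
The plan is to compare two expansions of the colored Jones function $\mathbf{J}_{\mathcal{K}}(\lambda,h)$ in the formal variable $h$: the one coming from the large-color expansion (Theorem~\ref{thm:rozoverbay}) and the one obtained by applying $\phi$ to the Bar-Natan--Van der Veen expansion (Theorem~\ref{thm:barnatanveen}). Matching the coefficient of $h^1$ on the two sides, and then inserting the already-known values of $\rho_{1,1}^{\mathcal{K}}$ and $\rho_{1,2}^{\mathcal{K}}$, will isolate $\rho_{1,0}^{\mathcal{K}}$ and identify it with $P_1^{\mathcal{K}}$.

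First I would reduce the BNV side. Applying $\phi$ to the identity of Theorem~\ref{thm:barnatanveen} and evaluating on the highest weight vector $m_0 \in \widetilde{M}_h(\lambda,-\lambda)$ (the choice $\sigma=-1$ so that the natural BNV variable $t := q^{2\lambda}$ agrees with the LCE variable $q^{2n}$ up to a factor of $q^2$), Corollary~\ref{cor:JphiD} yields $\mathbf{J}_{\mathcal{K}}(\lambda,h)$. The eigenvalues in Proposition~\ref{pro:phiwphit} combined with the rescaling $h \mapsto 2h\epsilon^{-1}$ built into the isomorphism of Proposition~\ref{pro:DDiso} cancel every power of $\epsilon$, producing a clean series in $h$ and $t$. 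Expanding the exponential and collecting the coefficient of $h^1$ leaves only the three terms $(k,j)=(1,0),(1,1),(1,2)$; substituting $\rho_{1,1}^{\mathcal{K}}(T) = \tfrac{2T}{1-T}\Delta_{\mathcal{K}}'(T)$ and $\rho_{1,2}^{\mathcal{K}}=0$ simplifies the BNV $h^1$-coefficient to $\tfrac{2\rho_{1,0}^{\mathcal{K}}(t)}{\Delta_{\mathcal{K}}^3(t)} - \tfrac{2t\Delta_{\mathcal{K}}'(t)}{\Delta_{\mathcal{K}}^2(t)}$.

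For the LCE side I would invoke Proposition~\ref{pro:polyfunction} with $\lambda=n-1$ and $q=e^h$, so that the variable $q^{2n}$ in Theorem~\ref{thm:rozoverbay} becomes $tq^2$. A first-order Taylor expansion of $1/\Delta_{\mathcal{K}}$ around $t$, combined with $q^2-1 = 2h + O(h^2)$, produces the $h^1$-coefficient $\tfrac{2P_1^{\mathcal{K}}(t)}{\Delta_{\mathcal{K}}^3(t)} - \tfrac{2t\Delta_{\mathcal{K}}'(t)}{\Delta_{\mathcal{K}}^2(t)}$. Equating the two $h^1$-coefficients forces $P_1^{\mathcal{K}}(t) = \rho_{1,0}^{\mathcal{K}}(t)$. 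The main subtlety is the mismatch between the natural color variables $q^{2n}$ and $q^{2(n-1)}$ on the two sides; the $\Delta_{\mathcal{K}}'$ correction it generates on the LCE side must be matched exactly by the $\rho_{1,1}^{\mathcal{K}}$ contribution on the BNV side, and it is this nontrivial cancellation that drives the identification.
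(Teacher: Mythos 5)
Your proposal is correct and follows essentially the same route as the paper: apply $\phi$ to the $\epsilon$-expansion of $\mathbf{Z}_{\mathbb{D}}(\mathcal{K})$, act on the highest weight vector of $\widetilde{M}_h(\lambda,-\lambda)$ via Corollary~\ref{cor:JphiD} and Proposition~\ref{pro:phiwphit}, substitute the known values of $\rho_{1,1}^{\mathcal{K}}$ and $\rho_{1,2}^{\mathcal{K}}$, and match the $h^1$-coefficient against Theorem~\ref{thm:rozoverbay} after accounting for the shift between $q^{2\lambda}$ and $q^{2n}$. The only cosmetic difference is that you Taylor-shift the variable on the large-color-expansion side whereas the paper shifts on the Bar-Natan--Van der Veen side; the key cancellation of the $\Delta_{\mathcal{K}}'$ terms is identified correctly in both.
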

\begin{proof}
To show the equality, we will combine Corollary~\ref{cor:JphiD} and Proposition~\ref{pro:phiwphit}. Let us first write $\mathbf{Z}_{\mathbb{D}}(\mathcal{K})$ into a more convenient form:
  \[
    \mathbf{Z}_{\mathbb{D}}(\mathcal{K})=\frac{1}{\Delta_{\mathcal{K}}} \sum_{k=0}^{\infty} h^{k} \epsilon^{k} \sum_{j=0}^{2 k} \tilde{\rho}^{\mathcal{K}}_{k, j} \frac{h^{j}\mathbf{W}^{j}}{\Delta_{\mathcal{K}}^{2 k-j}}.
    \]
    Note that $\tilde{\rho}_{0,0}^{\mathcal{K}}= 1$, $\rho_{1,0}^{\mathcal{K}} = \tilde{\rho}_{1,0}^{\mathcal{K}}$ and $\rho_{1,1}^{\mathcal{K}} = \tilde{\rho}_{1,1}$. Let $m_0 \in \widetilde{M}_h(\lambda, \sigma \lambda)$ be the highest weight vector, with $\sigma \in \{\pm 1\}$. Also set $T = q^{-2 \sigma \lambda}$. Using Proposition~\ref{pro:phiwphit}, we find
    \begin{align*}
      \mathbf{J}_{\mathcal{K}}(\lambda,e^h)m_0  &= \phi(Z_{\mathbb{D}}(\mathcal{K}))m_0 \\
                                             &= \frac{1}{\Delta_{\mathcal{K}}(T)} \sum_{k=0}^{\infty} (2h)^{k} \sum_{j=0}^{2 k} \tilde{\rho}^{\mathcal{K}}_{k, j}(T) \frac{(2h)^{j}\epsilon^{-j}\phi(\mathbf{W})^{j}}{\Delta_{\mathcal{K}}^{2 k-j}(T)} m_0 \\
      &= \left(\frac{1}{\Delta_{\mathcal{K}}(T)} + \frac{2(\rho_{1,0}(T) + T \sigma  \Delta_{\mathcal{K}}(T) \Delta'_{K})}{\Delta^3_{\mathcal{K}}(T)}h + \dots \right) m_0.
    \end{align*}
    In the last line, the dots represent higher orders in $h$ while keeping $q^{-2 \sigma \lambda}$ fixed. We now set $\sigma =-1$ and $\lambda = n-1$. Using the fact that
    \[
      \frac{1}{\Delta_{\mathcal{K}}(q^{2n -2})} = \frac{1}{\Delta_{\mathcal{K}}(q^{2n})} + \frac{2 q^{2n} \Delta'_{\mathcal{K}}(q^{2n})}{\Delta(q^{2n})^2} h + \dots,
      \]
      we now find
    \begin{align*}
      J_{\mathcal{K}}^n(e^h)m_0 &= \mathbf{J}_{\mathcal{K}}(n-1,e^h)m_0 \\
       &=\left(\frac{1}{\Delta_{\mathcal{K}}(q^{2n})} + \frac{2\rho_{1,0}(q^{2n})}{\Delta^3_{\mathcal{K}}(q^{2n})}h + \dots \right) m_0.
    \end{align*}
Comparing this to Theorem~\ref{thm:rozoverbay} yields the desired expression.
\end{proof}
\section{Experimental verification}
\label{sec:exver}
Bar-Natan and Van der Veen developed a Mathematica program to allow for an efficient computation of the knot invariant $\mathbf{Z}_{\mathbb{D}}$ up to any order in $\epsilon$. This can be found at \url{http://www.rolandvdv.nl/PG/}. We will employ this program alongside Corollary~\ref{cor:JphiD} and Proposition~\ref{pro:phiwphit} to match their expansion in $\epsilon$ with Overbay's computation of the first few orders of the large-color expansion.

Let $m_0 \in \widetilde{M}_h(\lambda,-\lambda)$ be the highest weight vector. Per Proposition~\ref{pro:phiwphit}, we have
\[(
  \tilde{\iota}_1 \circ \phi) (\mathbf{Z}_{\mathbb{D}}(\mathcal{K})) m_0= \mathbf{J}_{\mathcal{K}}(\lambda,h)m_{0}.
\]
   Precisely this identity shall be used to reduce the expansion of $\mathbf{Z}_{\mathbb{D}}(\mathcal{K})$ in terms of $\epsilon$, to the large-color expansion. In the Mathematica program, the output of $\mathbf{Z}_{\mathbb{D}}(\mathcal{K})$ is provided in a ``normal ordered'' form. Thus, we only require the following identities
\begin{align}
  \label{eq:id1}
& (\tilde{\iota}_1 \circ \phi)(\mathbf{x y}) m_0 = 0, \\
  \label{eq:id2}
& (\tilde{\iota}_1 \circ \phi)(\mathbf{a}) m_0 = 0, \\
  \label{eq:id3}
& (\tilde{\iota}_1 \circ \phi)(\mathbf{T}) m_0 = q^{2\lambda} m_0.
\end{align}

Corollary~\ref{cor:JphiD} shall be verified experimentally for the trefoil. Up to second order in $\epsilon$, we find:
\begin{equation*}
\centre{
\centering
\includegraphics[width=\textwidth]{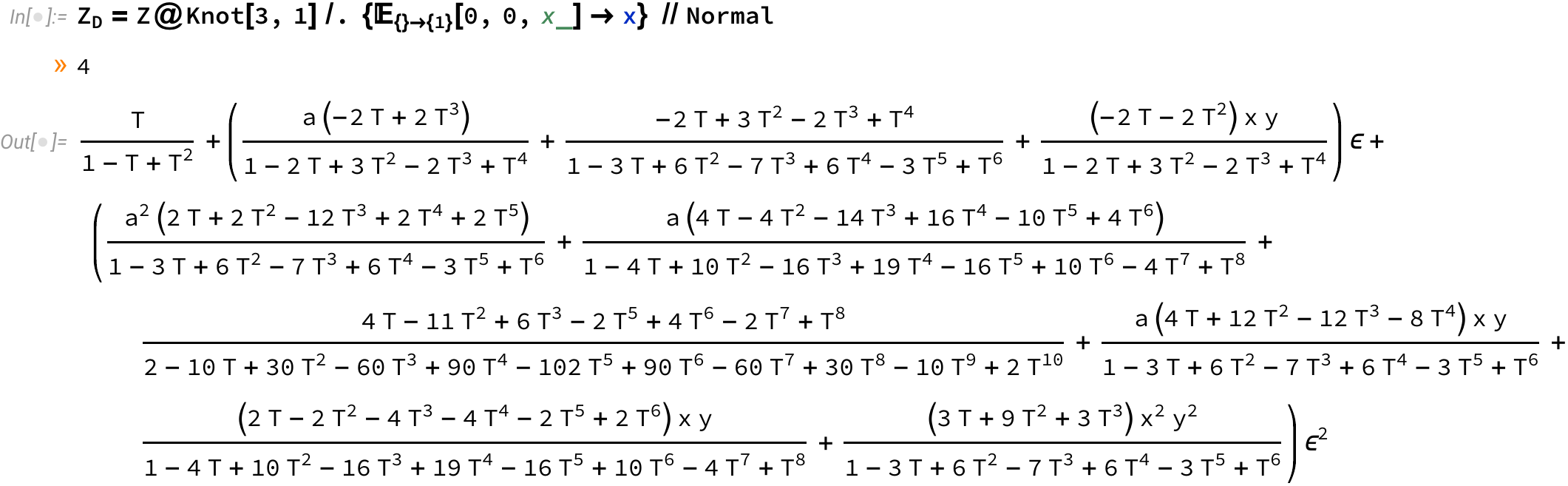}}
\end{equation*}
Now, to compare  the previous expansion colored Jones polynomial, we will set $\mathbf{xy}=0$, $\mathbf{a} =0$ and $\mathbf{T} = T q^2$, consistent with \eqref{eq:id1}, \eqref{eq:id2} and \eqref{eq:id3}. Moreover, note that in the program, the formal parameter $h$ is set to $1$. The map $\phi$ is defined such that $\phi(h) = 2h \varepsilon^{-1}$. To reinsert $h$ back into the expansion, while also applying the map $\phi$, we will set $\epsilon = 2h$.  This leads to:
\begin{equation*}
\centre{
\centering
\includegraphics[width=\textwidth]{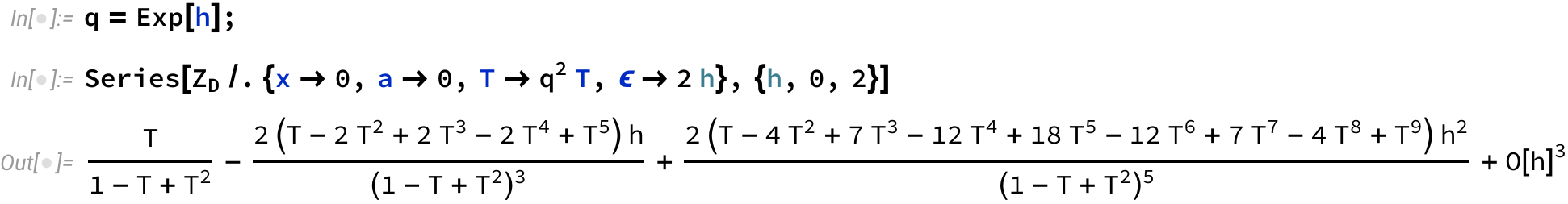}}
\end{equation*}
We can compare this to the result by Overbay \cite{overbay2013perturbative}:
\begin{equation*}
\centre{
\centering
\includegraphics[width=\textwidth]{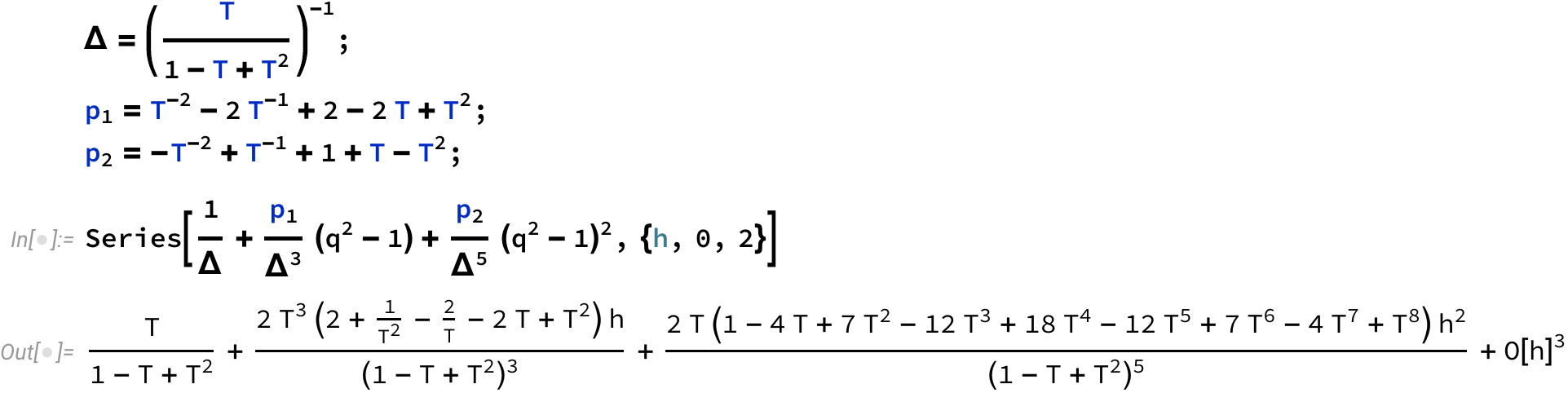}}
\end{equation*}
Notice that both these results match up to a sign difference in the second term. This is due to a difference in convention. Indeed, Overbay uses the mirror image version of the knots that are used in the Mathematica program. With the use of Proposition~\ref{pro:mirrorimage}, this sign difference can be explained.

\appendix


\bibliographystyle{halpha-abbrv}
\bibliography{bibliography}

\end{document}